\documentclass{amsart} 
\usepackage{amssymb,latexsym,amsfonts,verbatim,amscd,ifthen, graphicx} 
\usepackage{color,mathtools,tikz}

\newcommand{\C}{{\mathbb C}} 
\newcommand{\R}{{\mathbb R}} 

\newcommand{\p}{{\mathbb P}} 
 
\numberwithin{equation}{section} 
 
\theoremstyle{plain} 
 
\newtheorem{theorem}[equation]{Theorem} 
\newtheorem{lemma}[equation]{Lemma} 
\newtheorem{proposition}[equation]{Proposition} 
 
\newtheorem{problem}[equation]{Problem} 
\newtheorem*{definition*}{Definition}

\theoremstyle{definition} 
 
\newtheorem{remark}[equation]{Remark} 
\newtheorem{definition}[equation]{Definition} 
 
\DeclareMathOperator{\trace}{Tr}  
\DeclareMathOperator{\cont}{ct} 
\DeclareMathOperator{\sign}{sig}

\begin{document} 
\title 
[Spectra and Coxeter groups] 
{Determinantal hypersurfaces and 
representations of Coxeter groups} 
 
\author[\v{Z}. \v{C}u\v{c}kovi\v{c}]{\v{Z}. \v{C}u\v{c}kovi\v{c}}
\address{
Department of Mathematics and Statistics \\ 
University of Toledo \\ 
Toledo, OH 43606, USA}
\email{zcuckovi@math.utoledo.edu} 

\author[M. I. Stessin]{M. I. Stessin}
\address{
Department of Mathematics and Statistics \\
University at Albany, SUNY \\
Albany, NY 12222, USA
}
\email{mstessin@albany.edu}

\author[A. B. Tchernev]{A. B. Tchernev}
\address{
Department of Mathematics and Statistics \\
University at Albany, SUNY \\
Albany, NY 12222, USA
}
\email{atchernev@albany.edu}

\keywords
{determinantal hypersurfaces, Coxeter groups, 
          joint spectrum, group representations}
\subjclass[2010]
{Primary:  
20F55,  20C33, 
14J70, 47A25; 	
Secondary: 47A67}

\begin{abstract}
Given a finite generating set $T=\{g_0,\dots, g_n\}$ of a group 
$G$, and a representation $\rho$ of $G$ on a Hilbert space $V$, 
we investigate how the geometry of the set \ 
$
D(T,\rho)=\{[x_0:\dots:x_n]\in\C\p^n \mid \sum x_i\rho(g_i) 
\text{ not invertible} \}
$ 
\ reflects the properties of $\rho$. When $V$ is finite-dimensional 
this is an algebraic hypersurface in $\C\p^n$. In the special 
case $T=G$ and $\rho=$ the left regular representation of $G$, 
this hypersurface is 
defined by the \emph{group determinant}, an object 
studied extensively in the founding work of Frobenius that lead to 
the creation of representation theory. 
We focus on the classic case when $G$ is a finite Coxeter group, 
and make $T$ by adding the identity element $1_G$ 
to a Coxeter generating set for $G$. 
Under these assumptions we show 
in our first main result 
that if $\rho$ is the left regular representation, then  
$D(T,\rho)$ determines the isomorphism class of $G$.  
Our second main result is that if $G$ is not of exceptional type, 
and  $\rho$ is any finite dimensional representation, then 
$D(T,\rho)$ determines $\rho$. 
\end{abstract}

\maketitle 
\section{
Introduction}

Given a tuple of linear operators 
$A_0,\dots,A_n$, on a complex $N$-dimensional vector space $V$, 
the determinant
\begin{equation*}\label{determinant}
{\mathcal P}(x_0,\dots,x_n)= \det\left[ x_0A_0+ \dots +x_nA_n \right]
\end{equation*}
is a homogeneous polynomial in $x_0,\dots,x_n$ of degree $N$. 
Zeros of this polynomial form an algebraic closed subscheme of  
complex projective space $\C\p^n$ called a 
\textbf{determinantal hypersurface},  
and we denote it by $\sigma(A_0,\dots,A_n)$. 

Determinantal hypersurfaces are objects with a long history in algebraic 
geometry. 
A major direction of research related to their study 
is to determine which hypersurfaces in $\C\p^n$ are determinantal.   
This is a classical avenue, 
see e.g. \cite{CAT, D, Do, HMV, HV, KV, R, V}.

In this paper we follow another approach to 
determinantal hypersurfaces,  
where one analyzes what the geometry of $\sigma(A_0,\dots, A_n)$ 
can say about the mutual relations between the operators $A_0,\dots,A_n$. 
While this kind of question is natural from the perspective of 
operator theory,  
until recently it seems to have attracted much less 
attention in geometry. 

One of the few relevant instances of work along these lines 
is the result of Motzkin and Taussky \cite{MT} 
that a real curve in $\C\p^2$ having a self-adjoint determinantal 
representation with one of the three operators being invertible 
(and, therefore, assumed to be the identity) satisfies the condition: 
the operators commute if and only if this determinantal curve is a 
union of projective lines (in \cite{MT} the result is stated in 
equivalent but different form).

Another --- and the most relevant to the subject of this paper ---  
instance where the geometry of a determinantal hypersurface 
has been extensively 
studied from our point of view comes from classic work of 
Frobenius that goes back to the origins of representation theory. 
Specifically, when $G=\{g_0,\dots, g_n\}$ is a finite group and 
$A_i$ represents the action of $g_i$ by left multiplication on the 
group ring $\C[G]$ (this is usually called the \emph{left regular 
representation} of $G$), the defining equation 
of $\sigma(A_0,\dots, A_n)$ 
is called the \emph{group determinant} of $G$. The papers of Frobenius 
\cite{Fr1, Fr2, Fr3} show, in modern language, 
that the irreducible components of 
$\sigma(A_0,\dots, A_n)$ are in bijective correspondence with the 
irreducible representations of $G$, and the multiplicity of each 
component 
equals the dimension of the 
corresponding irreducible representation. 
For more on the group determinant and related problems we refer the  
reader to \cite{D2, D3, D4, D5, FoS, HJ1, HJ2, J} and the references 
there. In particular, 
it was shown by Formanek and Sibley~\cite{FoS} that the group 
determinant determines the isomorphism class of the group as well. 

A substantial drawback when working with group determinants is that,  
as the size of the group rises, the resulting increase in complexity 
makes the group determinant essentially inaccessible. It is natural to 
ask if it is possible, instead of all elements, to take only a 
suitable generating set for $G$, thus computing a much simpler 
determinantal hypersurface while still retaining all the salient features 
of the group determinant.  
Furthermore, it is desirable to have analogous results also for 
finitely generated groups that are not necessarily finite. To do 
this one 
would need a generalization for the notion of a determinantal hypersurface
to the setting of operators on an infinite dimensional space.
This naturally leads us  to 
the notion of \textbf{projective joint spectrum} 
for operators acting on 
a Hilbert space $V$, which was introduced by Yang~\cite{Y}. 
If $A_0,\dots,A_n$ are bounded linear operators on $V$, 
their joint spectrum 
is the set  
\begin{equation*}
\sigma(A_0,\dots,A_n)=
\{ 
[x_0:\dots :x_n]\in \C\p^n: \ 
x_0A_0+\dots+x_nA_n \ \text{ is not invertible} \}.
\end{equation*}
When $V$ is finite dimensional, 
the joint spectrum is a determinantal hypersurface. 
Since in this paper we predominantly deal with the 
finite dimensional case, we will use the terms ``determinantal hypersurface" 
and ``joint spectrum" interchangeably.  
The relation between the geometry of the joint spectrum (or its complement, 
the \textbf{joint resolvent set}) and the properties of the tuple 
$A_0,\dots,A_n$ was investigated in 
\cite{BCY, CY, CSZ, DY1, DY2, GS, GY, HWY, ST, SYZ, Y}. 
In \cite{GY} joint spectrum was used in relation to the 
Gelfand-Naimark-Siegal representation of the infinite dihedral group.

Let $T=\{g_0,\dots, g_n\}$ be a set of generators of 
a group $G$, and let $\rho: G\longrightarrow GL(V)$ be a 
homomorphism into the group of bounded invertible linear operators on 
a Hilbert space $V$. Let 
\[
D(T,\rho) = \sigma\bigl(\rho(g_0),\dots,\rho(g_n)\bigr)
\]
be the corresponding joint spectrum. If $\rho$ is the 
left regular representation of $G$ we 
will write just $D(T)$ (when $G$ is infinite its left regular 
representation is obtained by the left action of $G$ on the 
completion of $\C[G]$ with respect to  its inner product norm, 
see Section~\ref{background}). 
The results of Frobenius and of Formanek and Sibley naturally 
prompt us to ask:  
\begin{enumerate} 
\item
When does $D(T)$ determine $G$? 

\item
When does $D(T,\rho)$ determine $\rho$? 

\item
If $\rho$ is finite dimensional and irreducible, when is $D(T,\rho)$ 
reduced and irreducible? 
\end{enumerate}

We address these 
fundamental structural questions in the classical 
case when $G$ is a finitely generated  
Coxeter group. We consider the generating set $T$ 
obtained by adding 
the identity element $1_G$ 
to a Coxeter generating set for $G$; see Section~\ref{background} for   
definitions. Our first main result concerns  
question (1): 

\begin{theorem}\label{T:main-theorem-0}
Let $G$ be a Coxeter group with Coxeter generating set 
$\{g_1,\dots, g_n\}$, and let $T=\{1_G, g_1,\dots, g_n\}$.   
Let $G'$ be a group, and let 
$T'=\{1_{G'}, g_1',\dots,g_n'\}$ be a generating set for $G'$. 
We have:  
\begin{enumerate}
\item  
If $D(T)\supseteq D(T')$ as subsets of\/ 
$\C\p^n$, then there is an 
epimorphism of groups $f : G\longrightarrow G'$ such that 
$f(g_i)=g_i'$ for each $1\le i\le n$. In particular, if $G$ 
is finite then so is $G'$. 

\item
If $G$ is finite and $D(T)=D(T')$ as subschemes of\/ 
$\C\p^n$, then the homomorphism $f$ from $(1)$ is an isomorphism.  
\end{enumerate} 
\end{theorem}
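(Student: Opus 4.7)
The plan is to restrict $D(T)$ and $D(T')$ to coordinate subspaces of $\C\p^n$, thereby reducing the problem to the analysis of one- and two-generator subgroups of $G$ and $G'$. For a subset $J\subseteq\{0,1,\dots,n\}$, set $L_J=\{x_k=0:k\notin J\}$. The starting observation is that the slice $D(T)\cap L_J$ is exactly the joint spectrum of the sub-tuple $\{\rho(g_k):k\in J\}$, and that the restriction of $\rho$ to $H=\langle g_k:k\in J,\,k\ne 0\rangle$ splits (via the coset decomposition $G=\bigsqcup Hg_\alpha$) into copies of the left regular representation of $H$, so this slice coincides set-theoretically with the joint spectrum computed in the left regular representation of $H$ itself. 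The same holds for $D(T')$ with $H'=\langle g_k':k\in J,\,k\ne 0\rangle$.

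For $J=\{0,i\}$: since $\rho(g_i)$ is a unitary involution, $D(T)\cap L_i$ is the two-point set $\{[1:\pm 1]\}$. The hypothesized inclusion forces the spectrum of the unitary $\rho'(g_i')$ into $\{\pm 1\}$, so $\rho'(g_i')^2=I$ by the spectral theorem, and hence $(g_i')^2=1_{G'}$ by faithfulness of the left regular representation. For $J=\{0,i,j\}$ with $m_{ij}<\infty$: the subgroup $H$ is dihedral of order $2m_{ij}$, and a direct computation of its left regular representation shows that $D(T)\cap L_{ij}$ is a union of two or four lines (depending on the parity of $m_{ij}$) together with the irreducible conics
\[
x_0^2-x_i^2-x_j^2-2\cos(2\pi k/m_{ij})\,x_ix_j=0
\]
coming from the two-dimensional irreducible representations of $H$. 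By the previous step $H'=\langle g_i',g_j'\rangle$ is dihedral; were $H'$ infinite, its left regular joint spectrum would contain a continuum of distinct irreducible conics (one for each parameter of the continuous family of two-dimensional representations of the infinite dihedral group), which cannot fit inside the finite union of algebraic curves $D(T)\cap L_{ij}$. Hence $H'$ is finite dihedral of some order $2m'$, and matching the first conic of $H'$ with one of the conics of $H$ yields $\cos(2\pi/m')=\cos(2\pi k/m_{ij})$ for some integer $k$; an elementary argument (with the edge cases $m'\in\{1,2\}$ dispatched by comparing line components) then gives $m'\mid m_{ij}$, so $(g_i'g_j')^{m_{ij}}=1_{G'}$.

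Having verified that $g_1',\dots,g_n'$ satisfy all the relations of the Coxeter presentation of $G$, one obtains a homomorphism $f:G\to G'$ with $f(g_i)=g_i'$; it is surjective because $\{g_i'\}$ generates $G'$, and when $G$ is finite so is $G'$ as a quotient. For part~(2), scheme equality $D(T)=D(T')$ forces the defining polynomials $\det(\sum_i x_i\rho(g_i))$ and $\det(\sum_i x_i\rho'(g_i'))$ to coincide up to a nonzero scalar, so their degrees $|G|$ and $|G'|$ agree and the surjection $f$ becomes a bijection. The main obstacle is the dihedral slice analysis of the second paragraph: eliminating the infinite dihedral scenario and correctly deriving the divisibility $m'\mid m_{ij}$ from the set-theoretic matching of conic components (while carefully handling the low-order cases) is the crux; the remaining steps are standard representation-theoretic decompositions or formal consequences of the universal property of Coxeter groups.
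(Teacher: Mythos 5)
Your strategy is sound and reaches the theorem, but it is genuinely different from the paper's route. You identify each coordinate slice of $D(T)$ (resp.\ $D(T')$) with the joint spectrum of the left regular representation of the corresponding standard subgroup $H=\langle g_i,g_j\rangle$ (resp.\ $H'$), then use the explicit irreducible representations of finite dihedral groups to read off the line and conic components, and finally extract the relation $(g_i'g_j')^{m_{ij}}=1$ from a divisibility argument $m'\mid m_{ij}$. The paper never does any of this: it stays entirely at the operator level in $\C[G]^\vee$ and $\C[G']^\vee$, using Lemma~\ref{dihedral} to see that the ellipse components of $\sigma_p(A_i,A_j)$ are indexed by $\sigma(A_iA_j+A_jA_i)$, and the Chebyshev Lemma~\ref{L:dihedral-exponents} in both directions: $(A_iA_j)^{m_{ij}}=I$ forces that spectrum into $\{2\cos(2\pi k/m_{ij})\}$, and conversely (for unitaries) the inclusion $\sigma_p(A_i',A_j')\subseteq\sigma_p(A_i,A_j)$ then yields $(A_i'A_j')^{m_{ij}}=I$ directly, with faithfulness of the regular representation finishing the job. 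This avoids two ingredients your argument needs: the fact that the standard parabolic $\langle g_i,g_j\rangle$ of a Coxeter group is dihedral of order exactly $2m_{ij}$ (true, but a nontrivial standard theorem you should cite), and any classification of $H'$ or divisibility bookkeeping --- the unitary converse of Lemma~\ref{L:dihedral-exponents} delivers the needed relation in one stroke. Part (2) (degree $=$ group order) is identical in both treatments. Your divisibility computation itself is fine: matching the irreducible conic for $k'=1$ gives $1/m'\equiv\pm k/m_{ij}\pmod 1$, hence $m'\mid m_{ij}$, and the degenerate cases (including $g_i'=1_{G'}$) are indeed killed by comparing line components, though you should run that comparison explicitly since then $H'$ is not ``dihedral of order $2m'$''.

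The one step that is asserted rather than proved is the exclusion of infinite $H'$. Your reason --- ``one conic for each member of the continuous family of two-dimensional irreducibles of $D_\infty$'' --- does not follow from the regular representation containing those irreducibles, because for the infinite dihedral group the two-dimensional irreducibles are only \emph{weakly} contained in the left regular representation, not subrepresentations of it; so the appearance of their conics in $\sigma_p(A_i',A_j')$ needs an argument. It is easy to supply: left multiplication by $g_i'g_j'$ on $\ell^2(H')$ is a direct sum of bilateral shifts, so $\sigma(A_i'A_j'+A_j'A_i')=[-2,2]$, and then Lemma~\ref{dihedral}(1) shows $\sigma_p(A_i',A_j')\cup-\sigma_p(A_i',A_j')$ contains the continuum of ellipses $\mathcal E_\alpha$, $\alpha\in[-2,2]$, which cannot lie in the finite union of curves (and its negative) coming from the slice of $D(T)$; alternatively, amenability of $D_\infty$ identifies the full and reduced C*-norms and gives the spectral inclusion you want. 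With that patch, and a citation for the parabolic subgroup fact, your proof is complete.
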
  

Next, we consider question (2) for finite dimensional representations. 
It is easy to see that, when $V$ is finite dimensional, $D(T,\rho)$ 
is completely determined by the multiset of isomorphism classes 
of irreducible representations 
that appear as factors in a composition series for $V$ when considered 
as a $\C[G]$-module via $\rho$. Thus whenever $\C[G]$ is not a semisimple 
ring, there 
will always exist non-equivalent representations of $G$ having the same 
determinantal hypersurface. Note however, that this kind of obstruction 
does not arise if $G$ is finite, or if one considers either only 
unitary or only irreducible representations. 
 
In our second main result we present a positive answer to question (2) 
when $G$ is a finite Coxeter group 
of regular type. The exceptional types will be treated 
in a separate paper.

\begin{theorem}\label{main-theorem} 
Let $G$ and $T$ be as in Theorem~\ref{T:main-theorem-0}. 
Suppose $G$ is finite 
of type either $I$ (dihedral group), 
or $A$, or $B$, or $D$. 
If for two finite dimensional complex linear 
representations $\rho_1$ and $\rho_2$ of $G$ we have 
\begin{equation}\label{same spectra} 
D(T,\rho_1) = D(T,\rho_2)
\end{equation}
as subschemes of\/ $\C\p^n$, then the representations $\rho_1$ and 
$\rho_2$ are equivalent. 
\end{theorem}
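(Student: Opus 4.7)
The plan is to reduce the theorem to two claims about the defining polynomials of the individual irreducible representations of $G$: irreducibility and pairwise non-proportionality. Since $G$ is finite, Maschke's theorem decomposes each $\rho_j$ as $\rho_j=\bigoplus_{\pi\in\hat G}m_\pi^{(j)}\pi$, with the sum over isomorphism classes $\hat G$ of complex irreducible representations, and the defining polynomial of $D(T,\rho_j)$ factors as
\[
P_j(x_0,\dots,x_n)=\prod_{\pi\in\hat G}P_\pi(x_0,\dots,x_n)^{m_\pi^{(j)}},\quad P_\pi(x)=\det\!\Bigl(x_0 I+\sum_{i=1}^n x_i\pi(g_i)\Bigr).
\]
The scheme-theoretic equality \eqref{same spectra} says that $P_1$ and $P_2$ agree up to a nonzero scalar, so by unique factorization in $\C[x_0,\dots,x_n]$ the theorem reduces to proving: \textbf{(A)} each $P_\pi$ is irreducible as a polynomial; \textbf{(B)} the polynomials $P_\pi$ and $P_{\pi'}$ are non-proportional for non-isomorphic $\pi,\pi'$. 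Given (A) and (B), the multiplicities $m_\pi^{(j)}$ can be read off the prime factorization, so $m_\pi^{(1)}=m_\pi^{(2)}$ for every $\pi$, and $\rho_1\cong\rho_2$.

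For the dihedral case $G=I_2(m)$, with $n=2$, the verification is essentially computational. The one-dimensional irreducibles give pairwise distinct linear forms $x_0+\epsilon_1 x_1+\epsilon_2 x_2$ with $\epsilon_i\in\{\pm 1\}$, and for each two-dimensional irreducible $\pi_k$, realizing $g_1,g_2$ in a standard reflection basis with the Coxeter element $g_1g_2$ acting as rotation by $2\pi k/m$ yields
\[
P_{\pi_k}(x_0,x_1,x_2)=x_0^2-x_1^2-2\cos(2\pi k/m)\,x_1x_2-x_2^2,
\]
a conic with discriminant $\sin^2(2\pi k/m)\ne 0$, hence irreducible. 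Distinct values of $k$ give distinct cosines, so the conics are pairwise distinct, and no product of two linear forms coming from the one-dimensional characters has this form.

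For types $A$, $B$, $D$ the plan is induction on the rank $n$, using the Young seminormal form (and its Hoefsmit analog for types $B$, $D$) to realize each Specht module $S^\lambda$ so that the Coxeter generators act by explicit block-diagonal matrices indexed by adjacent standard (bi)tableaux. The key tool is that the branching rule from $G$ to the parabolic subgroup $G'$ obtained by deleting one Coxeter generator is multiplicity-free in all three types, so restricting $P_\lambda$ to the hyperplane $x_i=0$ produces the determinantal polynomial of $\mathrm{Res}^G_{G'}S^\lambda$, which by induction factors as a product of pairwise distinct irreducible polynomials $P_\mu$ attached to the branching summands. Any putative nontrivial factorization $P_\lambda=QR$ would have to be compatible with this restriction for each choice of deleted generator; playing several such restrictions against each other should force $Q$ or $R$ to be a unit, establishing (A). For (B), the same hyperplane sections recover enough character data of $\pi$ restricted to various parabolic subgroups to separate non-isomorphic irreducibles.

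The main obstacle I anticipate is the induction step for irreducibility in types $B$ and $D$: the branching there introduces bipartition bookkeeping and, in type $D$, splitting of self-conjugate representations, so the matching data across hyperplane sections becomes more intricate. Ruling out an algebraic factor that ``glues'' across different branching summands will require a careful analysis of the Hoefsmit matrices and of the degeneracy loci of the governing $2\times 2$ blocks. If this argument succeeds, it simultaneously settles question (3) from the introduction in the affirmative for all Coxeter groups of types $I$, $A$, $B$, $D$.
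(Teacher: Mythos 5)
Your dihedral computation is fine, and the general reduction scheme (read off multiplicities from the prime factorization of the determinantal polynomial) would indeed prove the theorem \emph{if} your claims (A) and (B) were established. The problem is that for types $A$, $B$, $D$ they are not established: (A) is precisely question (3) from the introduction, which is strictly stronger than Theorem~\ref{main-theorem} and which the paper itself answers only for dihedral groups; your argument for it is a program, not a proof. Concretely, knowing that $P_\lambda$ restricted to a hyperplane $x_i=0$ factors as a product of pairwise distinct irreducibles $P_\mu$ does not by itself preclude a factorization $P_\lambda=QR$: the factorization $Q|_{x_i=0}=\prod_{\mu\in S}P_\mu$, $R|_{x_i=0}=\prod_{\mu\notin S}P_\mu$ is perfectly consistent with every such restriction, and ``playing several restrictions against each other should force $Q$ or $R$ to be a unit'' is exactly the missing idea, not a step you have carried out. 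Moreover your stated input is inaccurate: branching to the parabolic obtained by deleting a Coxeter generator is multiplicity-free only for end-node deletions ($S_n\subset S_{n+1}$, $B_{n-1}\subset B_n$, $D_n\subset D_{n+1}$); deleting an interior node of $A_n$ gives $S_a\times S_b$, where Littlewood--Richardson multiplicities exceed $1$ in general, so several of the hyperplane sections you want to ``play against each other'' do not have the clean factorization you assume. Claim (B) has its own unresolved difficulty in type $D$ (the split pairs interchanged by the diagram automorphism, whose determinantal polynomials differ only by swapping $x_n\leftrightarrow x_{n+1}$), which your sketch defers to ``careful analysis of the Hoefsmit matrices.''

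The paper's proof is structured so as to avoid (A) and (B) altogether. After reducing to unitary representations, it notes that equality of the hypersurfaces forces, for every $m$, equality of the spectra of $(\sum x_i\rho_1(g_i))^m$ and $(\sum x_i\rho_2(g_i))^m$, hence the identities \eqref{traces2}: for each signature $\alpha$ the \emph{sums} of traces over all words with that signature agree. The combinatorial machinery of Sections~\ref{c-length} and~\ref{S:echelon-forms} (admissible transformations preserve conjugacy and are non-increasing for the content order, the Ordering Theorem~\ref{T:ordering-theorem}, and Proposition~\ref{P:echelon-conjugates} handling the $g_n$ versus $g_{n+1}$ ambiguity in type $D$) then lets one peel individual traces out of these sums by induction on the content order, giving $\chi_{\rho_1}=\chi_{\rho_2}$, and Theorem~\ref{character determines representation} finishes the argument. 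If you want to salvage your approach, you must actually prove irreducibility and pairwise distinctness of the $P_\pi$ for types $A$, $B$, $D$ — a result that would be stronger than the theorem and is not available in the paper — or else replace that step by a character-extraction argument of the above kind.
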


In our proofs we make a conscious attempt to emphasize the direct 
connection between porperties of the generating set $T$ 
and the geometry of the corresponding determinantal hypersurface. 
Thus, in the case when $G$ is dihedral, even though there are 
alternative arguments using known results from representation 
theory, we chose to rely on the tools developed    
in the recent paper \cite{ST}, where 
the relation between the appearance of algebraic 
curves of finite multiplicity in the joint spectrum 
of a tuple of operators
and decomposability of the tuple is investigated. 
In the case of unitary self-adjoint operators the 
technique developed there can be applied to finding 
commutation relations between the operators, and, therefore, 
be applied to 
representations of Coxeter groups. 
This method is quite general, and 
proved to be useful also for other, non-Coxeter groups. The upcoming paper \cite{STC} will contain the details.

To handle the cases $A$, $B$, and $D$ in Theorem~\ref{main-theorem}, 
we demonstrate that in those types  
$D(T,\rho)$ encodes the character of the representation $\rho$. 
We accomplish this by leveraging an explicit combinatorial algorithm 
that works uniformly in all three regular types, and 
transforms an element $g$ of $G$ into what we call  
{\bf echelon form} -- an element with good combinatorial properties 
that belongs to the same conjugacy class, and is, in a precise sense, 
no bigger than $g$. The resulting combinatorics seems to be new and 
may be of independent interest. 

The structure of the paper is as follows. Section~\ref{background} 
provides definitions and basic results about group representations and 
Coxeter groups. Section~\ref{S:joint-spectra}
contains the needed background material and 
results regarding joint spectra. 

In Section~\ref{dihedral-group} we 
analyze joint spectra of involution pairs. The computations reveal an 
interesting connection to Tchebyshev's polynomials. 
We use these results and give the proof of 
Theorem~\ref{T:main-theorem-0}. 

In Section~\ref{dihedral2} we work 
with finite dimensional unitary representations of dihedral groups. 
We demonstrate how one can use the tools from \cite{ST} to construct 
explicitly the decomposition of such a representation into irreducible 
factors directly from the defining equation of the joint spectrum.   
We use that to give a proof of Theorem~\ref{main-theorem} for the 
case when $G$ is finite dihedral, and prove an analogous result for 
unitary representations of the infinite dihedral group. We also address question (3) for that setting. Of course, since the irreducible 
representations of dihedral groups are well known, these results 
can also be obtained by direct analysis of the determinantal 
hypersurfaces of these irreducible representations. 

Sections~\ref{c-length} and~\ref{S:echelon-forms} are devoted to 
developing the combinatorics needed for the proof of 
Theorem~\ref{main-theorem} for $G$ of types $A$, $B$, or $D$. 
Section~\ref{c-length} considers words in the alphabet of the 
Coxeter generators of $G$, and introduces a partial ordering 
and \emph{admissible transformations} on these words. We show 
that admissible transformations preserve conjugacy classes, and 
are non-increasing with respect to the partial ordering. 
Section~\ref{S:echelon-forms} introduces echelon forms. The main 
result there is the ``Ordering Theorem'', 
Theorem~\ref{T:ordering-theorem}, which states that every word 
can be transformed into echelon form via a sequence of admissible 
transformations. This result is the key to our proof of 
Theorem~\ref{main-theorem}, which we present      
in Section~\ref{main theorem}. 

The combinatorics of admissible transformations can be extended to 
other Coxeter groups. 
In Section~\ref{affine c} we give an example how 
this can be used to study representations of 
the affine Coxeter group 
$\tilde{C}_2$ via determinantal hypersurfaces.

\section{
Preliminaries
}\label{background}



Let $G$ be group. Recall that a (complex) representation of $G$ 
is a group homomorphism $\rho: G\longrightarrow GL(V)$ from $G$ to 
the group of bounded invertible linear operators on a Hilbert space $V$. 
The representation $\rho$ is called \emph{unitary} 
if $\rho(w)$ is unitary for all $w\in G$; it is called 
\emph{finite dimensional} if $V$ is finite dimensional; and it is 
called \emph{faithful} if $\rho$ is injective. 
Two representations $\rho_1: G\rightarrow GL(V_1)$ and 
$\rho_2: G\rightarrow GL(V_2)$ are \emph{equivalent} 
if there is a bounded linear isomorphism $C: V_1\rightarrow V_2$ 
such that  \ $\rho_2(w)=C\rho_1(w)C^{-1}$ for every $w\in G$.

The group ring $\C[G]$ has inner product 
$
\langle
\sum_{g\in G}a_g g, \sum_{g\in G}b_g g
\rangle 
=
\sum_{g\in G}a_g\bar b_g 
$ 
and corresponding induced norm 
$
\left\Vert
\sum a_g g 
\right\Vert 
=
\sum |a_g|^2. 
$
We write $\C[G]^{\vee}$ for the Hilbert space obtained 
by completing $\C[G]$ with respect to this norm. 
Of course, if $G$ is finite then $\C[G]^\vee=\C[G]$. 
For each $g\in G$ the left multiplication by $g$ on $\C[G]$ 
induces a bounded invertible linear operator $\lambda(g)$ 
on $\C[G]^\vee$, and the resulting map 
$\lambda: G\rightarrow GL\bigl(\C[G]^\vee\bigr)$ is a faithful 
unitary representation of $G$ called the \emph{left regular 
representation}.   

When dealing with representations of finite groups we will rely heavily 
on the following basic fact 
(see \cite{S}). 


\begin{theorem}\label{unitary}
Every linear representation of a finite group $G$ is equivalent to a unitary representation.	
\end{theorem}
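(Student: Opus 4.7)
The plan is to use Weyl's classical averaging trick, adapted to the setting where the representation space $V$ is an arbitrary Hilbert space (the theorem is stated with no dimension restriction, but finiteness of $G$ makes everything go through). Starting from $\rho: G \to GL(V)$, I would produce a new inner product on $V$ that $\rho$ preserves, and then realize this as a genuine equivalence in the sense of the paper.

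First I would define the averaged form
\[
\langle v, w \rangle_G \;=\; \frac{1}{|G|} \sum_{g \in G} \langle \rho(g)v, \rho(g)w \rangle,
\]
where $\langle \cdot, \cdot \rangle$ is the original inner product on $V$. The sum is finite because $G$ is finite, and the $g = 1_G$ term already contributes $\langle v, v \rangle$, so $\langle \cdot, \cdot \rangle_G$ is a positive definite Hermitian form. A direct substitution $v \mapsto \rho(h)v$, $w \mapsto \rho(h)w$ and reindexing the sum by $g \mapsto gh^{-1}$ shows that $\langle \rho(h)v, \rho(h)w\rangle_G = \langle v, w\rangle_G$ for every $h \in G$, so $\rho$ is unitary with respect to $\langle \cdot, \cdot\rangle_G$.

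Next I would convert this to an equivalence in the precise sense defined in the paper. Because $G$ is finite, the norms induced by the two inner products are comparable: $\|v\|_G^2 \le \bigl(\max_{g\in G}\|\rho(g)\|^2\bigr)\|v\|^2$ and analogously with $\rho$ replaced by $\rho^{-1}$, so the identity is a bounded linear isomorphism between $(V,\langle\cdot,\cdot\rangle)$ and $(V,\langle\cdot,\cdot\rangle_G)$. By the Riesz representation theorem there exists a bounded, positive, invertible self-adjoint operator $P$ on $(V,\langle\cdot,\cdot\rangle)$ with $\langle v, w\rangle_G = \langle Pv, w\rangle$. Setting $C = P^{1/2}$ (via the functional calculus) and $\rho'(g) = C \rho(g) C^{-1}$ yields a representation on $(V,\langle\cdot,\cdot\rangle)$ that is unitary with respect to the original inner product, and $C$ witnesses its equivalence with $\rho$.

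I do not expect a real obstacle here: the only subtlety, compared with the textbook finite-dimensional version, is the need to justify boundedness and invertibility of $P$ and to invoke a functional calculus for the square root when $V$ is infinite-dimensional. These are standard once one observes that $P$ is bounded above and below by positive multiples of the identity, which follows from the norm comparison above.
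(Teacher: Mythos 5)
Your argument is correct. The paper itself gives no proof of this statement: it is quoted as a known fact with a citation to Serre's book, where the finite-dimensional case is proved by exactly the averaging ("unitarian trick") you use. So your route coincides with the intended one, with one genuine refinement: the paper's definition of a representation allows an arbitrary Hilbert space $V$ and bounded invertible operators, and your proof actually covers that generality, whereas the textbook version is stated for finite-dimensional $V$. The points you flag are indeed the only ones needing care, and your treatment of them is right: the two-sided norm comparison (the upper bound by $\max_g\Vert\rho(g)\Vert^2$ and the lower bound, e.g.\ from the $g=1_G$ term or from $\Vert v\Vert\le\Vert\rho(g^{-1})\Vert\,\Vert\rho(g)v\Vert$) shows the averaged form is bounded and coercive, so the Riesz operator $P$ satisfies $cI\le P\le CI$ with $c>0$, is therefore invertible with invertible positive square root by the continuous functional calculus, and conjugation by $C=P^{1/2}$ turns the invariance identity $\rho(g)^*P\rho(g)=P$ into unitarity of $C\rho(g)C^{-1}$ with respect to the original inner product, with $C$ witnessing equivalence in the paper's sense. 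No gaps.
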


An important invariant of a finite dimensional 
representation $\rho$ of any group $G$ is the 
\textbf{character $\chi_\rho$ of} $\rho$. This is a function on $G$ 
defined by
$$
\chi_\rho(w)=\trace\bigl(\rho(w)\bigr).
$$
The character of $\rho$ is  a \emph{class-function}, that is, it is 
constant on conjugacy classes. In fact, when $G$ is finite 
it determines completely $\rho$
(see \cite{S}):  


\begin{theorem}\label{character determines representation}
Let $G$	be a finite group, and $\rho_1$ and $\rho_2$ be two 
finite dimensional representations of $G$. 
If $\chi_{\rho_1}=\chi_{\rho_2}$, then $\rho_1$ and $\rho_2$ are equivalent.
\end{theorem}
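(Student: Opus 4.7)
The plan is to reduce the problem to counting multiplicities of irreducibles via the standard Schur orthogonality relations for characters. First, by Theorem~\ref{unitary}, I may assume that both $\rho_1$ and $\rho_2$ are unitary. A standard averaging argument (Maschke's theorem) shows that every unitary representation of a finite group is completely reducible: any invariant subspace $W\subseteq V$ has invariant orthogonal complement $W^\perp$. Iterating, I can write
\[
\rho_j \ \simeq\ \bigoplus_{i} m_i^{(j)}\,\pi_i, \qquad j=1,2,
\]
where $\{\pi_i\}$ runs over a fixed set of representatives for the equivalence classes of irreducible representations of $G$, and the $m_i^{(j)}\ge 0$ are multiplicities. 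Since equivalence of representations is determined by the multiset of isomorphism classes of irreducible summands, it suffices to show $m_i^{(1)} = m_i^{(2)}$ for every~$i$.

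Next, I would set up the inner product on class functions,
\[
\langle \varphi,\psi\rangle_G \ =\ \frac{1}{|G|}\sum_{w\in G} \varphi(w)\,\overline{\psi(w)},
\]
and prove the Schur orthogonality relations: $\langle \chi_{\pi_i}, \chi_{\pi_j}\rangle_G = \delta_{ij}$. The key input is Schur's lemma. Given irreducibles $\pi_i$ on $V_i$ and $\pi_j$ on $V_j$ and any linear map $L:V_i\to V_j$, the averaged map
\[
\tilde L \ =\ \frac{1}{|G|}\sum_{w\in G}\pi_j(w)^{-1}L\,\pi_i(w)
\]
is $G$-equivariant, so it is zero if $i\ne j$ and a scalar multiple of the identity if $i=j$. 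Choosing $L$ to be the rank-one operators given by matrix units in appropriate orthonormal bases and then taking traces produces the orthonormality of matrix coefficients, which upon summing diagonal entries yields $\langle\chi_{\pi_i},\chi_{\pi_j}\rangle_G = \delta_{ij}$.

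Granted Schur orthogonality, the conclusion is immediate: since $\chi_{\rho_j} = \sum_i m_i^{(j)}\chi_{\pi_i}$, one reads off
\[
m_i^{(j)} \ =\ \langle \chi_{\rho_j},\chi_{\pi_i}\rangle_G.
\]
The hypothesis $\chi_{\rho_1}=\chi_{\rho_2}$ then forces $m_i^{(1)}=m_i^{(2)}$ for all $i$, so $\rho_1$ and $\rho_2$ have isomorphic irreducible decompositions and are therefore equivalent. The main technical obstacle is the verification of the Schur orthogonality relations; once those are in hand, the argument is a one-line linear-algebra consequence. Since this is entirely classical (see \cite{S}), the proof in the paper is likely to just quote these facts rather than rederive them.
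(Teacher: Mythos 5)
Your argument is the standard character-orthogonality proof (complete reducibility via unitarity/Maschke, Schur orthogonality, multiplicities as inner products $\langle\chi_{\rho_j},\chi_{\pi_i}\rangle_G$) and it is correct; the paper itself offers no proof, simply quoting the result from \cite{S}, which is exactly the classical argument you reconstruct. As you anticipated, there is nothing in the paper to diverge from, so your proposal matches the intended (cited) approach.
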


%

Next, we review some basics on Coxeter groups. 
The monographs \cite{BB}, \cite{GP}, and \cite{Hu} are good sources for 
their combinatorics, and properties.

A Coxeter group is a finitely generated group $G$ on generators 
$g_1,\dots,g_n$ defined by the following relations:
\[
(g_ig_j)^{m_{ij}}=1, \quad i,j=1,\dots,n,
\]
where $m_{ii}=1$ and $m_{ij}\in {\mathbb N}\cup \{ \infty \}$, 
with $m_{ij}\geq 2$ when $i\neq j$. 
It is easy to see that to avoid redundancy we must have $m_{ij}=m_{ji}$, 
and that $m_{ij}=2$ means $g_i$ and $g_j$ commute. 
The set of generators $\{g_1,\dots, g_n\}$ is called a \emph{Coxeter set 
of generators}, and the $m_{ij}$s are called the \emph{Coxeter exponents}.   

A traditional way of presentation of a Coxeter group is through its  
\emph{Coxeter diagram}, which is a graph constructed by the 
following rules:
\begin{itemize}
\item 
the vertices of the graph are the generator subscripts;

\item 
vertices $i$ and $j$ form an edge if and only if $m_{ij}\geq 3$;

\item 
an edge is labeled with the value $m_{ij}$ whenever this value 
is $4$ or greater.
\end{itemize}
In particular, two generators commute if and only if they are not connected by an edge. The disjoint union of Coxeter diagrams yields a direct product of Coxeter groups, and a Coxeter group is \emph{connected} if its diagram  
is a connected graph. 

The finite connected 
Coxeter groups consist of the one-parameter families $A_n$, 
$B_n$, $D_n$, and  $I(n)$, and the six exceptional groups 
$E_6$, $E_7$, $E_8$, $F_4$, $H_3$, and $H_4$. 
They were classified by Coxeter~\cite{C2}, and in \cite{C1} 
Coxeter proved that every reflection group is a Coxeter group. 

The Coxeter diagrams for the groups $A_n, B_n, D_{n+1}$, and $I(n)$ that 
we study in this paper are as follows:

\vspace{.2cm}

\begin{tikzpicture}[scale=1, vertices/.style={draw, fill=black,
                            circle, inner sep=1pt}]
              \node [label=left:{$\quad A_n:$}]              (8) 
                                                  at (-5.25, .25){};
              \node [vertices, label=below:{$1$}]            (1) 
                                                  at (-4.25, .25){};
              \node [label=above:{$\vphantom{2}$}]           (2) 
                                                  at (-3.50, .25){};
              \node [vertices, label=below:{$2$}]            (3) 
                                                  at (-2.75, .25){};
              \node [label=right:{$\quad\! \ldots$}]         (4) 
                                                  at (-1.25, .25){};
              \node                                          (5) 
                                                  at (  .25, .25){};
	      \node [vertices, label=below:{$n-1$}]          (6) 
                                                  at ( 1.75, .25){};
              \node [vertices, label=below:{$\vphantom{1}n$}](7) 
                                                  at ( 3.25, .25){};

	   \foreach \to/\from in {1/3, 3/4, 5/6, 6/7}
	   \draw [-] (\to)--(\from);
\end{tikzpicture} 

\vspace{.2cm}

\begin{tikzpicture}[scale=1, vertices/.style={draw, fill=black,
                            circle, inner sep=1pt}]
              \node [label=left:{$\quad B_n:$}]              (8) 
                                                  at (-5.25, .25){};
              \node [vertices, label=below:{$1$}]            (1) 
                                                  at (-4.25, .25){};
              \node [label=above:{\small{4}}]              (2) 
                                                  at ( 2.50, .25){};
              \node [vertices, label=below:{$2$}]            (3) 
                                                  at (-2.75, .25){};
              \node [label=right:{$\quad\! \ldots$}]         (4) 
                                                  at (-1.25, .25){};
              \node                                          (5) 
                                                  at (  .25, .25){};
	      \node [vertices, label=below:{$n-1$}]          (6) 
                                                  at ( 1.75, .25){};
              \node [vertices, label=below:{$\vphantom{1}n$}](7) 
                                                  at ( 3.25, .25){};

	   \foreach \to/\from in {1/3, 3/4, 5/6, 6/7}
	   \draw [-] (\to)--(\from);
\end{tikzpicture}

\vspace{.2cm}

\begin{tikzpicture}[scale=1, vertices/.style={draw, fill=black,
                            circle, inner sep=1pt}]
              \node [label=left:{$D_{n+1}:$}]                 (9) 
                                                  at (-5.25, .25){};
              \node [vertices, label=below:{$1$}]            (1) 
                                                  at (-4.25, .25){};
              \node [label=above:{$\vphantom{4}$}]           (2) 
                                                  at ( 2.50, .25){};
              \node [vertices, label=below:{$2$}]            (3) 
                                                  at (-2.75, .25){};
              \node [label=right:{$\quad\! \ldots$}]         (4) 
                                                  at (-1.25, .25){};
              \node                                          (5) 
                                                  at (  .25, .25){};
	      \node [vertices, label=below:{$n-1$}]          (6) 
                                                  at ( 1.75, .25){};
              \node [vertices, label=right:{$\vphantom{1}n$}](7) 
                                                  at ( 3.25, .75){};
              \node [vertices, label=right:{$n+1$}]          (8) 
                                                  at ( 3.25,-.25){}; 
	   \foreach \to/\from in {1/3, 3/4, 5/6, 6/7, 6/8}
	   \draw [-] (\to)--(\from);
\end{tikzpicture} 

\vspace{.2cm}

\begin{tikzpicture}[scale=1, vertices/.style={draw, fill=black,
                            circle, inner sep=1pt}]
              \node [label=left:{$\ \, I(n):$}]              (9) 
                                                  at (-5.25, .25){};
              \node [vertices, label=below:{$1$}]            (1) 
                                                  at (-4.25, .25){};
              \node [label=above:{$n$}]                      (2) 
                                                  at (-3.50, .25){};
              \node [vertices, label=below:{$2$}]            (3) 
                                                  at (-2.75, .25){};
	   \draw [-] (1)--(3);
\end{tikzpicture}


\section{More on joint spectra}\label{S:joint-spectra}
 
Recall that for an algebraic hypersurface in $\C\p^n$ or in $\C^n$ 
defined by
a polynomial $F=F_1^{r_1}\dots F_m^{r_m}$
(with each polynomial $F_i$ irreducible and $F_i$ not associate
with $F_j$ for $i\ne j$),
the \emph{components} of that hypersurface are defined by the
polynomials $F_i^{r_i}$ (thus they are irreducible
but not necessarily reduced), the
\emph{reduced components} are defined by the
polynomials $F_i$, and the exponent $r_i$ is called the 
\emph{multiplicity} of the reduced component defined by $F_i$.
We say that a point on our hypersurface is \emph{regular (with 
multiplicity $r$)} if it belongs to only one reduced component  
(of multiplicity $r$), and  
is a regular point on that reduced component. 

Let $A$ be a bounded linear operator on a Hilbert space 
$V$. We consider its norm given by 
$\left\Vert A\right\Vert=\sup_{|u|=1}|A(u)|$.  
The \emph{spectrum} of $A$ is the set 
\[
\sigma(A)=\{\lambda\in\C\mid A-\lambda I\text{ is not invertible}\}.
\]  
Note that when $A$ is self-adjoint we have 
$\sigma(A)\subseteq\mathbb R$ and 
$\left\Vert A\right\Vert =\sup_{\lambda\in\sigma(A)}|\lambda|$. 
We will use the following elementary consequences of the 
Spectral Theorem and the Spectral Mapping Theorem, see \cite{L}. 

\begin{proposition}\label{P:spectral-facts}
Let $A$ be a bounded linear operator on a Hilbert space $V$. 

(a) 
If $p(x)$ is any polynomial 
then 
$
\sigma\bigl(p(A)\bigr)=p\bigl(\sigma(A)\bigr)= 
\{ p(\lambda) \mid \lambda\in\sigma(A)\}.  
$

(b) 
If $A$ is normal and $\sigma(A)=\{1\}$ then $A=I$, the identity 
operator. 
\end{proposition}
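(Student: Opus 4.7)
The plan is to treat the two parts separately, using standard tools from the spectral theory of bounded operators on Hilbert space.

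For part (a), I would prove the two inclusions of the spectral mapping theorem directly via factorization. Given any $\mu\in\mathbb{C}$, factor
\[
p(x)-\mu = c\prod_{i=1}^d (x-\lambda_i(\mu)),
\]
where $\lambda_1(\mu),\dots,\lambda_d(\mu)$ are the roots of $p(x)-\mu$ (counted with multiplicity) and $c$ is the leading coefficient. Then
\[
p(A)-\mu I = c\prod_{i=1}^d \bigl(A-\lambda_i(\mu)I\bigr),
\]
and since these factors are pairwise commuting, the product is invertible if and only if each factor is. Hence $\mu\in\sigma(p(A))$ if and only if some $\lambda_i(\mu)\in\sigma(A)$, and a root $\lambda_i(\mu)$ of $p(x)-\mu$ lies in $\sigma(A)$ if and only if $\mu=p(\lambda_i(\mu))\in p(\sigma(A))$. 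This yields $\sigma(p(A))=p(\sigma(A))$.

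For part (b), I would use the fact that for a normal operator the operator norm coincides with the spectral radius: $\|N\|=\sup_{\lambda\in\sigma(N)}|\lambda|$. Since $A$ is normal, so is $A-I$ (it commutes with its adjoint $A^*-I$). By part (a) applied to the polynomial $p(x)=x-1$, we have $\sigma(A-I)=\sigma(A)-1=\{0\}$. Hence $\|A-I\|=0$, so $A=I$.

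The only subtlety I anticipate is justifying $\|N\|=r(N)$ for normal $N$, which is the one genuinely nontrivial ingredient; this is standard and follows from the identity $\|N^2\|=\|N\|^2$ for normal operators together with Gelfand's formula for the spectral radius, and is exactly the kind of consequence of the Spectral Theorem that the proposition invokes via the reference to \cite{L}. Alternatively, if one prefers to avoid an explicit norm computation, the continuous functional calculus for normal operators directly identifies $A$ with the multiplication operator by the identity function on $C(\sigma(A))=C(\{1\})=\mathbb{C}$, forcing $A=I$.
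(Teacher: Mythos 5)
Your proof is correct, and it matches the paper's treatment in spirit: the paper offers no argument of its own, simply invoking these statements as elementary consequences of the Spectral Theorem and Spectral Mapping Theorem with a reference to \cite{L}, and what you have written out is exactly the standard proof of those cited facts (polynomial spectral mapping via factorization into commuting factors, and norm equals spectral radius for normal operators applied to $A-I$). The only point worth a passing remark is the trivial case of a constant polynomial in part (a), which your factorization argument tacitly excludes but which holds since $\sigma(A)$ is nonempty.
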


Now let $A_1,A_2,\dots,A_n$  be 
bounded linear operators on a Hilbert space $V$. 
We will be considering the projective joint spectrum 
$\sigma(-I,A_1,\dots,A_n)$ of the tuple  
$-I, A_1, A_2,\dots,A_n$, where $I$ is the identity operator, and 
its intersection with the 
chart $\{[x_0:\dots : x_n] \mid x_0\ne 0\}$. 
By taking $x_0=1$ we identify this intersection with 
a closed subset of 
$\C^n$ called the \emph{proper joint spectrum} of  the tuple 
$A_1,\dots, A_n$, denoted by $\sigma_p(A_1,\dots, A_n)$; and 
when $V$ is finite dimensional 
its defining polynomial is  
$$
\mathcal F(x_1,\dots,x_n)= 
\det(-I +x_1A_1+\dots+x_nA_n). 
$$ 
In particular, a point $x=(x_1,\dots,x_n)\in\C^n$ belongs to the 
proper joint spectrum if and only if 
the operator $A(x)=x_1A_1+\dots+x_nA_n$ has $1$ in its spectrum. 
We note that 
when $V$ is finite dimensional and 
$\sigma_p(A_1,\dots, A_n)$ is not empty  
(this latter is the case, for example, if $A_i$ is invertible for some $i$), 
the projective joint spectrum $\sigma(-I, A_1,\dots, A_n)$ is the 
closure of the determinantal hypersurface 
$\sigma_p(A_1,\dots, A_n)$, and its defining polynomial is 
the homogenization of $\mathcal F(x_1,\dots, x_n)$.  

We will be using 
the following basic observation multiple times:

\begin{proposition}\label{R:multiplicity-eigenvalues}
Suppose $V$ is finite dimensional, 
let $u=(u_1,u_2)$ be a regular point of $\sigma_p(A_1,A_2)$ of 
multiplicity~$r$, and let $\Gamma$ be the corresponding 
unique reduced component containing $u$. 

If the tangent line to $\Gamma$ at $u$ does not pass through the 
origin $(0,0)$ then 
the operator $A(u)=u_1A_1 + u_2A_2$ has eigenvalue $1$ of 
multiplicity exactly $r$.  
\end{proposition}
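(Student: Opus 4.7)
The plan is to analyze the order of vanishing of $\mathcal F$ along the line through the origin and $u$, and relate it on one hand to the algebraic multiplicity of $1$ as an eigenvalue of $A(u)$, and on the other hand to the multiplicity $r$ of the component $\Gamma$ at $u$ together with the tangency hypothesis.

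First I would introduce the auxiliary polynomial $p(t)=\mathcal F(tu_1,tu_2)=\det\bigl(-I+tA(u)\bigr)$ in a single variable $t$. If $\mu_1,\dots,\mu_N$ are the eigenvalues of $A(u)$ listed with algebraic multiplicity, then $p(t)=\prod_{i=1}^{N}(t\mu_i-1)$, so the order of vanishing of $p$ at $t=1$ equals the algebraic multiplicity of $1$ as an eigenvalue of $A(u)$. This reduces the proposition to computing $\operatorname{ord}_{t=1} p(t)$.

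Next I would use the factorization $\mathcal F=F_1^{r_1}\cdots F_m^{r_m}$. By hypothesis $u$ lies on exactly one reduced component, say the one cut out by $F_j$, and with multiplicity $r=r_j$; so $F_i(u)\neq 0$ for $i\neq j$ and $F_j(u)=0$. Therefore
\[
\operatorname{ord}_{t=1}p(t)=r\cdot\operatorname{ord}_{t=1}F_j(tu_1,tu_2),
\]
and it remains to show that the last factor equals $1$, i.e.\ that $F_j(tu_1,tu_2)$ has a simple zero at $t=1$. Since $u$ is a smooth point of $F_j$, this is equivalent to the non-vanishing of the derivative at $t=1$, namely
\[
\frac{d}{dt}\Big|_{t=1}F_j(tu_1,tu_2)=u_1\partial_1 F_j(u)+u_2\partial_2 F_j(u).
\]

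Finally, I would observe that the tangent line to $\Gamma$ at $u$ has equation $\partial_1 F_j(u)(x_1-u_1)+\partial_2 F_j(u)(x_2-u_2)=0$; this line passes through the origin if and only if $u_1\partial_1 F_j(u)+u_2\partial_2 F_j(u)=0$. The hypothesis that the tangent line avoids the origin therefore gives exactly the nonvanishing needed in the previous step, so $\operatorname{ord}_{t=1}F_j(tu)=1$ and hence $\operatorname{ord}_{t=1}p(t)=r$, concluding the proof.

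There is no real obstacle here; the only small subtlety is the book-keeping step that the order of vanishing of $p(t)$ at $t=1$ truly matches the \emph{algebraic} (not merely geometric) multiplicity of the eigenvalue, which is handled automatically by the product formula for $\det\bigl(-I+tA(u)\bigr)$ in terms of eigenvalues counted with algebraic multiplicity.
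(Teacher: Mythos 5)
Your proof is correct and is essentially the same argument as the paper's: the paper evaluates the homogenized determinant $\det(-x_0I+u_1A_1+u_2A_2)=\tilde F^r\tilde H$ as a polynomial in $x_0$ (the characteristic polynomial of $A(u)$) and uses the identity $\tilde F_{x_0}(1,u_1,u_2)=-u_1F_{x_1}(u)-u_2F_{x_2}(u)$ together with the tangent-line hypothesis to see that $x_0=1$ is a simple root of the $\tilde F$-factor. Your restriction $p(t)=\mathcal F(tu_1,tu_2)$ along the line through the origin and $u$ is just the dehomogenized form of that same computation, with the identical factorization and tangent-line step.
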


\begin{proof} 
Indeed, let 
$
\mathcal F(x_1,x_2)=\det(-I + x_1A_1 + x_2 A_2)= 
F^r H
$ 
be the defining polynomial of $\sigma_p(A_1, A_2)\ne \emptyset$, 
where $F$ is a defining irreducible polynomial of $\Gamma$, and 
$H$ is not divisible by $F$. Then 
the defining polynomial 
$
\mathcal P(x_0,x_1,x_2)=
\det(-x_0I + x_1A_1 + x_2A_2)=
\tilde F^r\tilde H
$
of $\sigma(-I, A_1, A_2)$ is the homogenization of $\mathcal F$, 
where $\tilde F$ and $\tilde H$ are the homogenizations of $F$ and $H$. 
Note $\mathcal P$  
is also (as a polynomial in $x_0$) the characteristic polynomial of 
the operator $A(x)$. Thus we have 
$
\mathcal P(x_0,u_1,u_2)=
\tilde F^r(x_0,u_1,u_2)\tilde H(x_0,u_1,u_2) = 
(1-x_0)^m\prod_{i=1}^k (\lambda_i- x_0)^{m_i}
$
where the $\lambda_i$ are the eigenvalues of $A(u)$ different from $1$, 
and the $m_i$ their multiplicities. Since $u$ is only on $\Gamma$, it 
follows that $\tilde H(1,u_1,u_2)=H(u)\ne 0$, hence $(1-x_0)^m$ is a 
factor of $\tilde F^r(x_0,u_1,u_2)$. Thus it suffices to check that 
$x_0=1$ is not a double root of $\tilde F(x_0,u_1,u_2)$ when considered 
as a polynomial in $x_0$. Since for 
the corresponding partial derivative in $x_0$  
we have 
$
\tilde F_{x_0}(1,u_1,u_2)=
- u_1F_{x_1}(u_1,u_2) - u_2F_{x_2}(u_1,u_2),  
$
and the equation for the tangent line to $\Gamma$ at $u$ is 
$(x_1-u_1)F_{x_1}(u) + (x_2-u_2)F_{x_2}(u)=0$, 
the desired conclusion is immedaite from our assumption that the 
tangent line does not contain the origin. 
\end{proof}

It was shown in \cite{ST} that substantial information regarding mutual relations between the $A_i$s is captured by the geometry of their proper   
joint spectrum, and we review some of these results below, 
in the case of two operators $A_1$ and $A_2$.

First we note the behaviour of the joint spectrum under change 
of coordinates.  
Let 
\[ 
{\bf C}= 
\left[ 
\begin{array}{ll} 
   c_{11} & c_{12} \\ 
  c_{21} & c_{22} 
\end{array} 
\right ] 
\] 
be an invertible  complex-valued matrix. Write 
\begin{equation}\label{transform} 
B_1= c_{11}A_1+c_{12}A_2, \quad  B_2=c_{21}A_1+c_{22}A_2. 
\end{equation} 
Then it is immediate from the definitions that  
\begin{equation}\label{spectransform} 
\sigma_p(A_1,A_2)={\bf C}^T \sigma_p(B_1,B_2). 
\end{equation}

Next, suppose $V$ is finite dimensional and $A_1$ is self-adjoint.   
Let $\lambda \neq 0$ be an eigenvalue of $A_1$, 
and  suppose that $(1/\lambda,0)$ is a regular point of 
the determinantal hypersurface $\sigma_p(A_1,A_2)$.  
Let $\mathcal R(x_1,x_2)$ be a defining polynomial 
for the reduced component of $\sigma_p(A_1,A_2)$ that contains 
$(1/\lambda, 0)$; thus 
one of the partial derivatives, say, 
$\mathcal R_{x_1}=\frac{\partial {\mathcal R}}{\partial x_1}$,  
does not vanish at $(1/\lambda,0)$. Then  
the implicit function theorem implies that in a neighborhood of 
$(1/\lambda,0)$ the equation $\{ {\mathcal R}=0 \}$ determines $x_1$ as an implicit analytic function of $x_2$ defined in a neighborhood of the origin:
\begin{equation}\label{implicit}
x_1=x_1(x_2), \quad x_1(0)=\frac{1}{\lambda}.
\end{equation}
For each eigenvalue $w$ in the spectrum $\sigma(A_1)$ of $A_1$ 
let $P_w$ be the 
orthogonal projection onto the $w$-eigensubspace of $A_1$. We also 
set $P=P_\lambda$.  
Since $A_1$ is self-adjoint, the spectral decomposition for $A_1$ is  
\begin{equation}\label{spectral resolution}
A_1\quad =\quad \lambda P\quad + \sum_{w\in\sigma(A_1)\setminus\{\lambda\}} w P_w.
\end{equation}
We also introduce the following operator:  
\begin{equation}\label{t}
T(A_1) \quad = \quad T\quad =
\sum_{w\in\sigma(A_1)\setminus \{ \lambda \}} \frac{\lambda}{w-\lambda}P_w. 
\end{equation}

The following result was proved (in significantly greater generality) 
during the course of \cite[Proof of Theorem 7.3]{ST}.

\begin{theorem}\label{algebraic curve}
Suppose that $V$ is finite dimensional, 
that $A_1$ and $A_2$ are self-adjoint, and that $\lambda\neq 0$ is an 
eigenvalue of $A_1$ 
such that
\begin{itemize}
\item 
$(1/\lambda,0)$	is a regular point of $\sigma_p(A_1,A_2)$; and  

\item 
%
$
\mathcal R_{x_1}(1/\lambda, 0)\neq 0, 
$ 
where ${\mathcal R}(x_1,x_2)$ is a defining polynomial for the 
reduced component of $\sigma_p(A_1,A_2)$ containing $(1/\lambda, 0)$.  
\end{itemize}
Then we have 
\begin{align}
PA_2P     &= -x_1^\prime (0)	P \label{algebraic curve1} \\
\intertext{and}  
PA_2TA_2P &= -\frac{x_1^{\prime \prime}(0)}{2} P, \label{algebraic curve2}
\end{align}
where we regard  $x_1$ as an implicit analytic 
function \eqref{implicit} of $x_2$ defined via the equation 
$\mathcal R=0$.
\end{theorem}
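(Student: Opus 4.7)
The plan is to convert the algebraic--geometric data $\{\mathcal{R}=0\}$ into spectral information about an analytic operator family $B(x_2)$, and then read \eqref{algebraic curve1}--\eqref{algebraic curve2} off as the first- and second-order Fredholm compatibility conditions along this family. The implicit function theorem (available since $\mathcal{R}_{x_1}(1/\lambda,0)\ne 0$) supplies the analytic parametrization $x_1(x_2)$ with $x_1(0)=1/\lambda$; set
\[
B(x_2)=x_1(x_2)A_1+x_2A_2,\qquad C(x_2)=B(x_2)-I.
\]
Because $\mathcal{R}(x_1(x_2),x_2)\equiv 0$, $C(x_2)$ is singular in a neighborhood of $0$, so $1$ is an eigenvalue of $B(x_2)$ there. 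The tangent line to $\{\mathcal{R}=0\}$ at $(1/\lambda,0)$ evaluates to $-(1/\lambda)\mathcal{R}_{x_1}(1/\lambda,0)\ne 0$ at the origin, hence misses it; Proposition~\ref{R:multiplicity-eigenvalues} together with continuity forces the algebraic multiplicity of the eigenvalue $1$ of $B(x_2)$ to remain equal to $\dim\operatorname{Range}(P)$ for all small $x_2$. The isolation of this eigenvalue legitimizes the analytic Riesz spectral projection
\[
P(x_2)=\frac{1}{2\pi i}\oint_{|z-1|=\varepsilon}\bigl(zI-B(x_2)\bigr)^{-1}\,dz,
\]
and for any fixed $v_0\in\operatorname{Range}(P)$ the vector $v(x_2):=P(x_2)v_0$ is analytic with $v(0)=v_0$ and $C(x_2)v(x_2)=0$.

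Next, expand everything in Taylor series at $x_2=0$:
\[
C(x_2)=C_0+x_2C_1+x_2^2C_2+\cdots,\qquad v(x_2)=v_0+x_2v_1+x_2^2v_2+\cdots,
\]
with $C_0=\tfrac{1}{\lambda}A_1-I$, $C_1=x_1'(0)A_1+A_2$, $C_2=\tfrac{x_1''(0)}{2}A_1$. Comparing \eqref{spectral resolution}--\eqref{t} with the spectral decomposition of $C_0$ identifies $T$ as the Moore--Penrose pseudoinverse of $C_0$: it inverts $C_0$ on $\operatorname{Range}(I-P)$ and annihilates $\operatorname{Range}(P)$. Consequently $PC_0=C_0P=0$, $C_0T=TC_0=I-P$, $PT=TP=0$, and $PA_1T=TA_1P=0$ since $A_1$ preserves both $\operatorname{Range}(P)$ and $\operatorname{Range}(I-P)$.

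Matching the $x_2$ coefficient of $C(x_2)v(x_2)=0$ yields $C_0v_1+C_1v_0=0$; applying $P$ and using $PC_0=0$ produces the Fredholm compatibility $PC_1v_0=0$, which, after substituting $C_1=x_1'(0)A_1+A_2$ and using $PA_1v_0=\lambda v_0$, is precisely \eqref{algebraic curve1}. Solving for $v_1$ via the pseudoinverse and exploiting $Tv_0=0$ gives $v_1\equiv -TC_1v_0=-TA_2v_0\pmod{\operatorname{Range}(P)}$.

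Finally, matching the $x_2^2$ coefficient yields $C_0v_2+C_1v_1+C_2v_0=0$, whose compatibility $PC_1v_1+PC_2v_0=0$ is insensitive to the $\operatorname{Range}(P)$--ambiguity in $v_1$ because $PC_1$ already vanishes on $\operatorname{Range}(P)$ by \eqref{algebraic curve1}. Substituting $v_1\equiv -TA_2v_0$ and using $PA_1T=0$ to erase the $A_1$ part of $C_1$ collapses this into an identity relating $PA_2TA_2v_0$ to a scalar multiple of $v_0$ supplied by $PC_2v_0=\tfrac{x_1''(0)}{2}\lambda v_0$, which is exactly \eqref{algebraic curve2}. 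The main obstacle is the first step: one must guarantee that the eigenvalue $1$ stays isolated with constant algebraic multiplicity along the curve, so that the analytic spectral projection (and therefore the analytic family of eigenvectors $v(x_2)$) is well defined. Once that is in place, the remainder is the standard Fredholm/pseudoinverse bookkeeping encoded by $T$.
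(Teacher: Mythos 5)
Your overall strategy---parametrize the component by \eqref{implicit}, expand the eigenvalue equation along the curve, and read \eqref{algebraic curve1}--\eqref{algebraic curve2} off the first- and second-order Fredholm compatibility conditions, with $T$ from \eqref{t} acting as the reduced resolvent (pseudoinverse) of $C_0=\tfrac1\lambda A_1-I$---is the natural one; the paper itself gives no proof of this theorem but quotes it from \cite{ST}. However, two steps do not hold up as written. First, you need $C(x_2)v(x_2)=0$ for $v(x_2)=P(x_2)v_0$, i.e.\ that the range of the Riesz projection consists of genuine eigenvectors of $B(x_2)$ for the eigenvalue $1$. Constancy of the algebraic multiplicity (which you get from Proposition~\ref{R:multiplicity-eigenvalues} applied at the nearby points $(x_1(x_2),x_2)$ of the curve) shows only that $1$ is the unique spectrum of $B(x_2)$ inside the contour and that $\operatorname{Range}P(x_2)$ is the corresponding \emph{generalized} eigenspace; for $x_2\neq0$ the operator $B(x_2)$ need not be normal (for complex $x_2$ it is not self-adjoint), so the eigenvalue could a priori be defective, and then your coefficient matching fails already at order one. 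The gap is fixable: $\det(-I+x_1A_1+x_2A_2)$ has real coefficients, and near the regular point its zero set is the single graph $x_1=x_1(x_2)$, so $x_1(\overline{x_2})=\overline{x_1(x_2)}$; hence $x_1$ is real on the real axis, $B(x_2)$ is self-adjoint there, the eigenvalue $1$ is semisimple, and it suffices to match Taylor coefficients along real $x_2$ (or continue $C(x_2)P(x_2)\equiv 0$ analytically). But this argument, or some substitute, has to be supplied.

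Second, and more seriously, the compatibility conditions do not reduce to the displayed identities. Since $PA_1P=\lambda P$, the first-order condition $PC_1v_0=0$ reads $\lambda x_1'(0)v_0+PA_2v_0=0$, i.e.\ $PA_2P=-\lambda x_1'(0)P$, which agrees with \eqref{algebraic curve1} only when $\lambda=1$; and the second-order condition with $v_1\equiv-TA_2v_0$ gives $-PA_2TA_2v_0+\tfrac{\lambda}{2}x_1''(0)v_0=0$, i.e.\ $PA_2TA_2P=\tfrac{\lambda}{2}x_1''(0)P$, which differs from \eqref{algebraic curve2} by a factor of $-\lambda$. This is not a bookkeeping slip of yours that can be absorbed: take $A_1=\operatorname{diag}(1,-1)$ and $A_2$ the operator interchanging the two basis vectors, so that $\sigma_p(A_1,A_2)$ is the unit circle and $x_1''(0)=-1$; a direct computation gives $PA_2TA_2P=-\tfrac12 P$, while \eqref{algebraic curve2} as displayed would give $+\tfrac12 P$ (and $-\tfrac12 P$ is the value actually used in the proof of Theorem~\ref{circle}). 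Note also that your remark that the $\operatorname{Range}(P)$-ambiguity in $v_1$ is harmless requires $PC_1P=0$, which is exactly the corrected identity $PA_2P=-\lambda x_1'(0)P$, not \eqref{algebraic curve1}. So the assertions that the compatibility conditions are ``precisely'' \eqref{algebraic curve1} and ``exactly'' \eqref{algebraic curve2} are false as stated: you must either display the algebra and reconcile the normalization (there is a genuine factor of $\lambda$ and a sign at issue, which the write-up silently conceals), or explicitly record the discrepancy with the statement being proved.
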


\section{Joint spectra and Coxeter groups}
\label{dihedral-group} 
 
Here we study properties of joint spectra of involution pairs 
$A_1, A_2$, and use these to prove 
Theorem~\ref{T:main-theorem-0}.

\begin{lemma}\label{dihedral} 
Let $A_1$ and $A_2$ be 
bounded linear operators on a Hilbert space $V$ such that 
$A_1^2=A_2^2=I$.  Then: 
\begin{enumerate}
\item 
The set $\sigma_p(A_1,A_2)\cup -\sigma_p(A_1,A_2)$ is the union of 
all the ``complex ellipses" 
$\mathcal E_\alpha= \{x^2+ \alpha xy +y^2=1\}$ with 
$\alpha\in\sigma(A_1A_2 + A_2A_1)$. 

\item
When $\sigma(A_1A_2+A_2A_1)$ is a finite set then each connected 
component of $\sigma_p(A_1,A_2)\setminus \{(\pm 1,0) \ (0,\pm 1) \}$ 
is either $L\setminus \{(\pm 1,0) \ (0,\pm 1) \}$ with $L$ one of the 
lines $x\pm y=\pm 1$, or 
$\mathcal E_\alpha\setminus \{(\pm 1,0) \ (0,\pm 1) \}$ for some 
$\alpha\in\sigma(A_1A_2+A_2A_1)$.
   
\item
When $V$ is finite dimensional 
each reduced component of $\sigma_p(A_1,A_2)$ is either a line
of the form $x\pm y=\pm 1$, or a ``complex ellipse" $\mathcal E_\alpha$ 
with $\alpha\in\sigma(A_1A_2+A_2A_1)\setminus\{-2,2\}$. 
\end{enumerate}
\end{lemma}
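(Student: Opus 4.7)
My plan rests on the identity
\[
T^2 \;=\; (x^2+y^2)\,I \;+\; xy\,C, \qquad\text{where } T:=xA_1+yA_2\text{ and }C:=A_1A_2+A_2A_1,
\]
which is immediate from $A_1^2=A_2^2=I$. For (1) I use that $(x,y)\in\sigma_p$ iff $1\in\sigma(T)$ and $(x,y)\in -\sigma_p$ iff $-1\in\sigma(T)$; combining with the spectral mapping theorem (Proposition~\ref{P:spectral-facts}(a)) gives
\[
(x,y)\in\sigma_p\cup -\sigma_p \iff 1\in\sigma(T^2) \iff 1-(x^2+y^2)\in xy\,\sigma(C).
\]
For $xy\ne 0$ this is exactly $(x,y)\in\mathcal{E}_\alpha$ for some $\alpha\in\sigma(C)$, and the four axis points $(\pm 1,0),(0,\pm 1)$ (where $xy=0$) lie automatically on every $\mathcal{E}_\alpha$.

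For (2), finiteness of $\sigma(C)$ makes $\sigma_p\cup -\sigma_p$ a \emph{finite} union of ellipses. A discriminant check shows $x^2+\alpha xy+y^2-1$ is reducible over $\C$ exactly when $\alpha=\pm 2$, in which case it splits as $(x\pm y-1)(x\pm y+1)$; for $\alpha\ne\pm 2$ it defines an irreducible smooth affine conic. Subtracting defining equations shows that any two distinct irreducible curves in this finite union meet only at the four axis points, so the connected components of $\bigl(\bigcup_\alpha\mathcal{E}_\alpha\bigr)\setminus\{4\text{ pts}\}$ are precisely the individual lines $x\pm y=\pm 1$ minus the two axis points on each, and the ellipses $\mathcal{E}_\alpha$ ($\alpha\ne\pm 2$) minus the four axis points. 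The remaining task is the all-or-nothing claim: $\sigma_p$ either fully contains or fully misses each such irreducible piece. Since $A_1$ and $A_2$ both commute with $C$ (a one-line computation), they preserve each Riesz spectral subspace $V_\alpha$ (available because $\sigma(C)$ is finite), yielding a $T$-invariant decomposition $V=\bigoplus V_\alpha$. For $(x,y)\in\mathcal{E}_\alpha$ with $xy\ne 0$ the identity gives $(T-I)(T+I)=xy(C-\alpha I)$, which is invertible on each $V_\beta$ with $\beta\ne\alpha$; so $\sigma_p$-membership of $(x,y)$ is determined entirely by $T|_{V_\alpha}$, and the claim reduces to showing $xA_1+yA_2\ne\pm I$ on $V_\alpha$ for such $(x,y)$ when $\alpha\ne\pm 2$: an identity of that form would force $A_1|_{V_\alpha}$ and $A_2|_{V_\alpha}$ to be proportional scalars, giving $\alpha=\pm 2$. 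A parallel analysis on $V_{\pm 2}$ handles the line pieces.

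For (3), finite dimensionality makes $\sigma_p$ a genuine algebraic hypersurface cut out by $\mathcal{F}(x,y)=\det(-I+xA_1+yA_2)$, whose reduced irreducible components correspond to the distinct irreducible factors of~$\mathcal{F}$. By (1), each such 1-dimensional irreducible component lies inside the curve $\bigcup_\alpha\mathcal{E}_\alpha$, so it must coincide with one of that union's irreducible components: a line $x\pm y=\pm 1$ (occurring only when $\pm 2\in\sigma(C)$) or an ellipse $\mathcal{E}_\alpha$ for some $\alpha\in\sigma(C)\setminus\{\pm 2\}$, exactly as claimed. The main obstacle is the all-or-nothing step in (2): once the Riesz decomposition reduces the problem to the single subspace $V_\alpha$, the combinatorics of involutions with prescribed anticommutator is routine, but in the non-self-adjoint setting the quasinilpotent correction on $V_\alpha$ demands some care to rule out partial coverage of an ellipse by $\sigma_p$.
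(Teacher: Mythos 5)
Your treatment of (1) is exactly the paper's argument (pass to $T^2-I=xy\,C-(1-x^2-y^2)I$ and apply spectral mapping), and your derivation of (3) is sound: in finite dimensions $\sigma(C)$ is finite, each reduced component of $\sigma_p(A_1,A_2)$ is an irreducible curve inside the finite union $\bigcup_{\alpha\in\sigma(C)}\mathcal E_\alpha$, hence coincides with an irreducible component of some $\mathcal E_\alpha$, i.e.\ with $\mathcal E_\alpha$ itself when $\alpha\neq\pm2$ and with one of the lines $x\pm y=\pm1$ when $\alpha=\pm2$. This is in substance the paper's proof of (3), and, importantly, it does not depend on the ``all-or-nothing'' refinement of (2).

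The genuine gap is in your part (2), precisely at the step you flag at the end. Your reduction to the subspace $V_\alpha$ via the Riesz decomposition for $C$ is correct ($A_1,A_2$ commute with $C$, and $(T-I)(T+I)=xy(C-\alpha I)$ is invertible on every $V_\beta$, $\beta\neq\alpha$), but the remaining claim does \emph{not} reduce to excluding the exact identities $xA_1+yA_2=\pm I$ on $V_\alpha$. If $T-I$ is invertible on $V_\alpha$, then since $(T-I)(T+I)$ restricted to $V_\alpha$ is quasinilpotent and the factors commute, one only concludes $\sigma(T|_{V_\alpha})=\{-1\}$, i.e.\ $T|_{V_\alpha}=-I+Q$ with $Q$ quasinilpotent; your scalar-proportionality argument applies to $T|_{V_\alpha}=\pm I$ exactly and says nothing about this perturbed form, so partial coverage of an irreducible ellipse by $\sigma_p$ is not ruled out by what you wrote (and you acknowledge as much). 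For comparison, the paper's own proof of (2) consists solely of the observation that $(1-x^2-y^2)/(xy)$ is a continuous function into the finite set $\sigma(C)$, hence locally constant on $\sigma_p\setminus\{(\pm1,0),(0,\pm1)\}$, so each connected component lies on a single line or ellipse; that containment is all that is used for (3) and in the later applications. If you want the full ``equals'' statement, one way to close your gap is a second decomposition inside $V_\alpha$: set $u=A_1A_2|_{V_\alpha}$; from $A_1uA_1=u^{-1}$ the spectrum of $u$ is inversion-symmetric, and since $u+u^{-1}$ has spectrum $\{\alpha\}$ with $\alpha\neq\pm2$ one gets $\sigma(u)=\{\mu,\mu^{-1}\}$, $\mu\neq\pm1$; the two Riesz subspaces of $u$ are interchanged by $A_1$, and writing $T=A_1(xI+yu)$ a Schur-complement computation in the resulting block form (with commuting entries) shows both $T-I$ and $T+I$ fail to be invertible at every point of $\mathcal E_\alpha$ with $xy\neq0$. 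In the finite-dimensional case a quicker fix is a parity/trace argument: on $\mathcal E_\alpha$ the integer $\dim\ker(T|_{V_\alpha}-I)^{N}-\dim\ker(T|_{V_\alpha}+I)^{N}$ equals $x\,\mathrm{tr}(A_1|_{V_\alpha})+y\,\mathrm{tr}(A_2|_{V_\alpha})$, which is constant on the connected ellipse only if it vanishes identically, forcing both eigenvalues $\pm1$ to occur.
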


\begin{proof} 
Suppose that $(x,y)\in \sigma_p(A_1,A_2)\cup -\sigma_p(A_1,A_2)$. 
This is if and only if 
 $1\in\sigma(xA_1+yA_2)$ or $-1\in\sigma_p(xA_1+yA_2)$, which,  
by Proposition~\ref{P:spectral-facts}, happens precisely when  
$1\in\sigma\bigl( (xA_1+yA_2)^2\bigr)$. That in turn happens if and only if 
the operator 
\begin{equation}\label{ellipse2}
(xA_1+yA_2)^2 - I = xy[A_1A_2+A_2A_1] - (1-x^2-y^2)I 
\end{equation}
is not invertible. We see that when $xy\ne 0$ this is 
equivalent to  
$\alpha=\frac{1-x^2-y^2}{xy}$ being in the spectrum of $A_1A_2+A_2A_1$, 
and $(x,y)$ being a point on $\mathcal E_\alpha$ (and this last fact is  
true for trivial reasons also when $xy=0$).  This completes the 
proof of part (1). 

When 
$\sigma(A_1A_2+A_2A_1)$ is a finite set 
equation \eqref{ellipse2} implies that $\frac{1-x^2-y^2}{xy}$ 
is constant on every connected component of 
$\sigma_p(A_1,A_2)\setminus \{(\pm 1,0) \ (0,\pm 1) \} $, which yields 
(2). Therefore, if $V$ is finite dimensional,
then each reduced component of $\sigma_p(A_1,A_2)$ is 
a reduced component of an ellipse $\mathcal E_\alpha$  
for some $\alpha\in\sigma(A_1A_2+A_2A_1)$. 
When $\alpha\neq \pm 2$ the ellipse is irreducible, so 
our reduced component coincides with the ellipse. 
When $\alpha=\pm 2$ 
the ellipse is the union of two of the lines $x\pm y=\pm 1$, 
hence our reduced component is one of these lines.   
\end{proof} 

\begin{lemma}\label{L:dihedral-exponents} 
Let $A_1$ and $A_2$ 
be as in Lemma~\ref{dihedral},  
and let $m\ge 2$ be an integer. 
If 
\begin{enumerate}
\item
$\ (A_1A_2)^m=I$  
\end{enumerate}
then 
\begin{enumerate} 
\item[(2)]
$
\ \sigma(A_1A_2 + A_2A_1)\ \subseteq \ 
\{\, 2\cos(2\pi k/m)\ \mid \ k=0,\dots, m-1\, \}$. 
\end{enumerate} 
If both $A_1$ and $A_2$ are unitary, then conditions $(1)$ and $(2)$ 
are equivalent. 
\end{lemma}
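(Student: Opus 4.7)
The plan is to set $U=A_1A_2$. Since $A_i^2=I$, each $A_i$ is its own inverse, so $U$ is invertible with $U^{-1}=A_2A_1$, and therefore $A_1A_2+A_2A_1=U+U^{-1}$. In these terms condition $(1)$ becomes $U^m=I$, while condition $(2)$ becomes $\sigma(U+U^{-1})\subseteq\{2\cos(2\pi k/m)\mid k=0,\dots,m-1\}$.

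For $(1)\Rightarrow(2)$, I would apply the polynomial spectral mapping theorem (Proposition~\ref{P:spectral-facts}(a)) to $x^m$ to get $\sigma(U)^m=\sigma(U^m)=\sigma(I)=\{1\}$, so $\sigma(U)$ consists of $m$-th roots of unity. Under $(1)$ one has $U^{-1}=U^{m-1}$, so $U+U^{-1}=p(U)$ for $p(x)=x+x^{m-1}$, and a second application of spectral mapping gives $\sigma(U+U^{-1})=p(\sigma(U))$. For $\zeta=e^{2\pi ik/m}$ we have $|\zeta|=1$, hence $\zeta^{m-1}=\zeta^{-1}=\bar\zeta$, so $p(\zeta)=\zeta+\bar\zeta=2\cos(2\pi k/m)$, yielding $(2)$.

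For the converse under the hypothesis that $A_1$ and $A_2$ are unitary, each $A_i$ is a self-adjoint unitary involution, so $U$ is unitary and in particular normal, with $U^*=U^{-1}$ and $\sigma(U)$ contained in the unit circle. Invoking the continuous functional calculus for the normal operator $U$ applied to the real-valued function $z\mapsto z+\bar z$, one obtains $\sigma(U+U^{-1})=\sigma(U+U^*)=\{2\mathrm{Re}(\lambda)\mid\lambda\in\sigma(U)\}$. Combined with $(2)$ and $|\lambda|=1$, this forces $\mathrm{Re}(\lambda)=\cos(2\pi k/m)$ for each $\lambda\in\sigma(U)$ and some $k$, and hence $\lambda=e^{\pm 2\pi ik/m}$ is an $m$-th root of unity. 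Applying Proposition~\ref{P:spectral-facts}(a) to $x^m$ one last time yields $\sigma(U^m)=\{1\}$, and since $U^m$ is unitary and hence normal, Proposition~\ref{P:spectral-facts}(b) gives $U^m=I$.

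The main obstacle is the converse direction, where passing from information about $\sigma(U+U^{-1})$ back to information about $\sigma(U)$ relies on the identification $U^{-1}=U^*$ and the consequent normality, which is exactly where unitarity of $A_1$ and $A_2$ enters: absent normality one can have eigenvalues $\lambda,\lambda^{-1}$ of $U$ off the unit circle satisfying $\lambda+\lambda^{-1}=2\cos(2\pi k/m)$, so $\sigma(U+U^{-1})$ could sit inside the prescribed set without $U^m=I$. This dovetails with the fact that $(1)\Rightarrow(2)$ needs only the polynomial spectral mapping, which is available for any bounded operator, whereas the converse needs the full spectral theorem for normal operators.
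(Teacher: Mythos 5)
Your proof is correct, but it takes a genuinely different route from the paper. You work directly with the single operator $U=A_1A_2$, noting $A_1A_2+A_2A_1=U+U^{-1}$: in the forward direction you exploit that under $U^m=I$ the inverse $U^{-1}=U^{m-1}$ is a polynomial in $U$, so the polynomial spectral mapping of Proposition~\ref{P:spectral-facts}(a) alone gives the containment; in the converse you use that $U$ is unitary (so $U^{-1}=U^*$) and invoke the spectral mapping theorem for the continuous functional calculus of a normal operator applied to $z\mapsto z+\bar z$, which pins $\sigma(U)$ to $m$-th roots of unity and then Proposition~\ref{P:spectral-facts}(b) finishes. The paper instead works with the self-adjoint operator $R_1=\tfrac12(A_1A_2+A_2A_1)$ and proves the Chebyshev identity $R_n=T_n(R_1)$ via the three-term recursion; both directions then use only the polynomial spectral mapping plus Proposition~\ref{P:spectral-facts}(b) (the converse passing through $R_m=I$, the quadratic $(A_1A_2)^{2m}-2(A_1A_2)^m+I=0$, and normality of $(A_1A_2)^m$). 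Your argument is shorter and avoids the Chebyshev machinery, at the cost of using a spectral mapping statement for functions of $z$ and $\bar z$ that goes slightly beyond the polynomial version the paper explicitly records (though it is a standard consequence of the Spectral Theorem the paper cites); the paper's route stays strictly within its stated toolkit and makes visible the Chebyshev-polynomial structure that the authors highlight as a feature of the involution-pair computation.
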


\begin{proof} 
For each $n\ge 0$ set $R_n=(1/2)\bigl[ (A_1A_2)^n + (A_2A_1)^n\bigr]$. 
It is straightforward to check that  
\begin{align*}
R_0 &= I,  \\ 
R_1 &= (1/2)(A_1A_2+A_2A_1), \quad\text{and }  \\ 
R_n &= 2R_1 R_{n-1} - R_{n-2} \qquad\text{for } n\ge 2. 
\end{align*}
It follows immediately by induction 
that for each $n\ge 0$ we have 
$$
R_n=T_n(R_1), 
$$
where $T_n(z)$ are Tchebyshev's polynomials of the first kind defined by
\begin{align*}
T_0(z)   &=1, \\
T_1(z)   &=z, \quad\text{and }\\
T_n(z)   &=2z T_{n-1}(z)-T_{n-2}(z) \qquad\text{for } n\ge 2. 
\end{align*}
It is well known that for each real $z\in [-1,1]$ one has 
$T_n(z)=\cos (n \cos^{-1}(z))$, cf. Rivlin~\cite{Ri}, in particular  
the polynomial $T_n(z) - 1$ is of degree $n$ and has for its set of 
roots the set $\{\, \cos(2\pi k/n)\ \mid \ k=0,\dots n-1\, \}$.   

Now, suppose $(A_1A_2)^m=I$. Thus $(A_2A_1)^m=I$ as well, hence 
$R_m=T_m(R_1)=I$. Since by Proposition~\ref{P:spectral-facts}(a) we have 
$\sigma(R_m)=T_m\bigl(\sigma(R_1)\bigr)$, we must have $T_m(\alpha)=1$ 
for each $\alpha\in\sigma(R_1)$. Therefore 
$\sigma(R_1)\subseteq\{\cos(2\pi k/m) \mid  k=0,\dots, m-1 \}$, 
which implies $(2)$ as desired. 

Next, suppose $A_1$ and $A_2$ are both unitary and $(2)$ holds. 
Then $R_n$ is self-adjoint for each $n$. In particular $R_m$ is 
self-adjoint and 
has $\sigma(R_m)=T_m\bigl(\sigma(R_1)\bigr)=\{1\}$.  
It follows from Proposition~\ref{P:spectral-facts}(b) that $R_m=I$. 
Thus $(A_1A_2)^m + (A_1A_2)^{-m}=2I$ hence $(A_1A_2)^{2m}-2(A_1A_2)^m + I=0$. 
Proposition~\ref{P:spectral-facts}(a) now yields that $(\lambda -1)^2=0$ 
for every $\lambda\in\sigma\bigl( (A_1A_2)^m\bigr)$. So 
$(A_1A_2)^m$ is a unitary hence normal operator whose spectrum is the 
singleton $\{1\}$, and  therefore $(1)$ holds by 
Proposition~\ref{P:spectral-facts}(b). 
\end{proof}

\begin{remark} 
The above proofs show that the spectral decomposition of the operator 
$A_1A_2+A_2A_1$ determines
the decomposition of the pair $(A_1,A_2)$ and the decomposition of the joint 
spectrum of $A_1$ and $A_2$ into
irreducible components. The fact that $A_1A_2+A_2A_1$ commutes with both $A_1$ 
and $A_2$, and, therefore,
belongs to the center of the group-algebra, plays a key role here.
\end{remark}

Now we are ready to present the proof of Theorem~\ref{T:main-theorem-0}. 

\begin{proof}[Proof of Theorem~\ref{T:main-theorem-0}]
We have $T=\{1_G,g_1,\dots,g_n\}$ where $\{g_1,\dots, g_n\}$ is 
a system of Coxeter generators of for $G$. Also, we have 
$T'=\{1_{G'}, g_1',\dots,g_n'\}$ where $\{g_1',\dots,g_n' \}$ is a 
generating set for the group $G'$. Let $A_i$ (resp. $A_i'$) 
be the operator on $\C[G]^\vee$ (resp. $\C[G']^\vee$) induced 
via left multiplictaion by $g_i$ 
(resp. $g_i'$) on the group ring $\C[G]$ (resp. $\C[G']$). 
Suppose $D(T)\supseteq D(T')$. 
We also have 
$\sigma_p(A_1,\dots, A_n)\supseteq \sigma_p(A_1',\dots,A_n')$. Furthermore, 
for all $i$ and all $k<l$ 
intersecting with the projective line 
$\{ x_j=0 \mid 0\ne j\ne i\}$, and with the projective plane 
$\bigl\{ x_j=0 \mid j\notin\{0,k,l\}\bigr\}$ 
shows  that 
$\sigma(-I,A_i)\supseteq \sigma(-I,A_i')$ and  
$\sigma(-I,A_i,A_j)\supseteq \sigma(-I,A_i',A_j')$, hence also that
$\sigma_p(A_i,A_j)\supseteq \sigma_p(A_i', A_j')$. 
In particular, for each $i$ we have 
$\{-1,1\}=\sigma(A_i)\supseteq\sigma(A_i')$.  
Since left regular representations are unitary, 
$A_i'$ is normal and hence an involution by
Proposition~\ref{P:spectral-facts}.  

Next, suppose that the Coxeter exponent $m_{ij}$ is finite. Then 
by Lemma~\ref{L:dihedral-exponents} the spectrum of $A_iA_j+A_jA_i$ 
is finite and contained in the set 
$\{ 2\cos(2\pi k/m_{ij}) \mid k=0,\dots,m_{ij}-1 \}$. 
Hence by Lemma~\ref{dihedral} it is 
completely determined by  $\sigma_p(A_i,A_j)$, which is a finite union 
of ellipses. Since $\sigma_p(A_i',A_j')$ 
is contained inside $\sigma_p(A_i,A_j)$, again Lemma~\ref{dihedral}  
tells us that the spectrum of $A_i'A_j'+A_j'A_i'$ is finite
and contained in the spectrum of $A_iA_j + A_jA_i$. Therefore by   
Lemma~\ref{L:dihedral-exponents} we obtain $(A_1'A_2')^{m_{ij}}=I$. 
Since the left regular representation is faithful, the 
generators $g_i'$ of $G'$ satisfy the defining relations on 
the generators $g_i$ of $G$. 

Now suppose $G$ is finite and $D(T)=D(T')$ as subschemes of $\C\p^n$.
But then $|G|=\deg D(T) =\deg D(T')= |G'|$, hence the epimorphism $f$ 
has to be an isomorphism.   
\end{proof}


\section{Representations of dihedral groups}
\label{dihedral2}

We investigate finite dimensional unitary representations of 
dihedral groups. 

The following result was for us the first indication that joint 
spectra can be employed to study Coxeter groups. 
Observe how the proof uses the equation of the circle to explicitly 
construct the desired invariant subspace.    

\begin{theorem}\label{circle}
Let $A_1,A_2$ be self-adjoint linear operators on an 
$N$-dimensional Hilbert space $V$, and suppose that $A_1$ is
invertible and that $\left\Vert A_2\right\Vert =1$.  
Further suppose that the ``complex unit circle" 
$\{(x,y)\in  \C^2: x^2+y^2=1\}$ is a reduced 
component of both $\sigma_p(A_1,A_2)$ and 
$\sigma_p(A_1^{-1},A_2)$, 
 of multiplicity $n$ in $\sigma_p(A_1,A_2)$, 
that the points $(\pm 1,0)$ do not belong 
to any other component of either $\sigma_p(A_1,A_2)$ or 
$\sigma_p(A_1^{-1},A_2)$, and that the points  
$(0,\pm 1)$ do not belong to any other component of 
$\sigma_p(A_1,A_2)$. 
Then: 
\begin{enumerate}
\item
$A_1$ and $A_2$ have a common $2n$-dimensional invariant 
subspace $L$;  

\item
The pair of restrictions  $A_1|_L$ and $A_2|_L$ is
unitary equivalent to the following pair of $2n\times 2n$ involutions
$C_1$ and $C_2$, each block-diagonal with $n$ equal 
$2\times 2$ blocks along the diagonal: 
$$
C_1=
\left[
\begin{array}{ccccc}
                  1 & 0      & \dots  & 0 & 0  \\
                  0 & -1     & \dots  & 0 & 0 \\
             \vdots & \vdots & \ddots & \vdots & \vdots \\
                  0 & 0      & \dots  & 1 & 0\\
                  0 & 0      & \dots  & 0 & -1
\end{array}
\right], \
C_2=
\left[
\begin{array}{ccccc}
                   0 & 1 & \dots & 0 & 0  \\
                  1  & 0 & \dots & 0 & 0 \\
             \vdots & \vdots & \ddots & \vdots & \vdots \\
                  0 & 0 & \dots & 0 & 1 \\
                  0 & 0 & \dots & 1 & 0
\end{array}
\right].
$$

\item
The group generated by $C_1$ and $C_2$ represents
the Coxeter group $B_2$.
\end{enumerate}
\end{theorem}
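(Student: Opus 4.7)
The plan is to use the four special points $(\pm 1, 0)$ and $(0, \pm 1)$ on the complex unit circle component, together with Proposition~\ref{R:multiplicity-eigenvalues} and Theorem~\ref{algebraic curve}, to extract enough information about $A_1$ and $A_2$ to construct $L$ directly. First I would apply Proposition~\ref{R:multiplicity-eigenvalues} at each of the four points: at every one of them the tangent line to the circle is $x = \pm 1$ or $y = \pm 1$ and so misses the origin, and by hypothesis these points lie on no other reduced component of $\sigma_p(A_1, A_2)$. This forces $A_1$ to have eigenvalues $\pm 1$ of multiplicity exactly $n$, and likewise for $A_2$. I set $V_\pm = \ker(A_1 \mp I)$ and $W_\pm = \ker(A_2 \mp I)$, each of dimension $n$.

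The heart of the argument would be a double application of Theorem~\ref{algebraic curve} at $(1, 0)$: first to $(A_1, A_2)$, then to $(A_1^{-1}, A_2)$, both with $\lambda = 1$. The local defining polynomial $\mathcal R(x_1, x_2) = x_1^2 + x_2^2 - 1$ gives $x_1'(0) = 0$ and $x_1''(0) = -1$, so the first application yields $P_+ A_2 P_+ = 0$ (writing $P_+$ for the orthogonal projection onto $V_+$) and an identity involving $T = \sum_{\mu \ne 1} \frac{1}{\mu-1}P_\mu$; the second application --- valid because by hypothesis the circle is also a reduced component of $\sigma_p(A_1^{-1}, A_2)$ through $(1, 0)$ --- gives the analogue with $T' = \sum_{\mu \ne 1} \frac{\mu}{1-\mu}P_\mu$ in place of $T$. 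A direct computation yields $T + T' = P_+ - I$, so adding the two identities and using $P_+ A_2 P_+ = 0$ collapses the left-hand side to $-P_+ A_2^2 P_+$, producing $P_+ A_2^2 P_+ = P_+$. Combined with $\Vert A_2 \Vert \le 1$ (equivalently $I - A_2^2 \ge 0$) and the standard fact that a positive semidefinite operator whose $V_+$-diagonal block vanishes must vanish on all of $V_+$, this upgrades to $A_2^2 v = v$ for every $v \in V_+$. The identical argument at $(-1, 0)$ gives $A_2^2|_{V_-} = I$.

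From here $L$ would be built explicitly. For $v \in V_+$ the identities $A_2^2 v = v$ and $A_2 v \perp V_+$ show that $v + A_2 v \in W_+$ and $v - A_2 v \in W_-$, hence $V_+ \subseteq W_+ \oplus W_-$; the same argument at $(-1, 0)$ gives $V_- \subseteq W_+ \oplus W_-$, and a dimension count forces $L := V_+ \oplus V_- = W_+ \oplus W_-$, a $2n$-dimensional orthogonal direct sum invariant under both $A_1$ and $A_2$. For a unit $v \in V_+$, the vector $w := A_2 v$ satisfies $\Vert w \Vert = 1$, $w \perp V_+$, and $w \in L$, so $w \in V_-$; thus any orthonormal basis $v_1, \dots, v_n$ of $V_+$ produces an orthonormal basis $w_i := A_2 v_i$ of $V_-$, and the ordered basis $v_1, w_1, \dots, v_n, w_n$ of $L$ puts $A_1|_L$ into the form $C_1$ and $A_2|_L$ into the form $C_2$, proving (1) and (2). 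For (3), a direct check gives $C_1^2 = C_2^2 = I$ and shows that $C_1 C_2$ has order exactly $4$ (its $2 \times 2$ block is a rotation by $\pi/2$), so the group generated is the Coxeter group $B_2 = I(4)$.

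The main obstacle will be the derivation of $P_+ A_2^2 P_+ = P_+$: it is the unique step that uses both hypotheses --- the multiplicity-$n$ condition on $\sigma_p(A_1, A_2)$ and the presence of the circle in $\sigma_p(A_1^{-1}, A_2)$ --- in an essential way, and some care is needed both with the signs coming out of Theorem~\ref{algebraic curve} and with the positivity argument promoting the diagonal-block identity to the full operator identity $A_2^2 v = v$ on $V_+$.
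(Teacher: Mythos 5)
Your proposal is correct and follows essentially the same route as the paper's proof: Proposition~\ref{R:multiplicity-eigenvalues} at the four points, then Theorem~\ref{algebraic curve} applied at $(\pm 1,0)$ to both $(A_1,A_2)$ and $(A_1^{-1},A_2)$, combined with $\Vert A_2\Vert =1$ to force $V_\pm$ into the span of the $\pm1$-eigenvectors of $A_2$, yielding $L$ and the block form. Your operator-level summation via $T+T'=P_+-I$ (and the basis $w_i=A_2v_i$) is just a cleaner packaging of the paper's coordinatewise computation, and the sign you arrive at agrees with the identity the paper actually uses in its application of Theorem~\ref{algebraic curve}.
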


\begin{proof}
Since the coordinate axes are not tangent to the unit circle at any  
of the points $(\pm 1,0)$ and $(0,\pm 1)$, and these are regular
points of the unit circle, Proposition~\ref{R:multiplicity-eigenvalues} 
shows that both $1$ and $-1$ are eigenvalues of $A_1$ and of $A_2$ 
with multiplicity $n$.  
Let  $e_1,\dots,e_{n}$ and $e_{n+1},\dots, e_{2n}$ be a pair
of  orthonormal  eigenbases for the eigenspaces of $A_1$
with eigenvalues $1$ and $-1$ respectively, and
$\xi_1,\dots,\xi_{n}$ and $\xi_{n+1},\dots,\xi_{2n}$ be a
similar pair of orthonormal eigenbases for $A_2$.
The spectral decomposition of $A_1$ in our case looks like
$$
A_1=P_1-P_2+\sum_{j=3}^k \lambda_jP_j,
$$
where $P_1$ and $P_2$ are the orthogonal projections on
the spaces $span\{ e_1,\dots,e_{n}\}$ and
$span \{ e_{n+1},\dots,e_{2n}\}$ respectively, 
$\lambda_j$ for $j=3,\dots,k$ are all other eigenvalues of $A_1$, 
and $P_j$ is the orthogonal projection on the 
$\lambda_j$-eigensubspace of $A_1$. In our situation equations 
\eqref{algebraic curve1} and \eqref{algebraic curve2} 
applied to the projection $P_1$ give
\begin{eqnarray}
P_1A_2P_1=0 \label{circle_order_one} \\
P_1A_2TA_2P_1= - \frac{1}{2}P_1 \label{circle_order_2},
\end{eqnarray}
where $T$ is given by \eqref{t}, which applied to our case turns into
$$
T=-\frac{1}{2} P_2+\sum_{j=3}^k\frac{1}{\lambda_j-1}P_j.
$$
Equations \eqref{circle_order_one} and \eqref{circle_order_2} 
imply that for
every $j=1,\dots,n$ we have
\begin{align*}
P_1A_2TA_2e_j &=-\frac{1}{2}e_j, 
\\
-P_1 A_2 
\left( 
\frac{1}{2}P_2A_2e_j + 
\sum_{s=3}^k \frac{1}{1-\lambda_s}P_sA_2e_j 
\right) 
&= -\frac{1}{2}e_j, 
\\
P_1 
\left( 
\frac{1}{2}A_2P_2A_2e_j + 
\sum_{s=3}^k 
\frac{1}{1-\lambda_s}A_2P_sA_2e_j 
\right) 
&= \frac{1}{2}e_j, 
\\
\sum_{l=1}^n 
\left[ 
\frac{1}{2}\langle A_2P_2A_2e_j, e_l\rangle + 
\sum_{s=3}^k 
\frac{1}{1-\lambda_s}\langle A_2P_sA_2e_j,e_l \rangle 
\right] 
e_l 
&= \frac{1}{2}e_j , 
\\
\sum_{l=1}^n 
\left[ 
\frac{1}{2}\langle P_2A_2e_j, A_2e_l\rangle + 
\sum_{s=3}^k 
\frac{1}{1-\lambda_s}\langle P_sA_2e_j,A_2e_l \rangle 
\right] 
e_l
&= \frac{1}{2}e_j, 
\\
\sum_{l=1}^n 
\left[ 
\frac{1}{2}\langle P_2A_2e_j, P_2A_2e_l\rangle + 
\sum_{s=3}^k 
\frac{1}{1-\lambda_s}\langle P_sA_2e_j,P_sA_2e_l \rangle 
\right] 
e_l
&= \frac{1}{2}e_j , 
\end{align*}
The last relation implies
\begin{equation}\label{coordinates}
\frac{1}{2}\langle P_2A_2e_j, P_2A_2e_l\rangle +
\sum_{s=3}^k \frac{1}{1-\lambda_s}\langle P_sA_2e_j,P_sA_2e_l \rangle =
\frac{1}{2}\delta_j^l 
\end{equation}
for all $1\leq j,l\leq n$, 
where $\delta_j^l$ is the Kronecker symbol. Putting $l=j$ in \eqref{coordinates} yields
\begin{equation}\label{squares_for A_1}
\frac{1}{2}\left|P_2A_2e_j\right|^2 +
\sum_{s=3}^k\frac{\left|P_s(A_2e_j)\right|^2}{1-\lambda_s}
=
\frac{1}{2}, \qquad 1 \leq j\leq n.
\end{equation}
Since the spectral resolution for $A_1^{-1}$ is
$$ 
A_1^{-1}=P_1-P_2+ \sum_{j=3}^k \frac{1}{\lambda_j}P_j, 
$$
the operator $T(A_1^{-1})$ is given by
$$ 
T(A_1^{-1})= -\frac{1}{2}P_2 + \sum_{j=3}^k \frac{\lambda_j}{1-\lambda_j}P_j.
$$
Now, equation \eqref{algebraic curve2} applied to
$\sigma_p(A_1^{-1},A_2)$ yields
\begin{equation}\label{circle inverse}
P_1A_2T(A_1^{-1})A_2P_1=-\frac{1}{2}P_1,
\end{equation}
A similar argument applied to \eqref{circle inverse} gives
\begin{equation}\label{squares_for_inverse}
\frac{1}{2}\left| P_2A_2e_j\right|^2 -
\sum_{s=3}^k 
\frac{\lambda_s \left|P_sA_2e_j\right|^2}{1-\lambda_s}=\frac{1}{2}.
\end{equation}
Adding \eqref{squares_for_inverse} and
\eqref{squares_for A_1} we obtain
\begin{equation}\label{projection on complement}
\sum_{s=2}^k  \left| P_sA_2e_j \right| ^2=1
\end{equation}
Now, since \eqref{circle_order_one} means $P_1(A_2e_j)=0$,  
the displayed equation above yields
\begin{equation}\label{norm_one}
\left| A_2e_j\right| =1.
\end{equation}
Since $\Vert A_2\Vert = 1$, it follows from
\eqref{norm_one} that for every $1\leq j\leq n$ we have  
$e_j\in span \{ \xi_1,\dots, \xi_{2n}\}$. 
A similar analysis 
shows that for each
$n + 1\leq j\leq 2n$ we have 
$e_j\in span \{ \xi_1,\dots, \xi_{2n}\}$. 
Thus,
\[ 
L =
span\{ \xi_1,\dots, \xi_{2n}\} =
span\{ e_1,\dots, e_{2n}\}
\] 
is a common invariant subspace of $A_1$ and $A_2$, which 
completes the proof of (1).  

Next, we 
remark that the restrictions of $A_1$ and $A_2$ to $L$ are
involutions, and, since 
$
P_1A_2e_j=0
$
for  $j=1,\dots ,n$ and 
$ 
P_2A_2e_j=0 
$
for $j=n+1,\dots ,2n$,  
that
\begin{eqnarray}
A_2e_j\in span \{ e_{n+1},\dots,e_{2n} \}, \text{ for } j=1,\dots,n; \\ 
A_2e_j\in span \{e_1,\dots,e_n \}, \text{ for } j=n+1,\dots,2n.
\end{eqnarray}
Therefore, in the basis $e_1,\dots,e_{2n}$ the restrictions of 
$A_1$ and $A_2$ to $L$
look like
\begin{equation}
A_1|_L=
\left [ 
\begin{array}{ll}
 I_n & 0_n \\
 0_n & -I_n	
\end{array} 
\right ], 
\qquad 
A_2|_L=
\left [
\begin{array}{ll}
0_n & D_n \\
D_n^* & 0_n	
\end{array} 
\right ],
\end{equation}
where $I_n$ and $0_n$ are the identity and zero $n\times n$ matrices respectively, and $D_n$ is a unitary $n\times n $ matrix.
Write
\[
U=\left[
\begin{array}{cc}
     D_{n}^\ast & 0_{n} \\
         0_{n} & I_{n}
\end{array} 
\right ], 
\]
then $U$ is unitary and
\begin{equation}\label{pauli}
 U \left ( 
\left[
\begin{array}{cc}
          I_{n} & 0_{n} \\
           0_{n} & -I_{n}
\end{array}
\right], 
\left[
\begin{array}{cc}
   0_{n} & D_{n} \\
  D_{n}^\ast & 0_{n}
\end{array}
\right] \right )
U^\ast= \left ( \left[
\begin{array}{cc}
          I_{n} & 0_{n} \\
           0_{n} & -I_{n}
\end{array}
\right],
\left [ 
\begin{array}{cc}
          0_{n} & I_{n} \\
           I_{n} & 0_{n}
\end{array}
\right] \right ).  
\end{equation}
Now, the interchange of coordinate vectors 
$e_{2j} \leftrightarrow e_{2j-1+n}$ 
for $j=1,\dots,\lfloor\frac{n}{2}\rfloor$ finishes the proof of (2).

To prove (3) note that the relations $C_1^2=C_2^2=(C_1C_2)^4=1$  
are straightforward to verify. 
\end{proof}

We now prove a result that builds upon Lemma~\ref{dihedral}.

\begin{theorem}\label{reducing-unitary}
Let $A_1$ and $A_2$ be unitary self-adjoint 
linear operators on a finite-dimensional Hilbert space $V$. 
Then: 
\begin{enumerate}
\item
Every reduced component of  $\sigma_p(A_1,A_2)$ is either a 
line $\{x\pm y=\pm 1\}$ or an ``ellipse'' 
$\{x^2 +2xy\cos(2\pi\theta) + y^2 = 1\}$ for some $0<\theta<1/2$. 

\item 
If a line $\{x\pm y =\pm 1\}$ is a reduced component of multiplicity $r$ 
of the joint spectrum $\sigma_p(A_1,A_2)$ then $A_1$ and $A_2$ have 
a correponding common eigenspace of dimension $r$. 

\item 
If an ``ellipse"  \ 
$
\{ x^2+2xy\cos(2\pi\theta) +y^2 =1 \} 
$
\ with \ $0<\theta <1/2$ \ 
is a reduced component of the proper joint spectrum $\sigma_p(A_1,A_2)$ 
of multiplicity $r$, then $A_1$ and $A_2$ have a correponding 
common invariant subspace of dimension $2r$ 
that is a direct sum of $r$ two-dimensional 
common invariant subspaces.
\end{enumerate}
\end{theorem}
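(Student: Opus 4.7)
The plan is to use a single structural device throughout all three parts: the operator $M:=A_1A_2+A_2A_1$. Since $A_1,A_2$ are self-adjoint, so is $M$; and a direct check shows $M$ commutes with both $A_1$ and $A_2$. Thus the spectral decomposition $V=\bigoplus_{\beta\in\sigma(M)}V_\beta$ produces $(A_1,A_2)$-invariant summands, and the identity
$
(xA_1+yA_2)^2=(x^2+\beta xy+y^2)I
$
on $V_\beta$ tightly couples the eigenvalue $\beta$ of $M$ to the ellipse $\mathcal{E}_\beta$ inside the joint spectrum. Part (1) then falls out of Lemma~\ref{dihedral}(3) together with the observation that $B:=A_1A_2$ is unitary with $B^{-1}=A_2A_1$, so $M=B+B^*$ has spectrum inside $\{2\cos(2\pi\theta):\theta\in[0,1)\}$. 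Excluding the degenerate values $\pm 2$ (which give reducible ``ellipses'' already covered by the line case) and using $\cos(2\pi\theta)=\cos(2\pi(1-\theta))$ to fold the parameter, we may take $0<\theta<1/2$.

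For part (3) with $\alpha=2\cos(2\pi\theta)$ I would argue entirely on $V_\alpha$. The relation $B^2-\alpha B+I=0$ forces $B$ to diagonalize with distinct eigenvalues $e^{\pm 2\pi i\theta}$, yielding $V_\alpha=V_\alpha^+\oplus V_\alpha^-$. Moreover $A_1BA_1=B^{-1}$ shows $A_1$ swaps the two summands, so $\dim V_\alpha^+=\dim V_\alpha^-=:d$. For a nonzero $v\in V_\alpha^+$ the subspace $L_v:=\mathrm{span}\{v,A_1v\}$ is two-dimensional and common invariant: $A_1$-invariance is by construction, and $A_2v=A_1Bv=e^{2\pi i\theta}A_1v\in L_v$, $A_2(A_1v)=e^{-2\pi i\theta}v\in L_v$. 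An orthonormal basis $v_1,\dots,v_d$ of $V_\alpha^+$ then produces an orthogonal decomposition $V_\alpha=\bigoplus_{j=1}^d L_{v_j}$ into two-dimensional common invariant subspaces.

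To match $d$ with $r$, I pick a regular point $(x_0,y_0)$ on $\mathcal{E}_\alpha$ lying on no other reduced component. The Euler-type identity $x_0F_{x}(x_0,y_0)+y_0F_{y}(x_0,y_0)=2(x_0^2+\alpha x_0y_0+y_0^2)=2\ne 0$ shows the tangent to $\mathcal{E}_\alpha$ at $(x_0,y_0)$ misses the origin, so Proposition~\ref{R:multiplicity-eigenvalues} gives $\dim\ker(x_0A_1+y_0A_2-I)=r$. The displayed identity above shows this kernel is entirely inside $V_\alpha$ (for $\beta\ne\alpha$ the scalar $x_0^2+\beta x_0y_0+y_0^2$ differs from $1$). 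A two-by-two calculation on each $L_{v_j}$, where $x_0A_1+y_0A_2$ is off-diagonal with entries $x_0+y_0e^{\pm 2\pi i\theta}$ whose product equals $x_0^2+\alpha x_0y_0+y_0^2=1$, shows each $L_{v_j}$ contributes a one-dimensional $+1$-eigenspace. Hence $d=r$ and $\dim V_\alpha=2r$, completing part (3).

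Part (2) is the degenerate case: a line $\{x\pm y=\pm 1\}$ appears as a reduced component only when the ellipse $\mathcal{E}_{\pm 2}$ splits, i.e., from $V_{\pm 2}$. On $V_2$ the equation $B+B^*=2I$ together with unitarity of $B$ forces $B=I$, so $A_1=A_2$ there; on $V_{-2}$ analogously $A_1=-A_2$. Applying Proposition~\ref{R:multiplicity-eigenvalues} at a regular point on the line (whose tangent is the line itself, manifestly avoiding the origin) then identifies the multiplicity $r$ with the dimension of the relevant $\pm 1$-eigenspace of $A_1$, and the relation $A_1=\pm A_2$ on $V_{\pm 2}$ makes this a common eigenspace. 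The main obstacle in all three parts is converting the analytic multiplicity $r$ of a component of $\sigma_p(A_1,A_2)$ into the algebraic dimension of a concrete common invariant subspace; this is resolved cleanly by the $M$-decomposition, whose compatibility with Proposition~\ref{R:multiplicity-eigenvalues} localizes every multiplicity count to one summand $V_\beta$ at a time.
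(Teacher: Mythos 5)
Your argument is correct, and for parts (2) and (3) it takes a genuinely different and more elementary route than the paper. Part (1) coincides with the paper's proof (both rest on Lemma~\ref{dihedral} and the fact that $A_1A_2+A_2A_1$ is self-adjoint of norm $\le 2$). For parts (2) and (3), however, the paper does not use the spectral decomposition of $M=A_1A_2+A_2A_1$ directly: part (2) is proved there by a norm/convexity argument on the positive quadrant together with results quoted from \cite{CSZ}, and part (3) by passing to rotated operators $B_1,B_2$ built from $A_1\pm A_2$, verifying the elaborate hypotheses of Theorem~\ref{circle} (itself resting on the implicit-function machinery of \cite{ST} via Theorem~\ref{algebraic curve}), and then iteratively peeling off the eigenspaces $M_1, M_2,\dots$. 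Your route instead exploits that $M$ lies in the commutant of $\{A_1,A_2\}$, restricts to one eigenspace $V_\alpha$ at a time, diagonalizes the unitary $B=A_1A_2$ there (eigenvalues $e^{\pm 2\pi i\theta}$, interchanged by $A_1$ since $A_1BA_1=B^{-1}$), and builds the two-dimensional invariant subspaces explicitly; the multiplicity match $d=r$ comes from Proposition~\ref{R:multiplicity-eigenvalues} at a generic point, and $V_{\pm 2}$ handles the line case since $B=\pm I$ there forces $A_1=\pm A_2$. This is shorter, self-contained (no appeal to \cite{CSZ} or to Theorem~\ref{circle}), and still produces the explicit $2\times 2$ restrictions needed downstream; what the paper's longer route buys is a demonstration of the \cite{ST} joint-spectrum technique, which the authors state is a deliberate goal. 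Two small points to make explicit in a write-up: choose the regular point on the component with $x_0y_0\neq 0$ (so that $x_0^2+\beta x_0y_0+y_0^2\ne 1$ for $\beta\ne\alpha$ and the eigenvalue-$1$ count indeed localizes to $V_\alpha$, resp.\ $V_{\pm 2}$), and either take the point real or argue with algebraic multiplicities, since Proposition~\ref{R:multiplicity-eigenvalues} as stated controls the algebraic multiplicity of the eigenvalue $1$ rather than the kernel dimension; on $V_\alpha$ the restriction squares to the identity, so the two notions agree there.
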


\begin{proof}
Since $A_1$ and $A_2$ are unitary involutions, the operator 
$R =A_1A_2+A_2A_1$ is self-adjoint of norm $\le 2$. Part (1) now 
is an immediate consequence of Lemma~\ref{dihedral}. 

We give the proof of (2) for the case when our line is  
$\{x+y=1\}$. The proof of the other cases is analogous. 

Choose a point $(\gamma, \beta)\in \{ x+y=1 \}$ with positive 
real coordinates. 
The spectrum of the operator $\gamma A_1+\beta A_2$ is the set 
$
\{ \lambda \in \R : \ (1/\lambda) (\gamma, \beta)\in \sigma_p (A_1,A_2)\}.
$ 
Since by part (1)  all nonlinear reduced components of 
$\sigma_p(A_1,A_2)$ are convex inside the positive real quadrant 
and intersect $\{x+y=1\}$ at $(1,0)$ and $(0,1)$,   
we have that 
$\max \{ |\lambda |: (1/\lambda) (\gamma, \beta) \in \sigma_p (A_1,A_2)\}$ corresponds to the point 
of the intersection of the lines 
$\{ x+y=1\}$ and $\{ \beta x-\gamma y=0\}$. Thus  
$
\max\{ |\lambda |: (1/\lambda)(\gamma, \beta) \in 
\sigma_p (A_1,A_2)\} = 1/(\gamma +\beta)=1.
$
Since the operator $\gamma A_1+\beta A_2$ is self-adjoint, this implies 
$\left\Vert\gamma A_1+\beta A_2\right\Vert = 1$. Also, this point of 
intersection is a regular point of $\sigma_p(A_1,A_2)$. 
By~\eqref{spectransform} 
the joint spectrum of $\gamma A_1+\beta A_2$ and $A_2$ 
also contains the line $\{x+y=1\}$ (with the same multiplicity $r$), and 
$(1,0)$ 
is a regular point of 
$\sigma_p(\gamma A_1+\beta A_2, A_2)$. Since 
$\left\Vert \gamma A_1+\beta A_2 \right\Vert =\left\Vert A_2\right\Vert =1$, 
by \cite[Proofs of Lemma 5 and Lemma 9]{CSZ} 
we have that $A_2$ and $\gamma A_1+\beta A_2$ have a common 
eigensubspace  corresponding to the eigenvalue $1$. 
By Proposition~\ref{R:multiplicity-eigenvalues} this eigenspace 
has dimension equal to the multiplicity of the line
$\{x+y=1\}$ in $\sigma_p(\gamma A_1+\beta A_2, A_2)$, hence is 
$r$-dimensional. Clearly it is an eigensubspace 
of eigenvalue $1$ for $A_1$ as well. 

Now we proceed with the proof of (3). In view of part (2), by 
restricting to orthogonal 
complements if necessary, we may assume without loss of generality that 
$\sigma_p(A_1,A_2)$ contains none of the lines $\{ x\pm y=1\}$ or 
$\{x \pm y= -1\}$. 
By part (1) and its proof, all reduced components of 
the proper joint spectrum are the nonsingular ``ellipses'' 
\begin{equation}\label{ellipse3}
\mathcal E_i=
\{
x^2 + 2xy\cos(2\pi\theta_i) + y^2 = 1
\}, 
\quad\text{with}\quad 
1/2 > \theta_1 > \dots > \theta_s > 0, 
\end{equation} 
their intersection 
points are $(\pm 1,0)$ and $(0,\pm 1)$, and   
$\alpha_i=2\cos(2\pi\theta_i)$ is 
an eigenvalue of $R =A_1A_2+A_2A_1$ for each $i$. Note that if
$M_i$ is the corresponding eigenspace of $R$, then for every 
$(x,y)\in \R^2$ 
on the ellipse $\mathcal E_i$ the space $M_i$ is invariant under
the action of $xA_1+yA_2$. Indeed, let $\xi \in M_i$. 
Then
$$
(xA_1+yA_2)^2\xi=(x^2+y^2)\xi+xyR_1\xi=\xi.
$$
Thus, the restriction of $(xA_1+yA_2)^2$ to $M_i$ is the identity of $M_i$.
Since the eigenvalues of $(xA_1+yA_2)^2$ are just the squares of the 
eigenvalues of $xA_1+yA_2$ with the same eigenvectors, every
eigenvector of $(xA_1+yA_2)^2$ with eigenvalue $1$ is a linear 
combination of eigenvectors of $(xA_1+yA_2)$
with eigenvalues $\pm 1 $. Conversely, 
the displayed above equality shows also that when $xy\ne 0$   
each eigenvector $\xi$ of $(xA_1+yA_2)$ with eigenvalue $\pm 1$,
is an eigenvector of $R$ with eigenvalue $\alpha_i$. Thus, 
when $xy\ne 0$ the eigenspace $M_i$ is the direct sum of 
the eigenspaces of $(xA_1+yA_2)$
with eigenvalues $\pm 1$, and, therefore, is invariant under $(xA_1+yA_2)$. 
In particular, since when $xy\ne 0$ the point $(x,y)$ belongs to only one 
reduced component (of multiplicity $n_i$) of $\sigma_p(A_1,A_2)$,  
is a nonsingular point on that component, 
and no lines through the origin are tangent to any of the reduced 
components of $\sigma_p(A_1,A_2)$, by 
Proposition~\ref{R:multiplicity-eigenvalues} the eigenspace of 
$xA_1+yA_2$ of eigenvalue $1$ has dimension $n_i$. Since our components 
are invariant under the transformation $(x,y)\mapsto (-x,-y)$, 
when $xy\ne 0$ the 
eigenspace of $xA_1+yA_2$ of eigenvalue $-1$ is also of dimension $n_i$;   
hence the dimension of $M_i$ is $2n_i$.

Next, 
the projective joint spectrum $\sigma(I,A_1,A_2)$ is the union of 
the homogenizations of the ``ellipses'' \eqref{ellipse3},  
with the same multiplicities. Furthermore, since there are 
no real  solutions to the equation 
$
x^2 + 2xy\cos(2\pi\theta_i) + y^2 = 0,  
$
there are no real points of the form
$[0 : x : y]$ in $\sigma(I,A_1,A_2)$, i.e. 
no real points on the ``infinite'' line $\{x_0=0\}$; and hence 
$0$ is not in the spectrum of $(xA_1+yA_2)$. 
Therefore 
for every 
$(x,y)\in \R^2$ with $(x,y)\neq (0,0)$ the operator $(xA_1+yA_2)$ 
is invertible. Also, 
if $(x,y)\in\mathbb R^2$ belongs to $\mathcal E_i$, then
\begin{equation}\label{ellipse4}
(xA_1+yA_2)^{-1}|_{M_i}=(xA_1+yA_2)|_{M_i}.
\end{equation}
Indeed, 
let 
$\xi \in M_i$. We have
$$
(xA_1+yA_2)^2\xi=(x^2+xy\alpha_i+y^2)\xi.
$$
Thus, $(xA_1+yA_2)^{-1}$ is defined by
\begin{equation}\label{ellipse5}
(xA_1+yA_2)^{-1}|_{M_i}=\frac{1}{x^2+xy\alpha_i +y^2}(xA_1+yA_2)|_{M_i},
\end{equation}
which, of course, implies \eqref{ellipse4}.

Next we note that
$$
x^2+2xy\cos(2\pi\theta_i) +y^2= 
\frac{1+\cos(2\pi\theta_i)}{2}(x+y)^2 + 
\frac{1-\cos(2\pi\theta_i)}{2}(x-y)^2,
$$
and, therefore, each ``ellipse'' \eqref{ellipse3} 
is centered at the origin,  
has axes along lines $\{x=y\}$ and $\{x=-y\}$,
the lengths of its semiaxes 
$\sqrt{\frac{1}{1+\cos(2\pi\theta_i)}}$ and 
$\sqrt{\frac{1}{1-\cos(2\pi\theta_i)}}$ respectively, and 
recall that we write $n_i$ for its multiplicity.

Set 
$$
\begin{array}{l}
B_1=\frac{1}{\sqrt{2\bigl(1+\cos(2\pi\theta_1)\bigr)}}(A_1+A_2)  \\
B_2=\frac{1}{\sqrt{2\bigl(1-\cos(2\pi\theta_1)\bigr)}}(A_1-A_2). 
\end{array}
$$
It is easy to check that the joint spectrum of $B_1$ and $B_2$ is
the union of the ``ellipses'' 
\begin{equation}\label{spectrum3}
\mathcal E_i' = 
\left \{ 
\frac{1+ \cos(2\pi\theta_i)}{1+ \cos(2\pi\theta_1)}x^2 +
\frac{1- \cos(2\pi\theta_i)}{1- \cos(2\pi\theta_1)}y^2 =1 
\right \}
\end{equation}
for $i=1,\dots,s$, each with multiplicity $n_i$. 
In particular, this implies that the unit circle $\{x^2+y^2=1\}$ is 
in $\sigma_p(B_1,B_2)$, has multiplicity $n_1$, and the spectrum of $B_2$ 
consists of numbers
$\{ \pm \sqrt{\frac{1- \cos(2\pi\theta_i)}{1- \cos(2\pi\theta_1)}} \} $. 
Hence, the norm of $B_2$ is equal to one. It also follows from 
\eqref{spectrum3} that
the points $(\pm 1,0)$ and $(0, \pm 1)$ are regular points of
$\sigma_p(B_1,B_2)$, all of them having multiplicity $n_1$.

Finally, if $(x,y)\in \mathbb R^2$ belongs to $\mathcal E'_i$, then
\begin{multline*}
xB_1+yB_2=
\left( 
\frac{1}{\sqrt{2\bigl(1+\cos(2\pi\theta_1)\bigr)}}x+
\frac{1}{\sqrt{2\bigl(1-\cos(2\pi\theta_1)\bigr)}}y 
\right )A_1 \\
+ 
\left( 
\frac{1}{\sqrt{2\bigl(1+\cos(2\pi\theta_1)\bigr)}}x- 
\frac{1}{\sqrt{2\bigl(1-\cos(2\pi\theta_1)\bigr)}}y 
\right )A_2=u_1A_1+u_2A_2.
\end{multline*}
It is easy to check that
$$ 
u_1^2+2u_1u_2\cos(2\pi\theta_i)+u_2^2= 1 
$$
which, of course, means that $(u_1,u_2)\in {\mathcal E}_i$. 
According to \eqref{ellipse4}
\begin{multline*}
(xB_1+yB_2)^{-1}|_{M_i}=(u_1A_1+u_2A_2)^{-1}|_{M_i} \\
=(u_1A_1+u_2A_2)|_{M_i}=(xB_1+yB_2)|_{M_i}. 
\end{multline*}
Let $\lambda_i=\sqrt{\frac{1 +\cos(2\pi\theta_1)}{1+ \cos(2\pi\theta_i)}}$. 
It follows that  $(\lambda_iB_1)^{-1}|_{M_i}=\lambda_iB_1|_{M_i}$, hence 
\begin{multline*}
(xB_1^{-1}+yB_2)|_{M_i}=
xB_1^{-1}|_{M_i}+yB_2|_{M_i} \\ 
=
x\lambda_i^2B_1|_{M_1}+yB_2|_{M_i}=
(x\lambda_i^2B_1+yB_2)|_{M_i}.  
\end{multline*} 
Therefore, except for finitely many points,  
when $(\lambda_i^2x, y)$ is on $\mathcal E_i'$ the 
space $M_i$ is the direct sum of eigenspaces of eigenvalues $\pm 1$ 
for the operator $(xB^{-1}_1+yB_2)$. Thus each ellipse 
$
\mathcal E_i''= 
\{
\frac{1+ \cos(2\pi\theta_1)}{1+ \cos(2\pi\theta_i)}
x^2 + \frac{1- \cos(2\pi\theta_i)}{1- \cos(2\pi\theta_1)}y^2 =1 
\} 
$
is a reduced component of $\sigma_p(B_1^{-1},B_2)$, and by 
Proposition~\ref{R:multiplicity-eigenvalues} it has multiplicity $\ge n_i$. 
Since the sum of the multiplicities equals $(1/2)\dim V$, we must have that 
the multiplicity of $\mathcal E_i''$ equals $n_i$ for each $i$, and 
therefore these are all reduced components of $\sigma_p(B_1^{-1},B_2)$. 
In particular, the ``unit circle'' $\{x^2+y^2=1\}$ is a reduced component  
of $\sigma_p(B_1^{-1},B_2)$, of multiplicity $n_1$, and the points 
$(\pm 1,0)$ do not belong to any other of its components. 
It follows from Theorem~\ref{circle}
that $M_1$ is a direct sum of $n_1$ two-dimensional common 
invariant subspaces for $B_1$ and $B_2$ and the restriction 
of the pair $B_1,B_2$ 
to each of these subspaces is unitary equivalent to the pair
$$
\left[ 
\begin{array}{rr}
1 & 0 \\
0 & -1  
\end{array} 
\right ] , \ 
\left [ 
\begin{array}{ll}
0 & 1 \\
1 & 0 
\end{array} 
\right ] .
$$ 
Of course, each of these subspaces is also  
invariant under $A_1$ and $A_2$, and the restrictions on these 
subspaces of the pair $A_1,A_2$ is unitary equivalent to
$$
\left [ 
\begin{array}{rr}
\cos(\pi\theta_1) & \sin(\pi\theta_1) \\   
\sin(\pi\theta_1) & -\cos(\pi\theta_1) 
\end{array} 
\right ], \  
\left [ 
\begin{array}{rr}
\cos(\pi\theta_1) & -\sin(\pi\theta_1) \\
-\sin(\pi\theta_1) & -\cos(\pi\theta_1) 
\end{array} 
\right ].      
$$
Denote by ${\mathcal N}_1$ the orthocomplement to $M_1$. Then 
${\mathcal N}_1$ is invariant under $A_1$ and $A_2$ and 
the proper joint spectrum 
$
\sigma_p(A_1|_{{\mathcal N}_1},A_2|_{{\mathcal N}_1})
$
is the union of the ellipses $\mathcal E_j$ with $j=2,\dots,s$, each 
with multiplicity $n_j$.  
A similar argument applied to the subspace $M_2$ of ${\mathcal N}_1$ 
shows that $M_2$ is a direct sum of
$n_2$ two-dimensional common invariant subspaces for 
$A_1$ and $A_2$, and that their restrictions 
to those subspaces are
unitary equivalent to
\begin{equation}
\left [ 
\begin{array}{rr}
\cos(\pi\theta_2) & \sin(\pi\theta_2) \\
\sin(\pi\theta_2) & -\cos(\pi\theta_2) 
\end{array} 
\right ], \  
\left [ 
\begin{array}{rr}
\cos(\pi\theta_2) & -\sin(\pi\theta_2) \\
-\sin(\pi\theta_2) & -\cos(\pi\theta_2) 
\end{array} 
\right ].
\end{equation}
Since $\theta=\theta_j$ for some $j$, iterating the 
above procedure $j$ times yields the claimed result.
\end{proof}

As an immediate consequence of Theorem~\ref{reducing-unitary} 
and its proof we have:

\begin{theorem}\label{T:main-theorem-dihedral}
Let $G$ be a Coxeter group of type $I(n)$ for some $2\le n\le\infty$, 
let $\{g_1,g_2\}$ be a set of Coxeter generators, and let 
$\rho: G\rightarrow GL(V)$ be a finite dimensional unitary 
representation of $G$. 
\begin{enumerate} 
\item
If $\rho':G\rightarrow GL(V')$ is another finite dimensional 
uintary representation of $G$ such that 
$
\sigma_p\bigl(\rho(g_1),\rho(g_2)\bigr)=
\sigma_p\bigl(\rho'(g_1),\rho'(g_2)\bigr)
$ 
as subschemes of $\C^2$, then $\rho$ and $\rho'$ are equivalent. 

\item
If $\rho$ is irreducible then $\sigma_p\bigl(\rho(g_1),\rho(g_2)\bigr)$ 
is reduced and irreducible. \qed
\end{enumerate}
\end{theorem}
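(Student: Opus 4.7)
The plan is to treat Theorem~\ref{reducing-unitary} as furnishing a unitary normal form for every finite-dimensional unitary representation of the dihedral group $G=I(n)$, and then to read off both statements from that normal form. Setting $A_i=\rho(g_i)$, the hypothesis that $\rho$ is unitary and that each $g_i$ is an involution forces every $A_i$ to be a unitary self-adjoint operator, so Theorem~\ref{reducing-unitary} applies.

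For part~(1), I also set $A_i'=\rho'(g_i)$ and apply Theorem~\ref{reducing-unitary} to both pairs. This yields orthogonal decompositions $V=\bigoplus_j W_j$ and $V'=\bigoplus_j W'_j$ into common invariant subspaces of dimension $1$ or $2$: on a $1$-dimensional summand attached to a line $\{x\pm y=\pm 1\}$ the pair $(A_1,A_2)$ acts via the matching sign-pair $(\pm 1,\pm 1)$; on a $2$-dimensional summand attached to an ellipse with parameter $0<\theta<1/2$, the pair is unitarily equivalent to the explicit matrices
\[
\left[\begin{array}{rr}\cos(\pi\theta) & \sin(\pi\theta) \\ \sin(\pi\theta) & -\cos(\pi\theta)\end{array}\right],\quad \left[\begin{array}{rr}\cos(\pi\theta) & -\sin(\pi\theta) \\ -\sin(\pi\theta) & -\cos(\pi\theta)\end{array}\right]
\]
produced at the end of the proof of Theorem~\ref{reducing-unitary}. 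The multiplicity with which each such building block appears in the decomposition equals the multiplicity of the associated reduced component of the proper joint spectrum, and a dimension count $\sum r_i\deg F_i=\dim V$ shows that these blocks exhaust $V$. The hypothesis $\sigma_p(A_1,A_2)=\sigma_p(A_1',A_2')$ as subschemes of $\C^2$ therefore forces the multisets of reduced components (with multiplicities) to coincide, and hence the two decompositions to be built from the same blocks with the same multiplicities. Matching corresponding blocks via the prescribed unitaries produces a unitary $C:V\to V'$ with $C\rho(g_i)=\rho'(g_i)C$ for $i=1,2$; since $\{g_1,g_2\}$ generates $G$, this gives $\rho\simeq\rho'$.

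For part~(2), I apply the same decomposition to the irreducible $\rho$. Each $W_j$ is invariant under both $\rho(g_1)$ and $\rho(g_2)$ and hence under $\rho(G)$, so irreducibility collapses the decomposition to a single summand. Thus $\dim V\in\{1,2\}$, and the defining polynomial of $\sigma_p(\rho(g_1),\rho(g_2))$, which has degree exactly $\dim V$, is either a single linear equation $x\pm y=\pm 1$ (when $\dim V=1$) or a single irreducible quadratic $x^2+2xy\cos(2\pi\theta)+y^2-1$ with $0<\theta<1/2$ (when $\dim V=2$). In either case $\sigma_p$ is reduced and irreducible.

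The only real obstacle is to confirm that the $2$-dimensional normal form produced by Theorem~\ref{reducing-unitary} depends only on the label $\theta$ of the ellipse component and not on auxiliary choices, for otherwise the block-by-block matching between $V$ and $V'$ cannot be performed from spectral data alone. That uniqueness is exactly what the closing paragraph of the proof of Theorem~\ref{reducing-unitary} establishes, so what remains is routine bookkeeping with reduced components and multiplicities.
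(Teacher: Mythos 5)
Your proposal is correct and follows essentially the same route as the paper, which derives this theorem directly from Theorem~\ref{reducing-unitary} and its proof: the joint spectrum's reduced components and multiplicities encode the orthogonal decomposition of the pair $\bigl(\rho(g_1),\rho(g_2)\bigr)$ into the explicit one- and two-dimensional normal-form blocks, so equal subschemes force unitarily equivalent representations, and irreducibility forces a single block, hence a reduced irreducible spectrum. Your block-by-block bookkeeping (dimension count, matching multiplicities, and the $\theta$-dependence of the $2\times 2$ normal form) is exactly the content the paper leaves implicit in the phrase ``immediate consequence of Theorem~\ref{reducing-unitary} and its proof.''
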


\section{Admissible transformations}\label{c-length} 
 

For the rest of this paper $n\ge 2$, the group $G$ 
is a finite Coxeter group  of type $A_n$, $B_n$, or $D_{n+1}$, 
and $\{g_1,g_2,\dots \}$ is 
a set of Coxeter generators for $G$.  

For a word $w=g_{i_1}\dots g_{i_N}$, where the same 
letter $g_k$ might occur more 
than once, we write $|w|=N$. Let $a_k$ be the number of times $g_k$ 
occurs in $w$. The \emph{signature} of $w$ is the sequence 
\[
\sign(w) = (a_1, a_2, \dots ), 
\]
and the \emph{content} of $w$ is the sequence 
\[
\cont(w) = ( |w|, a_1, \dotsm, a_{n-1} )\in \mathbb N^n. 
\]
Since $|w|= a_1 + a_2 + \dots $, in types $A_n$ and $B_n$ the content 
and the signature of $w$ carry the same information, while in type $D_{n+1}$ 
it is clearly possible to have words with the same content but 
different signatures. Let $\le_{lex}$ be the lexicographic ordering on 
$\mathbb N^n$, that is, $c=(c_0,\dots,c_{n-1})<_{lex}(d_0,\dots,d_{n-1})=d$ 
precisely when  $c\ne d$ and $c_m<d_m$ for $m=\min\{i\mid c_i\ne d_i\}$. 
We always consider our words partially ordered $<$ by 
\[
w_1< w_2  \quad\text{if and only if}\quad
\cont(w_1)<_{lex} \cont(w_2). 
\]
For example, $g_2g_1g_2 < g_1g_2g_1$ whenever $n\ge 2$.

\begin{definition}
(a)   
We introduce \emph{admissible transformations} on words. They are: 
\begin{enumerate}  
%
\item 
Cancelling transformations. That is, for each  $i$ the transformation 
\[
w'g_ig_iw'' \mapsto w'w'' 
\]
is admissible. 

\item 
Commuting transformations. This means that if 
$g_i$ and $g_j$ commute, then the transformation 
\[
w'g_ig_jw'' \mapsto w'g_jg_iw''
\] 
is admissible. 

\item 
Circular transformations. Those are: 
\[ 
g_{i_1}\dots g_{i_k}g_{i_{k+1}}\dots g_{i_N} \mapsto 
g_{i_{k+1}}\dots g_{i_N}g_{i_1}\dots g_{i_k}.
\] 

\item 
Replacement transformations. These 
replace a certain subword consisting of a successive string of letters 
by another representation of this subword. Specifically,   
\[
\begin{aligned} 
w'g_ig_{i+1}g_iw'' &\mapsto w'g_{i+1}g_ig_{i+1}w'' &   
&\text{for $1 \le i\le n-2$};     \\
w'g_{n-1}g_ng_{n-1}w'' &\mapsto w'g_ng_{n-1}g_nw'' &   
&\text{in $A_n$ and $D_{n+1}$}; \\
w'g_{n-1}g_{n+1}g_{n-1}w'' &\mapsto w'g_{n+1}g_{n-1}g_{n+1}w'' &  
&\text{in $D_{n+1}$}; 
\end{aligned} 
\]
and in addition the ``tent commuting'' replacements  
\[
\begin{aligned} 
w't_kg_jw'' &\mapsto w'g_jt_kw''  \\ 
&\qquad\qquad\text{for $k<j\le n-1$ in $B_n$ and $D_{n+1}$}; \\
w't_kg_nw'' &\mapsto w'g_{n+1}t_kw'' \ \text{ and }  \\ 
w't_kg_{n+1}w'' &\mapsto w'g_nt_kw''  \\ 
&\qquad\qquad\text{for $k\le n-1$ in $D_{n+1}$}; \\
w't_kg_nw'' &\mapsto w'g_nt_kw'' \\  
&\qquad\qquad\text{for $k\le n-1$ in $B_n$} 
\end{aligned}
\] 
are admissible transformations. 
Here the 
$k$th ``tent word'' $t_k$ is defined for $1\le k\le n-1$ as 
\[
t_k= 
\begin{cases}
g_k\dots g_{n-1}g_ng_{n-1} \dots g_k  &\text{in } B_n; \\ 
g_k\dots g_{n-1}g_ng_{n+1}g_{n-1}\dots g_k &\text{in } D_{n+1}.  
\end{cases} 
\]
\end{enumerate} 

(b) 
We write $w_1\leadsto w_2$ if there is a sequence of admissible 
transformations that maps $w_1$ into $w_2$. Taking the equivalence 
closure of this transitive relation on words, we obtain the notion  
of {\it $c$-equivalence}. We write $w_1\sim w_2$ if the words   
$w_1$ and $w_2$ are $c$-equivalent.
\end{definition}

\begin{remark}\label{R:admissible-facts}
(a) 
Note that in $B_n$ and $D_{n+1}$ we have the following 
``tent commuting'' equalities of elements. First,  
\[
\begin{aligned}
t_kg_j &= u g_{j-1}g_jt_{j+1}g_jg_{j-1}(v g_j)   \\   
      &= u g_{j-1}g_jt_{j+1}(g_jg_{j-1}g_j) v   \\
      &= u g_{j-1}g_j(t_{j+1}g_{j-1})g_jg_{j-1}v \\   
      &= u (g_{j-1}g_jg_{j-1})t_{j+1}g_jg_{j-1}v \\  
      &= (u g_j)g_{j-1}g_jt_{j+1}g_jg_{j-1}v    \\ 
      &= g_j(ug_{j-1}g_jt_{j+1}g_jg_{j-1}v)     \\ 
      &= g_j t_k
\end{aligned}   
\]  
whenever $k<j\le n-1$, as $u$ and $v$ only involve letters $g_i$ with 
$i\le j-2$ hence they commute with $g_j$, while $t_{j+1}$ only involves 
$g_i$s with $i\ge j+1$, hence it commutes with $g_{j-1}$. For trivial 
reasons, the same equality also holds when $j+1 < k\le n-1$. 
In a similar manner, when 
$k\le n-1$ we have in $D_{n+1}$ the equalities 
\[
\begin{aligned}
t_kg_n &= u g_{n-1}(g_ng_{n+1})g_{n-1}(v g_n)   \\   
      &= u g_{n-1}g_{n+1}(g_ng_{n-1}g_n) v   \\
      &= u (g_{n-1}g_{n+1}g_{n-1})g_ng_{n-1}v \\   
      &= (u g_{n+1})g_{n-1}(g_{n+1}g_n)g_{n-1}v \\  
      &= g_{n+1}(ug_{n-1}g_ng_{n+1}g_{n-1}v)     \\ 
      &= g_{n+1} t_k
\end{aligned}   
\]
and $t_kg_{n+1}=g_nt_k$. Finally, in $B_n$ we have the equalities 
\[
\begin{aligned}
t_kg_n &= u g_{n-1}g_ng_{n-1}(v g_n)   \\   
      &= u (g_{n-1}g_ng_{n-1}g_n) v   \\
      &= (u g_n)g_{n-1}g_ng_{n-1}v \\   
      &= g_n(ug_{n-1}g_ng_{n-1}v)     \\ 
      &= g_n t_k  
\end{aligned}   
\]
for all $k\le n-1$. 

(b) 
Combining the above yields in $B_n$ and $D_{n+1}$ also the tent commuting 
equalities 
\[
t_kt_j = t_j t_k 
\]
whenever $1\le k, j\le n-1$. 

(c)
It is immediate from (a) and the defining relations of $G$ 
that if $w_1\leadsto w_2$ and no circular transformations 
were used, then $w_1$ and $w_2$ represent the same element of 
the group $G$. 

(d)
It is also clear from (a) 
that if $w_1$ is $c$-equivalent to $w_2$ then these words 
represent elements of $G$ that 
belong to the same conjugacy class. The converse to this is also true, 
but since we will not need it here, we leave the easy though somewhat 
cumbersome proof as an exercise for 
the interested reader. 
\end{remark}

We conclude this section with a simple but crucially important 
observation.  

\begin{lemma}\label{L:admissible-reduces} 
If \ 
$w_1\leadsto w_2$ \ 
then \ $\cont(w_1)\ge_{lex}\cont(w_2)$. 
\end{lemma}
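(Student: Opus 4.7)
The plan is to reduce the claim to checking a single step: if $w_1 \mapsto w_2$ is one admissible transformation, then $\cont(w_1) \ge_{lex} \cont(w_2)$. Since $\leadsto$ is by definition a finite composition of such steps, and $\ge_{lex}$ is transitive, this suffices. I would then simply run through the list of admissible transformations case by case and check each against the definition $\cont(w) = (|w|, a_1,\dots,a_{n-1})$, paying special attention to which letter-counts are actually \emph{tracked} by $\cont$.

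For cancelling transformations $w'g_ig_iw'' \mapsto w'w''$ the length $|w|$ drops by $2$, so the first coordinate of $\cont$ already decreases and we win (in fact strictly). For commuting and circular transformations the multiset of letters in $w$ is unchanged, hence $\cont$ is literally preserved. The braid replacements $w'g_ig_{i+1}g_iw'' \mapsto w'g_{i+1}g_ig_{i+1}w''$ with $1\le i\le n-2$ preserve $|w|$ and $a_j$ for $j\ne i,i+1$, while $a_i$ drops by $1$ and $a_{i+1}$ increases by $1$; both indices are $\le n-1$ so both are tracked, and the \emph{first} entry of $\cont$ where the two words differ is $a_i$, which decreases. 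Thus we get a strict lex-decrease.

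The key small point to observe is that the content tuple stops at $a_{n-1}$, so the counts $a_n$ in types $A_n$, $B_n$, $D_{n+1}$, and additionally $a_{n+1}$ in type $D_{n+1}$, are \emph{untracked}. This is exactly what makes the remaining replacements behave correctly. For $w'g_{n-1}g_ng_{n-1}w'' \mapsto w'g_ng_{n-1}g_nw''$ in $A_n$ and $D_{n+1}$ only $a_{n-1}$ (tracked, decreases by $1$) and $a_n$ (untracked) change, so $\cont$ strictly decreases in its $(n\!-\!1)$st tracked slot; the analogous move in $D_{n+1}$ with $g_{n+1}$ in place of $g_n$ is identical. For the tent-commuting replacements $w't_kg_jw''\mapsto w'g_jt_kw''$ with $k<j\le n-1$, the multiset of letters is unchanged, so $\cont$ is preserved. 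For the $D_{n+1}$ replacements $w't_kg_nw''\mapsto w'g_{n+1}t_kw''$ and $w't_kg_{n+1}w''\mapsto w'g_nt_kw''$, a single $g_n$ is swapped for a single $g_{n+1}$ (or vice versa) and the position of $t_k$ shifts; since \emph{both} $a_n$ and $a_{n+1}$ are untracked and $t_k$ contributes a fixed multiset of letters regardless of its location, $\cont$ is again preserved. The $B_n$ tent-commuting move leaves the multiset of letters unchanged.

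There is no real obstacle: the whole argument is a mechanical inspection, and the only substantive observation is that the content tuple deliberately omits $a_n$ (and $a_{n+1}$ in type $D_{n+1}$), which is precisely what allows the replacements producing or consuming $g_n,g_{n+1}$ to be admissible while still not increasing $\cont$. Once this is noted, each admissible transformation either strictly lowers $|w|$, strictly lowers some tracked $a_i$ with $i\le n-1$, or preserves the entire multiset of letters; in every case $\cont(w_1)\ge_{lex}\cont(w_2)$, and the lemma follows.
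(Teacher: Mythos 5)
Your proof is correct and is essentially the paper's own argument: the paper's one-line proof simply asserts that each listed admissible transformation either preserves the content or lowers it, and your case-by-case inspection (reducing to a single step via transitivity of $\ge_{lex}$, and noting that $a_n$, and $a_{n+1}$ in type $D_{n+1}$, are untracked by $\cont$) spells out exactly that check.
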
 

\begin{proof} 
Indeed every listed admissible transformation either preserves 
the content of the word, or lowers it. 
\end{proof}

\section{Echelon forms}
\label{S:echelon-forms}

Now we will show that words in our Coxeter group 
$G$ have conjugates in a special form that we 
call \textbf{echelon form}.

\begin{definition}\label{D:echelon-form} 
We say that a word $w$ is in \emph{echelon form} if it is of the 
form 
\[
w=\delta_1\delta_2\dots\delta_n\delta_{n+1} 
\]
where for each $i$ the word $\delta_i$ satisfies 
\begin{equation}\label{E:delta-conditions} 
\delta_i \in 
\begin{cases} 
\quad \{1, g_i\}      &\text{if $i\le n$ and $G=A_n$}; \\ 
\quad \{1 \}          &\text{if $i=n+1$ and $G=A_n$ or $G=B_n$}; \\
\quad \{1, g_n\}      &\text{if $i=n$ and $G=B_n$}; \\   
\quad \{1, g_i\}      &\text{if $i\ge n$ and $G=D_{n+1}$}; \\ 
\quad \{1, g_i, t_i\} &\text{if $i\le n-1$ and $G=B_n$ or $G=D_{n+1}$}.  
\end{cases} 
\end{equation}
\end{definition}

\begin{remark}\label{R:content-determines-echelon}  
Note that when $w$ is in echelon form, it can be recovered 
from $\cont(w)$ in types $A_n$ and $B_n$, while in type $D_{n+1}$ 
there are at most two possibilities for $w$ given its  
content. More precisely, in all types,  
given $\cont(w)=(|w|, a_1,\dots, a_{n-1})$, 
the tuple $(a_1,\dots, a_i)$ determines the tuple  
$(\delta_1,\dots,\delta_i)$ whenever $1\le i\le n-1$. In types 
$A_n$ and $B_n$ the tuple $(\delta_1,\dots,\delta_{n-1})$ together 
with $|w|$ determines $\delta_n$ and hence $w$. In type $D_{n+1}$ 
the tuple $(\delta_1,\dots,\delta_{n-1})$ together with $|w|$ leaves 
at most two possibilities for $(\delta_n, \delta_{n+1})$, 
depending on the parity of $a_n+a_{n+1}$. If this parity is even, 
there is no choice. If the parity is odd, then there is a choice 
between either $(g_n, 1)$ or $(1, g_{n+1})$ for 
$(\delta_n,\delta_{n+1})$.       
\end{remark} 

We can extend  the last remark even further. 

\begin{proposition}\label{P:echelon-conjugates} 
Let $w_1$ and $w_2$ be two words in $D_{n+1}$, both in echelon form, and  
suppose that $\cont(w_1)=\cont(w_2)=(W,a_1,\dots, a_{n-1})$. 

If $W-a_1-\dots - a_{n-1}\ge 3$ then $w_1$ and $w_2$ 
belong to the same conjugacy class as elements of $G$.  
\end{proposition}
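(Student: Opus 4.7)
The plan is to show that $w_1$ and $w_2$ are $c$-equivalent in the sense of Section~\ref{c-length}; by Remark~\ref{R:admissible-facts}(d) this will imply they represent conjugate elements of $G$. By Remark~\ref{R:content-determines-echelon}, two echelon forms in $D_{n+1}$ with the same content agree unless the parity of $a_n+a_{n+1}$ is odd, in which case the only freedom is in the pair $(\delta_n,\delta_{n+1})$, which is either $(g_n,1)$ or $(1,g_{n+1})$. So I write
\[
w_1 \;=\; \delta_1\cdots\delta_{n-1}\,g_n, \qquad w_2 \;=\; \delta_1\cdots\delta_{n-1}\,g_{n+1}.
\]
In an echelon word, $a_n+a_{n+1}=2T+|\delta_n|+|\delta_{n+1}|$, where $T$ counts the tent factors among $\delta_1,\ldots,\delta_{n-1}$; together with $|\delta_n|+|\delta_{n+1}|=1$ and the hypothesis $a_n+a_{n+1}\ge 3$, this forces $T\ge 1$. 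Fix an index $k$ with $\delta_k=t_k$.

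First I would push $t_k$ rightward past $\delta_{k+1},\ldots,\delta_{n-1}$ using the admissible tent-commuting moves $t_kg_i\leftrightarrow g_it_k$ (for $k<i\le n-1$) and $t_kt_j\leftrightarrow t_jt_k$ from Remark~\ref{R:admissible-facts}(b), so that $t_k$ becomes adjacent to the trailing $g_n$. Then apply the admissible tent-swap $t_kg_n\to g_{n+1}t_k$:
\[
w_1 \;\leadsto\; \delta_1\cdots\delta_{k-1}\,\delta_{k+1}\cdots\delta_{n-1}\,g_{n+1}\,t_k.
\]
Next I would rearrange this word back into an echelon form by: pushing $g_{n+1}$ leftward through each $\delta_i$ via commuting admissible moves (valid since $g_{n+1}$ is non-adjacent to each $g_j$ with $j\le n-2$); cyclically rotating $g_{n+1}$ to the end of the word; and then pushing $t_k$ leftward back to position $k$ via tent-commuting. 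If no obstruction arises, the result is $w_2=\delta_1\cdots\delta_{n-1}g_{n+1}$, as desired.

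The main obstacle is that the rearrangement encounters three possible blockers: $g_{n+1}$ does not commute with $g_{n-1}$ (obstruction when $\delta_{n-1}=g_{n-1}$), $g_{n+1}$ does not commute with any tent $t_j$ for $j\ne k$ (the relation $g_{n+1}t_j\sim t_jg_n$ would undo the swap), and $t_k$ does not commute with $g_{k-1}$ (obstruction when $\delta_{k-1}=g_{k-1}$). To overcome these I would invoke the auxiliary admissible-sequence identity $g_{k-1}t_k\sim t_{k-1}g_{k-1}$, valid for $k\ge 2$ and derived by inserting $g_{k-1}g_{k-1}=1$ at the end of $g_{k-1}t_k$ (the reverse of a cancelling admissible move) and recognizing the resulting word as the full spelling of $t_{k-1}g_{k-1}$. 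Iterating this shift relocates the tent to ever smaller indices, and when the tent reaches position $1$ there is no preceding obstruction at all. When several tents are present, the tent–tent commutations of Remark~\ref{R:admissible-facts}(b) allow one to reorder them freely so that at each push the swap $g_n\leftrightarrow g_{n+1}$ is performed an odd total number of times by the tents in transit.

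The hard part is the bookkeeping: after one or more shifts the intermediate words are no longer in echelon form, so one must carefully track which generators each shifted tent or each migrating $g_{n+1}$ will encounter, and verify that the parallel sequence of moves performed on $w_2$ terminates in a word differing only by the final $g_n\leftrightarrow g_{n+1}$ swap. I expect the cleanest formulation to proceed by induction on the leftmost tent index, with the base case (tent at position~$1$) handled directly by conjugation via $t_1$ realized as an explicit chain of admissible moves.
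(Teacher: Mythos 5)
Your reduction is the same as the paper's: by Remark~\ref{R:content-determines-echelon} one may assume $w_1=\delta_1\cdots\delta_{n-1}g_n$ and $w_2=\delta_1\cdots\delta_{n-1}g_{n+1}$, the hypothesis $a_n+a_{n+1}\ge 3$ forces some $\delta_k=t_k$, and the opening move (commute $t_k$ to the right end, swap $t_kg_n\to g_{n+1}t_k$) is also the paper's first step. The gap is in the return trip, which is the actual content of the proposition. The route you propose --- push $g_{n+1}$ all the way to the front, rotate it to the end, then push $t_k$ back to slot $k$ --- fails in exactly the typical configurations, as you yourself list: if $\delta_{n-1}=g_{n-1}$ the letter $g_{n+1}$ cannot take even its first step; every other tent it crosses flips $g_{n+1}\leftrightarrow g_n$, and it is not explained how tent--tent commutations could change the parity of the number of tents the migrating letter must cross; and the identity $g_{k-1}t_k\sim t_{k-1}g_{k-1}$, though correct (it is reverse cancellation inside the spelling $t_{k-1}=g_{k-1}t_kg_{k-1}$), changes the content, destroys echelon form, and does nothing about a blocking $g_{n-1}$ sitting between the tent and the end. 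All of this is deferred to ``bookkeeping'' and an induction on the leftmost tent index that is never carried out, so the proof is incomplete precisely where the combinatorial difficulty lies.

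For comparison, the paper's proof sidesteps every one of these obstructions by a specific choice and a different use of the circular move: take $k$ to be the \emph{largest} tent index and $j$ the second largest (if any), so that $\delta_{j+1},\dots,\delta_{k-1}$ and $\delta_{k+1},\dots,\delta_{n-1}$ are single letters. Commute $\delta_{k+1}\cdots\delta_{n-1}$ leftward past $(\delta_{j+1}\cdots\delta_{k-1})t_k$, perform the swap $t_kg_n=g_{n+1}t_k$, slide $g_{n+1}$ only past the letters $\delta_{j+1}\cdots\delta_{k-1}$ (all of index at most $n-2$, so no $g_{n-1}$ and no tent is ever crossed), and then apply a single circular transformation carrying the whole block $(\delta_{j+1}\cdots\delta_{k-1})t_k$ to the front, where it commutes freely past $\delta_1\cdots\delta_{j-1}t_j$ (the single letters there have index at most $j-1\le k-2$, and tents commute with tents), landing exactly on $w_2$; when there is only one tent the circular move already produces $w_2$. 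If you want to repair your argument, the missing idea is this: fix $k$ maximal and, instead of marching $g_{n+1}$ through the whole word, move the block ending in $t_k$ by one circular transformation.
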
 

\begin{proof} 
By Remark~\ref{R:content-determines-echelon} we may assume 
that $a_n+a_{n+1}=W-a_1-\dots - a_{n-1}$ is odd and that 
\[
w_1=\delta_1\dots\delta_{n-1}g_n 
\quad\text{and}\quad 
w_2=\delta_1\dots\delta_{n-1}g_{n+1}, 
\] 
where each $\delta_i$ satisfies \eqref{E:delta-conditions}. 
Since $a_n+a_{n+1}\ge 3$, we must have  
$\delta_i=t_i$ for at least one index $i$. Let $k$ be 
the biggest such index, and let $j$ be the second biggest 
(if such exists). Then $\delta_i\in\{1,g_i\}$ for each $j< i<n$
with $i\ne k$, hence the tent commuting equalities from 
Remark~\ref{R:admissible-facts}  yield 
\begin{align*} 
w_1 &=  
\delta_1\dots\delta_{j-1}t_j(\delta_{j+1}\dots \delta_{k-1})t_k
                          (\delta_{k+1}\dots \delta_{n-1})g_n \\
    &= 
\delta_1\dots\delta_{j-1}t_j(\delta_{k+1}\dots \delta_{n-1})
                          (\delta_{j+1}\dots \delta_{k-1})t_kg_n \\
    &= 
\delta_1\dots\delta_{j-1}t_j(\delta_{k+1}\dots \delta_{n-1})
                          (\delta_{j+1}\dots \delta_{k-1})g_{n+1}t_k \\ 
    &= 
\delta_1\dots\delta_{j-1}t_j(\delta_{k+1}\dots \delta_{n-1})g_{n+1} 
                          (\delta_{j+1} \dots \delta_{k-1})t_k. \\  
\intertext{Now applying a circular transformation we obtain}
    &\sim  
(\delta_{j+1} \dots \delta_{k-1})t_k \delta_1\dots\delta_{j-1}t_j
(\delta_{k+1}\dots \delta_{n-1})g_{n+1}\\  
\intertext{and using again the tent commuting equalities we get} 
    &=
\delta_1\dots\delta_{j-1}t_j(\delta_{j+1}\dots \delta_{k-1})t_k
                          (\delta_{k+1}\dots \delta_{n-1})g_{n+1} 
    = w_2  
\end{align*} 
as desired. 
\end{proof}

The following key theorem is the main result of this section.

\begin{theorem}[Ordering Theorem]\label{T:ordering-theorem}  
Let $w$ be a word. There exists a word $\tilde w$ in echelon 
form such that $w\leadsto \tilde w$. 
\end{theorem}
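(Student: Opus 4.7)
The plan is a strong induction on $\cont(w)$ in the lexicographic well-order on $\mathbb{N}^n$; Lemma~\ref{L:admissible-reduces} ensures admissible transformations are non-increasing in content, so the induction is well-founded on the (finite) set of contents reachable from $w$. The overall idea is to peel off the leading block $\delta_1$ and then reduce the theorem, for the residual word, to an instance over a Coxeter group of strictly smaller rank.

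The key intermediate step is a \emph{peeling lemma}: given any word $w$, there exists an admissible sequence $w \leadsto \delta_1 w_1$, where $\delta_1$ satisfies condition~\eqref{E:delta-conditions} for $i=1$ and $w_1$ contains no occurrences of $g_1$. Since $g_1$ commutes with every $g_j$ for $j \ge 3$, the only obstruction to cancelling two $g_1$'s in a word is an intervening $g_2$. Hence, while $a_1$ (the count of $g_1$) exceeds the echelon bound, I would: use commuting transformations to slip two $g_1$'s toward each other past any $g_j$ with $j \ge 3$; cancel them if they become adjacent; otherwise form the pattern $g_1 g_2 g_1$ and apply the braid replacement $g_1 g_2 g_1 \mapsto g_2 g_1 g_2$, which strictly decreases $a_1$ and hence strictly decreases $\cont$ lexicographically. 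After exhausting these moves, a circular shift (combined, in types $B_n$ and $D_{n+1}$, with tent commuting) brings the remaining $g_1$'s to the left in the correct shape $\delta_1 \in \{1,g_1\}$ (in $A_n$) or $\delta_1 \in \{1,g_1,t_1\}$ (in $B_n, D_{n+1}$).

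Once the peeling lemma is in hand, I would apply the induction hypothesis to $w_1$, viewed as a word in the Coxeter subgroup $\langle g_2, g_3, \ldots\rangle$, which is of type $A_{n-1}$, $B_{n-1}$, or $D_n$ respectively. Non-circular admissible transformations in the subgroup lift literally to admissible transformations on $\delta_1 w_1$ in $G$ (including the tent commuting replacements involving $t_k$ for $k \ge 2$, which are admissible in the smaller group by inspection). Circular transformations on $w_1$ can be simulated in $G$ by composing circular shifts of the full word with the commuting and tent-commuting equalities of Remark~\ref{R:admissible-facts}(a), which allow $\delta_1$ to be interchanged with the letters it meets as it travels around the cycle. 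This gives $\delta_1 w_1 \leadsto \delta_1 \delta_2 \cdots \delta_{n+1}$ in echelon form.

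The main obstacle will be the peeling lemma in types $B_n$ and $D_{n+1}$ when the word has $a_1 = 2$ but no braid reduction applies: here one must verify via a careful case analysis that the remaining $g_1$'s can be packaged into a $t_1$-block using tent commuting, producing $\delta_1 = t_1$. A secondary challenge is the faithful simulation of subgroup circular shifts during the inductive step; this rests essentially on the identities $t_k g_j = g_j t_k$ for $k < j \le n-1$, together with the $D_{n+1}$ variants involving the swap $g_n \leftrightarrow g_{n+1}$, which dictate how $\delta_1$ commutes past the letters of $w_1$ (with the harmless side effect, in type $D_{n+1}$, of swapping $g_n$ and $g_{n+1}$, which is compatible with the two-way ambiguity already allowed by Remark~\ref{R:content-determines-echelon}).
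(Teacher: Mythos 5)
Your overall strategy (peel off the block $\delta_1$ of smallest index, then recurse on the rank-one-lower subgroup $\langle g_2,\dots\rangle$) is the same as the paper's, but the two places where you defer to future work are exactly where the proof lives, and as described both steps break. First, the peeling loop ``slip two $g_1$'s together past letters of index $\ge 3$, then cancel or form $g_1g_2g_1$ and braid'' stalls as soon as two consecutive occurrences of $g_1$ are separated by a segment containing \emph{two or more} occurrences of $g_2$ (e.g.\ $g_1\,g_2g_4g_3g_2\,g_1$): no cancellation, no braid pattern involving $g_1$, and no commuting move is available, and your strong induction on $\cont(w)$ does not by itself tell you which content-decreasing move to make. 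The missing idea is the rank-recursive step of the paper's Lemma~\ref{L:reduction-step}: writing $w=ug_1vg_1s$ with $v$ free of $g_1$, one first brings $v$ (a word in the smaller group) to the form $v'\delta_2v''$ with $v',v''$ of index $\ge 3$, after which $g_1\delta_2 g_1$ is disposed of by cancellation, by the braid replacement, or by the identity $g_1t_2g_1=t_1$ --- this last identity is also the entire mechanism behind the $t_1$-packaging you flag as ``careful case analysis'' (note too that in $D_3$ the letter $g_3$ does not commute with $g_1$, so even your opening claim about the only obstruction being $g_2$ needs care). Second, your simulation of circular shifts of $w_1$ inside $G$ is false when $\delta_1=g_1$: to turn $g_1uv$ into $g_1vu$ via a circular shift of the whole word you must move $g_1$ past $v$ (or $u$), and there is no admissible transformation interchanging $g_1$ with $g_2$; the tent-commuting identities you invoke only apply when the block being transported is a tent, not a single generator.

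The paper avoids the second problem structurally rather than by simulation: Lemma~\ref{L:reduction-step} is proved \emph{without circular transformations}, so its transformation sequence applies verbatim to the residual word $w_k$ sitting inside $\delta_1\cdots\delta_k w_k$, producing $\delta_1\cdots\delta_k w_k'\delta_{k+1}w_k''$; then only $w_k'$, whose letters all have index $>k+1$ and hence commute or tent-commute with each $\delta_j$ ($j\le k$), is moved to the front and disposed of by a top-level circular shift. If you want to keep your formulation, you would need either to prove a peeling statement with the same ``no circular transformations'' guarantee and reorganize the induction as in the paper, or to prove a strengthened inductive hypothesis in which the subgroup-level reduction uses circular shifts only of segments that can legitimately pass $\delta_1$. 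As it stands, the proposal is missing the key recursive mechanism for peeling and relies on an unavailable commutation in the inductive step.
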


The proof relies on the following lemma. 

\begin{lemma}\label{L:reduction-step} 
Let $i$ be the smallest index of a letter in \ $w$. 
Then \ $w\leadsto w'\delta_i w''$ \ where the words $w'$ and $w''$ 
can contain only letters with index $> i$, and $\delta_i$ satisfies 
\eqref{E:delta-conditions}. Furthermore, the sequence of admissible 
transformations can be chosen so that no circular transformations 
are used. 
\end{lemma}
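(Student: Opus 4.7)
The plan is to prove the lemma by strong induction on the length $|w|$, with a secondary induction on the number $k$ of occurrences of $g_i$. When $k\le 1$, writing $w=u_0g_iu_1$ with $u_0$, $u_1$ consisting of letters of index $>i$ gives the conclusion immediately, taking $\delta_i=g_i$ (or $\delta_i=1$ with $w'=\mathrm{empty}$ and $w''=w$ once we have managed to cancel out all $g_i$'s).

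For $k\ge 2$ I would locate the first two occurrences of $g_i$ and write $w=u_0g_iu_1g_iv$ with $u_0$ and $u_1$ free of $g_i$. If $u_1$ is empty, cancel $g_ig_i$ and apply the outer induction to the shorter word. Otherwise, the inductive hypothesis applied to the strictly shorter word $u_1$ yields $u_1\leadsto u_1'\delta_ju_1''$, where $j\ge i+1$ is the smallest index occurring in $u_1$. If $j\ge i+2$ (so $g_j$, and in fact every letter in $u_1'\delta_ju_1''$, commutes with $g_i$ in the generic range $i\le n-2$), commute the left $g_i$ past $u_1'\delta_ju_1''$ and cancel with the right $g_i$, reducing $|w|$ by $2$. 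Otherwise $j=i+1$: if $\delta_{i+1}=1$ we similarly cancel; if $\delta_{i+1}=g_{i+1}$, the braid $g_ig_{i+1}g_i\leadsto g_{i+1}g_ig_{i+1}$ drops the count of $g_i$ by one and the secondary induction applies; if $\delta_{i+1}=t_{i+1}$, then after commuting $u_1'$ and $u_1''$ past the flanking $g_i$'s one recognises $g_it_{i+1}g_i$ as letter-for-letter the string $t_i$, absorbing both $g_i$'s into a tent word to yield $u_0u_1't_iu_1''v$.

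In this last subcase, if $v$ contains no further $g_i$ we are finished with $\delta_i=t_i$; otherwise apply the inductive hypothesis to the strictly shorter $u_1''v$ to obtain $u_1''v\leadsto w_2'\delta_iw_2''$ and combine. When $\delta_i=1$ we finish with $\delta_i=t_i$. When $\delta_i=g_i$, the tent-commuting admissible transformations let us push $t_i$ past $w_2'$ (with the $g_n\leftrightarrow g_{n+1}$ swap when we are in $D_{n+1}$), after which the cancellation identity $t_ig_i\leadsto g_it_{i+1}$ (the terminal $g_i$ of $t_i$ cancels with the following $g_i$) leaves exactly one $g_i$ in the middle, so the answer has $\delta_i=g_i$. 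When $\delta_i=t_i$, the same maneuver brings the two $t_i$'s into contact, and then the iterated cancellation $t_it_i\leadsto 1$ (peel $g_ig_i$, then $g_{i+1}g_{i+1}$, and so on outward from the middle) removes all $g_i$'s, giving $\delta_i=1$. The terminal cases ($i=n$ in $B_n$, and $i\in\{n,n+1\}$ in $D_{n+1}$) reduce to powers of commuting generators. The special case $i=n-1$ in $D_{n+1}$, where both $g_n$ and $g_{n+1}$ are non-commuting neighbours of $g_{n-1}$ and $t_{n-1}=g_{n-1}g_ng_{n+1}g_{n-1}$, runs along the same lines but with a symmetric case analysis on $\delta_j$ for $j\in\{n,n+1\}$.

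The main obstacle is the $t_{i+1}$ subcase, which does not shorten the word and merely repackages $g_iu_1't_{i+1}u_1''g_i$ into the single block $u_1't_iu_1''$; further progress depends on the three key identities $g_it_{i+1}g_i=t_i$, $t_ig_i\leadsto g_it_{i+1}$, and $t_it_i\leadsto 1$, each of which must be realised by an explicit sequence of cancelling and tent-commuting admissible transformations (and crucially without any circular transformation), after which the final bookkeeping assembles into the desired form $w'\delta_iw''$.
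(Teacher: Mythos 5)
Your overall strategy is essentially the paper's: split $w=u_0g_iu_1g_iv$ at the first two occurrences of the lowest-index letter, recursively bring $u_1$ to the form $u_1'\delta_{i+1}u_1''$, pull the flanking $g_i$'s inward past letters of higher index, and case on $\delta_{i+1}$ using cancellation, the braid move, or the letter-for-letter identity $g_it_{i+1}g_i=t_i$; then dispose of $t_i\delta_i$ via $t_ig_i\leadsto g_it_{i+1}$ and $t_it_i\leadsto 1$ (the paper cancels only the middle pair of $t_it_i$ and re-inducts on the number of occurrences, a cosmetic difference), all without circular moves, so the recursive calls can be applied inside the ambient word. Your induction bookkeeping differs — you induct on the pair (length of $w$, number of occurrences of $g_i$), while the paper inducts on the rank $n$ (reducing to explicit base cases $A_2$, $B_2$, $D_3$) together with the number of occurrences of $g_1$ — but both schemes are sound.

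The concrete gap is type $B_n$ with $i=n-1$ (in particular all of $B_2$). There your subcase ``$\delta_{i+1}=g_{i+1}$: apply the braid $g_ig_{i+1}g_i\leadsto g_{i+1}g_ig_{i+1}$'' invokes a transformation that is not admissible: the replacement $g_{n-1}g_ng_{n-1}\mapsto g_ng_{n-1}g_n$ is listed only for $A_n$ and $D_{n+1}$, and in $B_n$ these are genuinely different group elements since $m_{n-1,n}=4$. You hedge the commuting claims with ``generic range $i\le n-2$'' and you single out $i=n-1$ in $D_{n+1}$ as special, but $i=n-1$ in $B_n$ is never addressed, and as written the braid step would fail there. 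The fix uses only tools already in your proposal: in $B_n$ the string $g_{n-1}g_ng_{n-1}$ \emph{is} the tent $t_{n-1}$, so this subcase should be folded into your tent subcase (push $t_{n-1}$ rightward by tent-commuting, then $t_{n-1}g_{n-1}\leadsto g_{n-1}g_n$ or $t_{n-1}t_{n-1}\leadsto 1$); this is exactly what the paper's explicit $B_2$ base case does. Your $D_{n+1}$, $i=n-1$ case is only sketched, but the sketch is along the right lines (the subword between consecutive $g_{n-1}$'s reduces to one of $1$, $g_n$, $g_{n+1}$, $g_ng_{n+1}$, leading to a cancellation, a braid, or the tent $t_{n-1}$), matching the paper's $D_3$ analysis.
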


\begin{proof} 
The case when $i\ge n$ is trivial, so we may assume that $i\le n-1$,  
hence without loss of generality also that $i=1$. If $a_1=1$ then 
no transformations are needed, so we may assume that $a_1\ge 2$. Thus  
we have $w= ug_1vg_1s$ where $u$ and $v$ do not involve $g_1$.  

Consider now the case $n=2$. We prove this case by induction on $a_1$.   
Suppose first that $a_1=2$. Then $s$ does not involve $g_1$.  
Since $v$ can involve only $g_2$ and/or $g_3$, 
a sequence of commuting and/or cancelling transformations 
yields $w\leadsto ug_1g_2g_1s$ or, in case of $D_3$, 
either $w\leadsto ug_1g_2g_3g_1s$ or $w\leadsto ug_1g_3g_1s$. 
When $G=A_2$ or $G=D_3$  a replacement transformation gives 
$ug_1g_2g_1s\leadsto ug_2g_1g_2s$ which is in the desired form. 
When $G=B_2$ we already have $ug_1g_2g_1s=ut_1 s$ is in the 
desired form. When $G=D_3$ then $ug_1g_2g_3g_1s=ut_1s$ is also 
in the desired form, and a replacement transformation yields 
$ug_1g_3g_1s\leadsto ug_3g_1g_3s$ which is also in the desired form.    
Suppose next that $a_1\ge 3$. Then $s=s'g_1s''$ where $s''$ does 
not involve $g_1$. By induction we have $ug_1vg_1s'\leadsto u'\delta_1v'$ 
where $u'$ and $v'$ do not involve $g_1$ and 
no circular transformations were used, hence the same 
sequence of transformations yields $w\leadsto u'\delta_1 v'g_1s''$. 
Since $v'$ can involve only $g_2$ and/or $g_3$, 
a sequence of cancelling and/or commuting transformations yields 
$w\leadsto u'\delta_1g_2g_1s''$ or, in case of $D_3$, also either 
$w\leadsto u'\delta_1g_2g_3g_1s''$ or $w\leadsto u'\delta_1g_3g_1s''$. 
In case $A_2$ we have $\delta_1=g_1$ hence  a replacement transformation 
brings us to $w\leadsto u'g_2 g_1 g_2 s''$ which is in the desired form. 
In case $B_2$ and $\delta_1=g_1$ then we are already have 
$u'g_1g_2g_1s''=u't_1s''$ which is in the desired form. 
In case $B_2$ and $\delta_1=t_1=g_1g_2g_1$ 
then a tent commuting transformation followed by cancelling yields  
$
u't_1g_2g_1s'' \leadsto 
u'g_2t_1g_1s'' = u'g_2g_1g_2g_1g_1s'' \leadsto 
u'g_2g_1g_2s''
$ 
which is again in the desired form. 
In the case $D_3$ and $\delta_1=g_1$ we handle the possibilities   
$u'g_1g_2g_1s''$ and $u'g_1g_3g_1s''$ just as in the case $A_2$, and 
the possibility $u'g_1g_2g_3g_1s''=u't_1s''$ is already in the desired form. 
Finally, we look at the case $D_3$ and 
$\delta_1=t_1=g_1g_2g_3g_1$. Then using tent commuting transformations yields 
$u't_1v'g_1s'' \leadsto u'v''t_1g_1s''$ where $v''$ is obtained by exchanging  
the letters $g_2$ and $g_3$ in $v'$. Then, a cancelling transformation 
produces $u'v''t_1g_1s''=u'v''g_1g_2g_3g_1g_1s''\leadsto u'v''g_1g_2g_3s''$ 
which is again in the desired form and completes the proof of the case $n=2$.  
  
Next, suppose $n\ge 3$. 
By induction on $n$ we have that $v\leadsto v'\delta_2 v''$ where 
$v'$ and $v''$ can only involve letters with index $3$ or higher, and 
no circular transformations are used.   
Therefore the 
same sequence of admissible transformations yields 
$w \leadsto ug_1v'\delta_2 v''g_1t$. Since $g_1$ commutes with all 
the letters appearing in $v'$ and $v''$, a sequence of commuting 
transformations yields $w\leadsto uv'g_1\delta_2 g_1v''t$. 
If $\delta_2=1$ then a cancelling transformation yields 
$w\leadsto uv'v''t$; if $\delta_2=g_2$ then a replacement transformation 
gives $w\leadsto uv'g_2g_1g_2v''t$; and if $\delta_2=t_2$ then we 
have $uv'g_1t_2g_1v''t = uv't_1v''t$. 
If $a_1=2$ then all three cases above are in the desired form. 
If $a_1\ge 3$ then the first two cases have a smaller $a_1$ and in 
them we are done by induction on $a_1$. In the last case, induction 
on $a_1$ gives $v''t\leadsto z'\delta_1z''$ where $z'$ and $z''$ do 
not involve $g_1$, and no circular transformations are used. Therefore 
the same sequence of transformations yields 
$w\leadsto uv't_1z'\delta_1 z''$, hence a sequence of 
tent commuting replacement 
transformations produces $w\leadsto u''t_1\delta_1z''$. Now, if 
$\delta_1=g_1$ then $t_1\delta_1=g_1t_2g_1g_1$ hence a cancelling 
transformation yields $w\leadsto u''g_1t_2z''$ which is in the 
desired form. Finally, if $\delta_1=t_1$ we get 
$t_1\delta_1=g_1t_2g_1g_1t_2g_1$ hence a cancelling transformation 
yields $w\leadsto u''g_1t_2t_2g_1z''$ which has smaller $a_1$ hence 
we are done by induction on $a_1$ in this case as well.  
This completes the proof of the lemma.   
\end{proof}

\begin{proof}[Proof of Theorem~\ref{T:ordering-theorem}] 
Let $i_1$ be the smallest index of a letter in $w$. 
By Lemma~\ref{L:reduction-step} we get $w\leadsto w'\delta_{i_1}w''$ 
hence a circular trasnformation produces 
$w\leadsto \delta_{i_1}w''w' = \delta_1\dots\delta_{i_1}w_{i_1}$ where 
$w_{i_1}$ only involves letters with index $> i_1$. Suppose for some $k\le n-1$ 
we have already obtained $w\leadsto \delta_1\dots\delta_kw_k$ where 
$w_k$ can only involve letters with index $>k$. If $k=n-1$ then the  
only possible letters involved in $w_{n-1}$ are $g_n$ and $g_{n+1}$, so 
we are done after a sequence of commuting and/or cancelling transformations.    
If $k< n-1$ then by Lemma~\ref{L:reduction-step} we have 
$w_k\leadsto w_k'\delta_{k+1}w_k''$ where no circular transformations 
are used, and $w_k'$ and $w_k''$ can involve only letters with index $> k+1$. 
Thus the same sequence of transformations yields 
$w\leadsto \delta_1\dots\delta_kw_k'\delta_{k+1}w_k''$. 
Therefore a sequence of commuting and/or tent commuting transformations 
produces $w\leadsto z\delta_1\dots\delta_{k+1}w_k''$ where $z$ can involve 
only letters with index $>k+1$. Now a circular transformation gives 
$w\leadsto \delta_1\dots\delta_{k+1}w_{k+1}$ where $w_{k+1}=w_k''z$ can 
involve only letters of index $> k+1$. Iterating this process we 
arrive at the desired conclusion. 
\end{proof}

\section{The proof of Theorem~\ref{main-theorem} }\label{main theorem} 
 
\vspace{.2cm} 

We are now ready to present the proof of our second main result.

\begin{proof}[Proof of Theorem~\ref{main-theorem}] 
By Theorem~\ref{unitary} we may assume that both $\rho_1$ and $\rho_2$ 
are unitary. Also, the equality $D(T,\rho_1)=D(T,\rho_2)$ implies 
$\dim V_1=\deg D(T,\rho_1)=\deg D(T,\rho_2)=\dim V_2$, and 
\begin{equation}\label{equal spectra}
\sigma_p\bigl(\rho_1(g_1),\dots,\rho_1(g_n)\bigr)=
\sigma_p\bigl(\rho_2(g_1),\dots,\rho_2(g_n)\bigr).
\end{equation} 
In particular, the case when $G$ is finite of type $I(n)$ is an 
immediate consequence from Theorem~\ref{T:main-theorem-dihedral}. 

In the remaining cases for $G$ it suffices by 
Theorem~\ref{character determines representation} 
to show that the equality of subschemes 
$D(T,\rho_1)=D(T,\rho_2)$ implies  
that the characters 
$\chi_{\rho_1}$ and $\chi_{\rho_2}$ are the same.

Let $N=\dim V_1=\dim V_2$. 
For each $i$ let $A_i=\rho_1(g_i)$ and let $B_i=\rho_2(g_i)$. 
For every $(x_1,\dots, x_n)\in\C^n$ the characteristic polynomials 
for the operators 
$x_1A_1+\dots+ x_nA_n$ and $x_1B_1+\dots+ x_nB_n$ are 
equal. Therefore   
the spectra of 
$x_1A_1+\dots+x_nA_n$ and $x_1B_1+\dots+x_nB_n$ are the same counting 
multiplicities. This implies that the traces of these operators are 
the same, in other words
$$
x_1\trace(A_1)+\dots+x_n\trace(A_n)	= 
x_1\trace(B_1)+\dots+x_n\trace(B_n).
$$
Since this is true for all $x_1,\dots,x_n$ we obtain
\begin{equation}\label{traces1}
\trace(A_j)	= \trace(B_j), \ j=1,\dots,n.
\end{equation}
Similarly, since for each $m$ the eigenvalues of $(x_1A_1+\dots+x_nA_n)^m$ 
are just the $m$th powers of the eigenvalues of $x_1A_1+\dots+x_nA_n$, we get 
that  
for every $m\in {\mathbb N}$ the spectra of $(x_1A_1+\dots+x_nA_n)^m$ and $(x_1B_1+\dots+x_nB_n)^m$ are the same counting multiplicities.  %
Since this is true for arbitrary $(x_1,\dots,x_n)\in \C^n$, we obtain 
that for every $m$ and $\alpha=(a_1,\dots,a_n)\in {\mathbb N}^n$ with 
$a_1+\dots+a_n=m$ one has 
\begin{equation}\label{traces2}
\sum_{\sign(u)=\alpha} \trace\bigl(\rho_1(u)\bigr) =  
\sum_{\sign(u)=\alpha} \trace\bigl(\rho_2(u)\bigr) 
\end{equation}

To complete the proof it is enough 
to show by induction on the partial ordering of words 
that for each word $w$ in $G$ we have 
\[
\trace\bigl(\rho_1(w)\bigr) = \trace\bigl(\rho_2(w)\bigr). 
\]
When $w=1$ this statement is trivial since both sides equal $N$. 
Suppose $w\ne 1$ and we have proved our statement for all words 
$< w$. Let $m=|w|$ and $\sign(w)=\alpha=(a_1,a_2,\dots )$. 
For each word $u$ with 
$\sign(u)=\alpha$ let $\tilde u$ be a word in echelon form 
such that $u\leadsto \tilde u$, as per the Ordering Theorem. 
Thus, by Lemma~\ref{L:admissible-reduces} we have 
$\cont(w)=\cont(u)\ge_{lex}\cont(\tilde u)$. 
Let 
\[
M=\bigl|\{u \mid \sign(u)=\alpha\text{ and } 
                 \cont(\tilde u)=\cont(w) \}\bigr|.
\] 
Since $u$ and $\tilde u$ belong to the same conjugacy class in $G$, 
we can rewrite equation \eqref{traces2} as follows: 
\begin{multline}\label{E:multline} 
\sum_{\substack{
      \sign(u)=\alpha \\ 
      \tilde u< w}}
\trace\bigl(\rho_1(\tilde u)\bigr) 
\quad + 
\sum_{\substack{
      \sign(u)=\alpha \\ 
      \cont(\tilde u)=\cont(w)}}
\trace\bigl(\rho_1(\tilde u)\bigr)                  \\
=
\sum_{\substack{
      \sign(u)=\alpha \\ 
      \tilde u < w}} 
\trace\bigl(\rho_2(\tilde u)\bigr) 
\quad + 
\sum_{\substack{
      \sign(u)=\alpha \\ 
      \cont(\tilde u) = \cont(w)}} 
\trace\bigl(\rho_2(\tilde u)\bigr). 
\end{multline} 
If $\tilde w< w$ then by induction hypothesis we get 
\[
\trace\bigl(\rho_1(w)\bigr) = 
\trace\bigl(\rho_1(\tilde w)\bigr)=
\trace\bigl(\rho_2(\tilde w)\bigr)= 
\trace\bigl(\rho_2(w)\bigr),
\] 
so we may assume that $\cont(w)=\cont(\tilde w)$.  
Also, note that, in type $D_{n+1}$, if 
$a_n=0$ (resp. $a_{n+1}=0$) and $\sign(u)=\alpha$, then 
by the nature of the possible admissible transformations, 
$\tilde u$ does not involve $g_n$ (resp. $g_{n+1}$).  
Therefore, in view of Remark~\ref{R:content-determines-echelon} 
and Proposition~\ref{P:echelon-conjugates}, 
in all possible cases for $u$ with  
$\sign(u)=\alpha$ and $\cont(\tilde u)=\cont(w)$ we must have 
$\tilde u$ in the same conjugacy class as $w$. Therefore
$\trace\bigl(\rho_i(\tilde u)\bigr)=\trace\bigl(\rho_i(w)\bigr)$ 
hence 
\[
\sum_{\substack{
      \sign(u)=\alpha \\ 
      \cont(\tilde u)=\cont(w)}}
\trace\bigl(\rho_i(\tilde u)\bigr)   
=M\trace\bigl(\rho_i(w)\bigr)
\]  
for $i=1,2$. Since by the induction hypothesis 
$
\trace\bigl(\rho_1(\tilde u)\bigr)=
\trace\bigl(\rho_2(\tilde u)\bigr)
$ 
whenever $\tilde u < w$, equation \eqref{E:multline} reduces to 
\[
M\trace\bigl(\rho_1(w)\bigr)=M\trace\bigl(\rho_2(w)\bigr)
\]
and the desired conclusion is immediate. 
\end{proof}

We also note the following result which was obtained during the  
course of establishing equality \eqref{traces2} in the proof above.

\begin{theorem}\label{traces3}
Let $G$ be any finitely generated group 
with generators $g_1,\dots,g_n$, 
and let $\rho_1$ and $\rho_2$ be two finite dimensional 
representations of $G$. 
If the equality \eqref{equal spectra} holds, then for every 
$(k_1,\dots,k_n)\in {\mathbb N}^n$ we have 
\begin{equation}\label{character1}
\sum_{\sign(w)=(k_1,\dots,k_n) }\chi_{\rho_1}(w) 
\quad = \quad 
\sum_{\sign(w)=(k_1,\dots,k_n)} \chi_{\rho_2}(w),	 
\end{equation}
where $w$ denotes a word in the alphabet given by the set 
$\{g_1,\dots, g_n\}$.
\end{theorem}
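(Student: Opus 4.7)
My plan is to isolate the portion of the proof of Theorem~\ref{main-theorem} that established \eqref{traces2}; the combinatorial machinery of Sections~\ref{c-length}--\ref{S:echelon-forms} is needed only to pry apart individual conjugacy class contributions, whereas the summed identity \eqref{character1} is itself group-independent and falls out of purely spectral considerations. In particular, no Coxeter structure is invoked in the steps below.

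Concretely, I would set $A_i=\rho_1(g_i)$, $B_i=\rho_2(g_i)$ and $N=\dim V_1=\dim V_2$. The scheme-theoretic equality \eqref{equal spectra} forces the defining polynomials $\det(-I+\sum_i x_iA_i)$ and $\det(-I+\sum_i x_iB_i)$ of the two proper joint spectra to coincide up to a nonzero scalar; evaluating at the origin gives $(-1)^N$ on both sides, so the scalar is $1$ and the polynomials are literally equal in $\C[x_1,\dots,x_n]$. Rescaling $(x_1,\dots,x_n)\mapsto (tx_1,\dots,tx_n)$ and reading the resulting polynomial identity in $t$ then yields equality of the full characteristic polynomials of $A(x)=\sum_i x_iA_i$ and $B(x)=\sum_i x_iB_i$ for every fixed $(x_1,\dots,x_n)\in\C^n$, hence equality of their spectra counting multiplicities.

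Next, Proposition~\ref{P:spectral-facts}(a) applied to $p(z)=z^m$ shows that $A(x)^m$ and $B(x)^m$ also share the same spectrum counting multiplicities for every $m\ge 0$, so $\trace\bigl(A(x)^m\bigr)=\trace\bigl(B(x)^m\bigr)$ as polynomials in $(x_1,\dots,x_n)$. The closing step is bookkeeping: expand
\begin{equation*}
A(x)^m=\sum_{(i_1,\dots,i_m)} x_{i_1}\cdots x_{i_m}\,\rho_1(g_{i_1}\cdots g_{i_m}),
\end{equation*}
and likewise for $B(x)^m$; take traces; and note that the coefficient of the monomial $x_1^{k_1}\cdots x_n^{k_n}$ on either side is precisely $\sum_{\sign(w)=(k_1,\dots,k_n)}\chi_{\rho_j}(w)$. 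Comparing coefficients and then varying $m\in\mathbb N$ to sweep over all tuples $(k_1,\dots,k_n)$ delivers \eqref{character1}. There is no substantive obstacle; the only care required is the passage from the scheme-theoretic equality to equality of characteristic polynomials, which hinges on the normalizing constant $(-1)^N$ coming out the same on both sides.
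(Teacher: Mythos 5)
Your proposal is correct and follows essentially the same route as the paper: the paper obtains Theorem~\ref{traces3} exactly as the portion of the proof of Theorem~\ref{main-theorem} that establishes \eqref{traces2}, namely equality of the characteristic polynomials of $x_1A_1+\dots+x_nA_n$ and $x_1B_1+\dots+x_nB_n$, hence equality of spectra (with multiplicity) of all their powers, hence equality of $\trace\bigl(A(x)^m\bigr)$ and $\trace\bigl(B(x)^m\bigr)$ as polynomials in $x$, followed by comparing monomial coefficients. Your added remarks on normalizing the defining polynomials via the value $(-1)^N$ at the origin and on recovering the characteristic polynomial by the rescaling $x\mapsto tx$ only make explicit what the paper leaves implicit.
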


\section{Example: $\widetilde{C}_2$}\label{affine c}

Recall that $\widetilde{C}_2$ is the Coxeter group with three generators $b_1,b_2,b_3$ that satisfy the relations: 
\begin{equation}\label{relations} 
b_j^2=1, \ j=1,2,3; \quad (b_1b_2)^4=(b_2b_3)^4=1; \quad b_1b_3=b_3b_1. 
\end{equation} 
It is well known that this group is \emph{affine}, that is, 
it contains an abelian 
normal subgroup such that the quotient group is finite. 
In this particular case it is easy to see that this normal 
abelian subgroup is the following: let 
$$
r_1=b_1b_2b_3b_2, \quad  r_2=b_2b_1b_2b_3.
$$ 
Let us check that the group generated by $r_1$ and $r_2$ is an abelian 
normal subgroup of $\widetilde{C}_2$. First we observe that the relations
\eqref{relations} imply 
\begin{equation}\label{br}
\begin{split} 
b_1r_1&=r_1^{-1}b_1 \\ 
b_2r_1&=r_2b_2,    \\ 
b_3r_1&=r_1b_3,    \\ 
b_1r_2&=r_2b_1      \\ 
b_2r_2&=r_1b_2,     \\ 
b_3r_2&=r_2^{-1}b_3  \\ 
b_3r_2^{-1}&=r_2b_3  
\end{split}
\end{equation} 
 
We now can use these last relations to establish that 
$r_1$ and $r_2$ commute: 
$$
r_1r_2=r_1b_2b_1b_2b_3=b_2r_2b_1b_2b_3=b_2b_1r_2b_2b_3
=b_2b_1b_2r_1b_3=b_2b_1b_2b_3r_1=r_2r_1.
$$ 
Now, relations \eqref{br} imply that 
\begin{align*} 
b_1r_1b_1&=r_1^{-1} \\ 
b_2r_1b_2&=r_2 \\ 
b_3r_1b_3&=r_1 \\ 
b_1r_2b_1&=r_2 \\ 
b_2r_2b_2&=r_1 \\ 
b_3r_2b_3&=r_2^{-1} 
\end{align*} 
These last relations show that the subgroup ${\mathcal N}$ of 
$\widetilde{C}_2$ 
generated by $r_1$ and $r_2$ is a normal subgroup.  
We further remark that $b_1=b_2b_3b_2r_1^{-1}$, and, therefore, the cosets of $b_2$ and $b_3$ generate $\widetilde{C}_2/{\mathcal N}$. 
We will now show: 

\begin{theorem}
If for two finite dimensional unitary representations 
$\rho_1$ and $\rho_2$ of the affine Coxeter group 
$\widetilde{C}_2$ we have 
\begin{multline*}
\sigma_p
\bigl(
\rho_1(b_2),\rho_1(b_3),
\rho_1(r_1),\rho_1(r_2),\rho_1(r_1^{-1}),\rho_1(r_2^{-1})
\bigr)
\\ 
= \sigma_p
\bigl(
\rho_2(b_2),\rho_2(b_3),
\rho_2(r_1),\rho_2(r_2),\rho_2(r_1^{-1}),\rho_2(r_2^{-1})
\bigr), 
\end{multline*}
then for these representations 
$\chi_{\rho_1}=\chi_{\rho_2}$. 
\end{theorem}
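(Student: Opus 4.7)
The plan is to follow the pattern established in the proof of Theorem~\ref{main-theorem}, now with the alphabet $T=\{b_2,b_3,r_1,r_2,r_1^{-1},r_2^{-1}\}$ and with $\tilde C_2$ viewed as a semidirect product $\mathcal N\rtimes W_0$, where $\mathcal N=\langle r_1,r_2\rangle\cong\Z^2$ is abelian and $W_0=\langle b_2,b_3\rangle$ is the dihedral group of order $8$ (since $(b_2b_3)^4=1$). First I would invoke Theorem~\ref{traces3} to obtain, for every signature $\alpha=(k_2,k_3,k_{r_1},k_{r_2},k_{r_1^{-1}},k_{r_2^{-1}})\in\mathbb N^6$, the identity
\[
\sum_{\sign(w)=\alpha}\chi_{\rho_1}(w)\quad=\quad \sum_{\sign(w)=\alpha}\chi_{\rho_2}(w),
\]
which will be the sole external input. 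The rest of the proof is a combinatorial manipulation of words in $T$ to exploit these identities.

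Next I would set up an ``admissible transformations'' calculus tailored to $T$. The basic moves are: cancellations $b_j b_j\to 1$, $r_i r_i^{-1}\to 1$; commutations $r_ir_j\leftrightarrow r_jr_i$ and $b_3r_1\leftrightarrow r_1 b_3$; the conjugation swaps coming directly from~\eqref{br}, namely $b_2r_1\mapsto r_2b_2$, $b_2r_2\mapsto r_1b_2$, $b_3r_2\mapsto r_2^{-1}b_3$, $b_3r_2^{-1}\mapsto r_2b_3$; circular transformations; and the dihedral braid relation $(b_2b_3)^4=1$. Using the swaps repeatedly, any word in $T$ can be rewritten (without changing its underlying group element) in the form $r_1^{a}r_2^{c}v$, where $v\in\{b_2,b_3\}^*$. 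Then applying circular shifts and the braid relation to $v$, I would define an \emph{echelon form} $r_1^{a}r_2^{c}\tilde v$ in which $\tilde v$ is drawn from a short list of canonical dihedral representatives (the identity, a single generator, or one of the standard powers of $b_2b_3$). I would introduce a partial order on words by length first and then by a chosen lexicographic order on $(a,c,\tilde v)$, and verify that every admissible transformation is non-increasing in this order (just as Lemma~\ref{L:admissible-reduces} in the Coxeter case).

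Then the core argument is induction on this partial order, parallel to the proof of Theorem~\ref{main-theorem}. For a word $w$, the displayed character-sum identity for $\alpha=\sign(w)$ splits into a sum of terms involving echelons strictly below $w$ (handled by inductive hypothesis) plus a sum of terms whose echelons lie in the \emph{same} conjugacy class as $w$. For this last step I need to verify the analogue of Proposition~\ref{P:echelon-conjugates}: namely, that all $w'$ with $\sign(w')=\alpha$ whose echelon $r_1^{a'}r_2^{c'}\tilde v'$ has the same ``top data'' as that of $w$ must be conjugate to $w$ in $\tilde C_2$. Using the semidirect product description and tracking how the $W_0$-action conjugates elements of $\mathcal N$, these echelons all land in a single $W_0$-orbit and hence in one conjugacy class. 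Consequently the identity collapses to $M\chi_{\rho_1}(w)=M\chi_{\rho_2}(w)$ with $M\ge 1$, and the induction closes.

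The main obstacle will be the last verification. Unlike in the finite Coxeter setting, $\mathcal N\cong\Z^2$ gives infinitely many conjugacy classes even inside the abelian kernel, and the swap rules \eqref{br} do not merely permute the $r_i$ but also flip $r_2\leftrightarrow r_2^{-1}$; as a result, a single signature $\alpha$ corresponds not to one exponent pair but to an entire $W_0$-orbit of exponent pairs $(a,c)$. I therefore expect the delicate point to be proving that the echelon form is well defined up to conjugacy, i.e.\ that the choice of order in which one applies the swaps in~\eqref{br} does not move the resulting echelon out of its $\tilde C_2$-conjugacy class. Once that orbit analysis on $\mathcal N$ is in place, the inductive recursion runs exactly as in Section~\ref{main theorem}.
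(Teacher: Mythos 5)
Your outline does follow the same route as the paper: the per-signature character identities (the analogue of Theorem~\ref{traces3} for the six-tuple), reduction of each word to a canonical conjugate, and an induction in which each identity is meant to collapse to $M\chi_{\rho_1}(w)=M\chi_{\rho_2}(w)$. The difficulty is that the step you defer as ``the delicate point'' is not a verification to be done later --- it is essentially the whole proof, and in the form you set it up the collapse mechanism fails. Your echelons $r_1^{a}r_2^{c}\tilde v$ with unconstrained exponents are not conjugacy representatives: conjugating an element whose dihedral part is $s$ by a translation $u\in{\mathcal N}$ changes its ${\mathcal N}$-part by $(1-s)u$, so the exponent pair is an invariant only modulo $(1-s){\mathcal N}$; it is this translation contribution, not the $W_0$-twisting you invoke, that governs the classes (and makes, say, $(b_2b_3)^2r_1^{k}r_2^{l}$ fall into finitely many classes), so ``landing in a single $W_0$-orbit'' is the wrong criterion. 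Concretely, let $w=b_2b_3b_2b_3r_1r_2$ and take the signature $(2,2,1,1,0,0)$. The word $r_1b_2r_2b_3b_2b_3$ has this signature and equals $r_1^{2}(b_2b_3)^2$ as a group element, so its echelon is $r_1^{2}(b_2b_3)^2$: this has the same length as $w$'s echelon and a lexicographically larger exponent pair, so it is not covered by your inductive hypothesis, and nothing in your setup shows it is conjugate to $w$ --- in fact it is conjugate to the shorter element $(b_2b_3)^2$, but seeing that already requires the nontrivial reduction $w_4(2,0)\sim w_4(0,0)$, i.e.\ the paper's \eqref{reduction}. Thus the dichotomy ``strictly smaller, or same top data hence conjugate to $w$'' does not hold for the echelon form and order you propose. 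A second structural problem: the swaps in \eqref{br} are signature-changing ($b_3r_2=r_2^{-1}b_3$ creates inverse letters; the exponent $t_1-t_2+t_3$ arising in \eqref{8} can be negative), so there is no analogue of Lemma~\ref{L:admissible-reduces} for contents, and the finite-type bookkeeping does not transfer formally.

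What actually makes the induction close are the explicit conjugacy reductions the paper proves and your sketch omits: $w_3(m_1,m_2)\sim b_2b_3r_1^{m_1+m_2}$ (equation \eqref{8}), the relations \eqref{15}, \eqref{interchange}, \eqref{reduction}, \eqref{reduction1} giving $w_4(k,l)\sim w_4(l,k)\sim w_4(k-2,l)\sim w_4(k,l-2)$, and the case-by-case identification, for each of the five dihedral parts $1,b_2,b_3,b_2b_3,(b_2b_3)^2$ and each relevant signature, of which classes the surviving terms occupy; the induction then runs on $|m_1|+|m_2|$ within each case, anchored by $\chi_{\rho_i}(b_2b_3)$ and $\chi_{\rho_i}\bigl((b_2b_3)^2\bigr)$, which the paper obtains by intersecting the joint spectrum with $\{x_3=\dots=x_6=0\}$ and invoking the dihedral results of Section~\ref{dihedral2}. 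These reductions can indeed be derived from the basic moves you list (circular shifts plus \eqref{br} plus cancellation), but they do not follow from the abstract echelon-plus-monotone-order framework; producing them, and proving that within each signature every term is either conjugate to an already-handled class or to the target word, is precisely the content of the paper's argument. As it stands your proposal has the right skeleton but a genuine gap exactly where the proof's substance lies.
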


\begin{proof} 
First we note that the subgroup generated by $b_2$ and $b_3$ is the dihedral group $I(4)=B_2$. The only words in echelon form here are  
$1$, $b_2$, $b_3$, $b_2b_3$, $b_2b_3b_2$, and $b_2b_3b_2b_3$. 
Note that $b_3$ and $b_2b_3b_2$ belong to the same conjugacy class. 
Since every element of $G$ can be written as $wr_1^{m_1}r_2^{m_2}$ for 
some word $w$ in $b_2$ and $b_3$, 
formulae \eqref{br} and 
Theorem~\ref{T:ordering-theorem} imply that   every element in 
$\widetilde{C}_2$ has in its conjugacy class a word in the form 
$$
b_2^{k_1}b_3^{l_1}b_2^{k_2}b_3^{k_2}r_1^{m_1}r_2^{m_2}
$$ 
where $k_1,k_2,l_1\in\{0,1\}, \ k_1\geq k_2, \ l_1\geq k_2$, 
and $m_1,m_2 \in {\mathbb Z}$. 
In other words, each word in $\widetilde{C}_2$ has in its conjugacy class 
one of the words 
\begin{align*}
               w_0(m_1,m_2) &=r_1^{m_1}r_2^{m_2} , \\ 
               w_1(m_1,m_2) &=b_2r_1^{m_1}r_2^{m_2}, \\ 
               w_2(m_1,m_2) &=b_3r_1^{m_1}r_2^{m_2}, \\ 
               w_3(m_1,m_2) &= b_2b_3r_1^{m_1}r_2^{m_2}, 
\text{\quad\quad or\quad}\\  
               w_4(m_1,m_2) &=b_2b_3b_2b_3r_1^{m_1}r_2^{m_2}. 
\end{align*} 
Thus, we are to show that if 
\begin{multline*} 
\sigma\bigl(-I,\rho_1(b_2),\rho_1(b_3),\rho_1(r_1),\rho_1(r_2),
       \rho_1(r_1^{-1}),\rho_1(r_2^{-1})\bigr) \\ 
= \sigma\bigl(-I,\rho_2(b_2),\rho_2(b_3),\rho_2(r_1),\rho_2(r_2),
         \rho_2(r_1^{-1}),\rho_2(r_2^{-1})\bigr), 
\end{multline*} 
then for every $m_1,m_2\in {\mathbb Z}$ and every $j=0,1,2,3,4$ 
\begin{equation}\label{main-equality}
\chi_{\rho_1}\bigl(w_j(m_1,m_2)\bigr)=\chi_{\rho_2}\bigl(w_j(m_1,m_2)\bigr). 
\end{equation}

A similar argument to the one in Theorem~\ref{traces3} shows that 
for every $\eta\in\mathbb N^6$ with 
$\eta=(n_1,n_2,n_3,n_4,n_5,n_6)$ 
\begin{equation}\label{sum_characters}	 
\sum_{\sign(w)=\eta} \chi_{\rho_1}(w)=\sum_{\sign(w)=\eta} \chi_{\rho_2}(w). 
\end{equation} 
where the sums are taken over all words comprised of 
$n_1$ \ $b_2$-s, $n_2$ \ $b_3$-s, $n_3$ \ $r_1$-s, 
$n_4$ \ $r_2$-s, $n_5$ \ $r_1^{-1}$-s, and $n_6$ \ $r_2^{-1}$-s. 
 
In particular, using circular transformation if necessary, 
we see that in the cases $n_1=1, n_2=n_5=n_6=0$ all terms 
in each of the sums 
\eqref{sum_characters} are the same. Therefore, 
for all $m_1,m_2\geq 0$ 
\begin{equation}\label{characters1}
\chi_{\rho_1}\bigl(w_1(m_1,m_2)\bigr)=\chi_{\rho_2}\bigl(w_1(m_1,m_2)\bigr).
\end{equation} 
A similar argument shows that the same equality holds for 
$m_1\geq 0, m_2\leq 0$, \ $m_1\leq 0, m_2 \geq 0$, \ $m_1,m_2 \leq 0$, 
and, hence, 
\eqref{characters1} holds for all $m_1,m_2\in {\mathbb Z}$. 
 
Applying the same reasoning to the case $n_1=0, n_2=1$ we obtain 
\begin{equation}\label{characters2} 
\chi_{\rho_1}\bigl(w_j(m_1,m_2)\bigr)=\chi_{\rho_2}\bigl(w_j(m_1,m_2)\bigr). 
\end{equation} 
holds for $j=2$ as well. 
 
Let us show that \eqref{characters2} holds for $j=3$. We will prove it by induction in $s=|m_1|+|m_2|$. If $s=0$, \ $w_3(0,0)$ belongs to the subgroup generated by $b_2$ and $b_3$. Since the joint spectrum of $\rho_j(b_2)$, \ $\rho_j(b_3)$ and the identity is the intersection of 
$
\sigma\bigl(-I,\rho_1(b_2),\rho_1(b_3),\rho_1(r_1),\rho_1(r_2),
       \rho_1(r_1^{-1}),\rho_1(r_2^{-1})\bigr) 
$ 
with the plane $\{ x_3=x_4=x_5=x_6=0 \}$, we see that 
$$
\sigma\bigl(-I,\rho_1(b_2),\rho_1(b_3)\bigr)=
\sigma\bigl(-I,\rho_2(b_2),\rho_2(b_3)\bigr),
$$ 
so by our previous results 
$$
\chi_{\rho_1}(b_2b_3)=\chi_{\rho_2}(b_2b_3).
$$ 
 
Now, let us fix $m_1,m_2\geq 0$ with $m_1+m_2>0$, and consider $n_1=n_2=1, \ n_3=m_1, \ n_4=m_2, \ n_5=n_6=0$. In each side of 
\eqref{sum_characters} there are characters of words 
in one of two forms:  
\begin{equation}\label{form}
r_1^{q_1}r_2^{t_1}b_2r_1^{q_2}r_2^{t_2}b_3r_1^{q_3}r_2^{t_3}, 
\text{\qquad or\qquad} 
r_1^{q_1}r_2^{t_1}b_3r_1^{q_2}r_2^{t_2}b_2r_1^{q_3}r_2^{t_3}, 
\end{equation} 
where $q_j,t_j\geq 0, \ q_1+q_2+q_3=m_1, \ t_1+t_2+t_3=m_2$ 
(we observe that the order of $r_1$ and $r_2$ does not 
matter since they commute). 
 
Using circular transformations shows that each word that appears in the second form in \eqref{form} is in the same conjugacy class as the first one, of course with different $q_j$ and $t_j$ which still sum up to $m_1$ and $m_2$ respectively. Thus, it suffices to show the equality of characters for words in the first form in \eqref{form}. 
 
We use circular transformation to get 
\begin{equation}\label{6}
\begin{split} 
r_1^{q_1}r_2^{t_1}b_2r_1^{q_2}r_2^{t_2}b_3r_1^{q_3}r_2^{t_3} &\sim b_2r_1^{q-2}r_2^{t_2}b_3r_1^{q_3}r_2^{t_3}r_1^{q_1}r_2^{t_1}         \\ 
 &= b_2r_1^{q_2}r_2^{t_2}b_3r_1^{q_1+q_3}r_2^{t_1+t_3}.  
\end{split}
\end{equation} 
By \eqref{br} $r_2b_3=b_3r_2^{-1}$, so  we can get from \eqref{6} 
\begin{equation*} 
r_1^{q_1}r_2^{t_1}b_2r_1^{q_2}r_2^{t_2}b_3r_1^{q_3}r_2^{t_3} \sim b_2r_1^{q_2}b_3r_1^{q_1+q_3}r_2^{t_1-t_2+t_3}. 
\end{equation*} 
Since by \eqref{br} $r_1$ and $b_3$ commute, we finally obtain 
\begin{equation}\label{8} 
r_1^{q_1}r_2^{t_1}b_2r_1^{q_2}r_2^{t_2}b_3r_1^{q_3}r_2^{t_3} \sim b_2b_3r_1^{q_1+q_2+q_3}r_2^{t_1-t_2+t_3}=b_2b_3r_1^{m_1}r_2^{t_1-t_2+t_3}. 
\end{equation} 
If $|t_1-t_2+t_3|<m_2$, by induction 
\begin{equation}\label{lower} 	\chi_{\rho_1}(r_1^{q_1}r_2^{t_1}b_2r_1^{q_2}r_2^{t_2}b_3r_1^{q_3}r_2^{t_3})=
\chi_{\rho_2}(r_1^{q_1}r_2^{t_1}b_2r_1^{q_2}r_2^{t_2}b_3r_1^{q_3}r_2^{t_3}). 
\end{equation} 
Of course, for non-negative $t_1,t_2,t_3$ 
\begin{equation}\label{irreducible}
t_1+t_2+t_3=|t_1-t_2+t_3| \ \Longrightarrow \ t_2=0, \ \mbox{or} \ t_1=t_3=0. 
\end{equation} 

We will now show that every term in \eqref{sum_characters} that are not conjugates of words in the form \eqref{form} with a smaller $m_1+m_2$ 
is in the same conjugacy class as $b_2b_3r_1^{m_1+m_2}$. Of course, 
this together with \eqref{sum_characters} and \eqref{lower} will imply 
that \eqref{main-equality} holds for $j=3$. 
 
By \eqref{irreducible} for such a word either $t-2=0$, that is the 
word is in the form 
\begin{equation}\label{first-form}
\begin{split}
r_1^{q_1}r_2^tb_2r_1^{q_2}b_3r_1^{q_3}r_2^{m_2-t} &=
r_1^{q_1}r_2^tb_2b_3r_1^{m_1-q_1}r_2^{m_2-t}  \\ 
&\sim b_2b_3 r_1^{m_1}r_2^{m_2},  
\end{split}
\end{equation} 
(here we used the fact that $b_3$ and $r_1$ commute and a circular transformation), or $t_1=t_3=0$, so the word is 
\begin{equation}\label{second-form}
\begin{split} 
r_1^{q_1}b_2r_1^{q_2}r_2^{m_2}b_3r_1^{q_3}
&=r_1^{q_1}b_2r_2^{m_2}r_1^{q_1}b_3r_1^{q_3} \\ 
&=r_1^{q_1}b_2r_2^{m_2}b_3r_1^{q_2+q_3}  \\ 
&\sim b_2r_2^{m_2}b_3r_1^{m_1},  
\end{split}
\end{equation} 
here we again we used the commuting of $b_3$ and $r_1$ and a circular transformation that moved $r_1^{q_1}$ to the end of the word. 
 
Now, by \eqref{br} $b_2r_2=r_1b_2$ and $r_2b_2=b_2r_1$, therefore, in \eqref{first-form} we have 
$$
b_2b_3r_1^{m_1}r2^{m_2}\sim r_2^{m_2}b_2b_3r_1^{m_1}=b_2r_1^{m_2}b_3r_1^{m_1}=b_2b_3r_1^{m_1+m_2}.
$$ 
For \eqref{second-form} we have 
$$
b_2r_2^{m_2}b_3r_1^{m_1}=r_1^{m_2}b_2b_3r_1^{m_1}\sim b_2b_3r_1^{m_1+m_2}.
$$ 
This finishes the proof of \eqref{main-equality} 
for $j=3$ and$m_1,m_2\geq 0$. The proofs in the cases 
when one of, or both $m_1$ and $m_2$ are negative are 
practically identical with the only difference being that for 
$m_1<0$  \ $r_1$ is replaced with $r_1^{-1}$, and $r_2$ is 
replaced with $r_2^{-1}$ when $m_2<0$. 
 
We now prove that \eqref{main-equality} holds for $j=4$. 
Again, for every pair of representations with the same joint spectrum of $b_2,b_3,r_1,r_2,r_1^{-1},r_2^{-1}$ the restrictions of these representations to the subgroup generated by $b_2$ and $b_3$ are unitary equivalent 
so 
$$
\chi_{\rho_1}(b_2b_3b_2b_3)=\chi_{\rho_2}(b_2b_3b_2b_3).
$$ 
Next we observe that relations \eqref{br} imply 
$$
b_2r_1b_2=r_2, \ b_2r_2b_2=r_1, \ b_2r_1^{-1}b_2=
r_2^{-1}, \ b_2r_2^{-1}b_2=r_1^{-1},
$$ 
and, therefore, 
\begin{equation}\label{15} 
b_2r_1^kb_2=r_2^k, \ b_2r_2^kb_2=r_1^k, \ k\in {\mathbb Z}. 
\end{equation} 
Relations \eqref{15} imply 
\begin{equation}\label{interchange} 
b_2b_3b_2b_3r_1^kr_2^l\sim b_2b_3b_2b_3r_1^lr_2^k, \ k,l\in {\mathbb Z}. 
\end{equation} 
Indeed, 
\begin{multline*} 
b_2b_3b_2b_3r_1^kr_2^l=b_2b_3b_2b_3b_2r_2^kb_2b_2r_1^lb_2
=b_2b_3b_2b_3b_2r_2^kr_1^lb_2 \\ 
=b_3b_2b_3r_2^kr_1^lb_2\sim b_2b_3b_2b_3r_1^lr_2^k. 
\end{multline*} 
We also remark that 
\begin{equation}\label{reduction} 
b_2b_3b_2b_3r_1^kr_2^l\sim b_2b_3b_2b_3r_1^{k-2}r_2^l
\end{equation} 
and, similarly 
\begin{eqnarray}\label{reduction1} 
b_2b_3b_2b_3r_1^kr_2^l\sim b_2b_3b_2b_3r_1^kr_2^{l-2}. 
\end{eqnarray} 
Verification of \eqref{reduction} goes as follows: 
\begin{align*} 
b_2b_3b_2b_3r_1^kr_2^l &= 
b_2b_3b_2b_3r_1^{k-1}r_2^lr_1 \\ 
&\sim r_1 b_2	b_3b_2b_3r_1^{k-1}r_2^l \\ 
&=b_2r_2b_3b_2b_3r_1^{k-1}r_2^l  \\
&=b_2b_3r_2^{-1}b_2b_3r_1^{k-1}r_2^l \\ 
&=b_2b_3b_2r_1^{-1}b_3r_1^{k-1}r_2^l \\ 
&= b_2	b_3b_2b_3r_1^{k-2}r_2^l. 
\end{align*} 
Relation \eqref{reduction1} is an immediate 
consequence of \eqref{reduction} and \eqref{interchange}. 
Now, \eqref{reduction} and \eqref{reduction1} show that 
if $|k|+|l|\geq 2$, and $(k,l)\neq (\pm 1,\pm 1)$, then 
\begin{equation}\label{induction} 
	b_2b_3b_2b_3r_1^kr_2^l\sim b_2b_3b_2b_3r_1^{k_1}r_2^{l_1} 
\mbox{ \ \ with} \ \  \ |k_1|+|l_1|<|k|+|l|.	 
\end{equation} 
Suppose that $m_1,m_2\geq 0$. Each term in \eqref{sum_characters} with $n_1=2,n_2=2,n_3=m_1,n_4=m_2,n_5=n_6=0$ is in the form 
\begin{equation}\label{form1}
r_1^{q_1}r_2^{t_1}b_{j_1}r_1^{q_2}r_2^{t_2}b_{j_2}
r_1^{q_3}r_2^{t3}b_{j_3}r_1^{q_4}r_2^{t_4}b_{j_4}r_1^{q_5}r_2^{t_5},
\end{equation} 
where $q_k,t_k\geq 0, \ \sum q_k=m_1, \ \sum t_k=m_2, \ j_k= \{ 2,3\}$ 
and there are two $b_2$ among $b_{j_k}$ and two $b_3$. Observe that, 
if for some $k=1,2,3$ \ $j_k=j_{k+1}$, then formulae \eqref{br} show 
that the word is a conjugate of either $w_3(l_1,l_2)$, or of 
$w_0(l_1,l_2)$ for some $l_1,l_2$. Since for such words we have 
already proven the equality of the characters, the sums of terms in \eqref{sum_characters} with $j_k\neq j_{k+1}$ on the left and on 
the right are the same. Each of such therms is in the form \eqref{form1} 
with $b_{j_1}=b_2, b_{j_2}=b_3, b_{j_3}=b_2, b_{j_4}=b_3$, or with 
$b_{j_1}=b_3, b_{j_2}=b_2,b_{j_3}=b_3,b_{j_4}=b_2$. Using circular 
transformations we easily see that the latter one is a conjugate 
of the former 
(possibly with different $q$-s and $t$-s). As it was mentioned above, 
relations \eqref{br} shows that each such term is a conjugate of 
$$b_2b_3b_2b_3r_1^kr_2^l$$ 
for some $k$ and $l$. 
 
A similar argument shows that the same conjugacy relation holds for arbitrary $m_1,m_2\in {\mathbb Z}$. 
 
Now we use an induction argument in $|m_1|+|m_2|$. 
We have already proven that \eqref{main-equality} holds for $w_4(0,0)$. 
Relation \eqref{induction} shows that it suffices to prove it for 
$w_4(\pm 1,0), w_4(0,\pm 1)$ and $w_4(\pm 1,\pm 1)$. 
 
Obviously, \eqref{interchange}--\eqref{reduction1} imply 
\begin{eqnarray} 
w_4(1,0)\sim w_4(0,1)\sim w_4(-1,0)\sim w_4(0,-1), \label{1,0} \\ 
w_4(1,1)\sim w_4(-1,1)\sim w_4(1,-1)\sim w_4(-1,-1). \label{1,1} 
\end{eqnarray} 
All terms in relation \eqref{sum_characters} with $n_1=2,n_2=2,n_3=1,n_4=n_5=n_5=0$ that are not conjugates 
of $w_3(k,l)$ or $w_0(k,l)$ are 
\begin{align*} 
r_1b_2b_3b_2b_3, \quad r_1b_3b_2b_3b_2, \\ 
b_2r_1b_3b_2b_3, \quad b_3r_1b_2b_3b_2, \\ 
b_2b_3r_1b_2b_3, \quad b_3b_2r_1b_3b_2, \\ 
b_2b_3b_2r_1b_3, \quad b_3b_2b_3r_1b_2, \\ 
b_2b_3b_2b_3r_1, \quad b_3b_2b_3b_2r_1.	 
\end{align*} 
It is easy to see using \eqref{br} that each of them is a conjugate of one of $w_4(\pm 1,0)$ or $w_4(0,\pm 1)$. By \eqref{1,0} all of them are in the same conjugate class and, therefore, \eqref{main-equality} implies 
$$
\chi_{\rho_1}\bigl(w_4(\pm 1,0)\bigr)=
\chi_{\rho_2}\bigl(w_4(\pm 1,0)\bigr), \ 
\chi_{\rho_1}\bigl(w_4(0,\pm 1)\bigr)=
\chi_{\rho_2}\bigl(w_4(0,\pm 1)\bigr).
$$ 
Similarly, we can show that each term in \eqref{sum_characters} 
corresponding to $n_1=2,n_2=2,n_3=n_4=1,n_5=n_6=0$ is either a conjugate 
of $w_4(0,0)$, or of $w_4(\pm 2,0)\sim w_4(0,0)$, or of 
$w_4(0, \pm 20\sim w_4(0,0)$, or $w_4(\pm 1,\pm 1)$. 
For the first three the equality of characters has been established. 
For the rest this equality follows from \eqref{main-equality} and 
\eqref{1,1}. 
\end{proof}

\end{document}